\title{Pinned distances of planar sets with low dimension}
\author{Jacob B. Fiedler}
\address{Department of Mathematics, University of Wisconsin, Madison, Wisconsin 53715}
\email{jbfiedler2@wisc.edu}
\thanks{The first author was supported in part by NSF DMS-2037851 and NSF DMS-2246906.}
\author{D. M. Stull}
\address{Department of Mathematics, University of Chicago, Chicago, IL 60637}
\email{dmstull@uchicago.edu}
\subjclass[2020]{28A78, 28A80, 68Q30}	
\newtheorem{thm}{Theorem}
\newtheorem{obs}[thm]{Observation}
\newtheorem{lem}[thm]{Lemma}
\newtheorem{prop}[thm]{Proposition}
\newtheorem{cor}[thm]{Corollary}
\newtheorem*{T1}{Theorem~\ref{thm:maintheorem}}
\newtheorem*{T4}{Theorem~\ref{thm:mainProjection}}
\theoremstyle{remark}
\newtheorem{conj}[thm]{Conjecture}
\DeclareMathOperator{\Dim}{Dim}
\DeclareMathOperator{\dimH}{dim_H}
\DeclareMathOperator{\dimP}{dim_P}
\newcommand{\R}{\mathbb{R}}
\newcommand{\N}{\mathbb{N}}
\newcommand{\Q}{\mathbb{Q}}
\newcommand{\ve}{\varepsilon}
\newcommand{\uhr}{{\upharpoonright}}
\begin{document}
\begin{abstract}
    In this paper, we give improved bounds on the Hausdorff dimension of pinned distance sets of planar sets with dimension strictly less than one. As the planar set becomes more regular (i.e., the Hausdorff and packing dimension become closer), our lower bound on the Hausdorff dimension of the pinned distance set improves. Additionally, we prove the existence of small \textit{universal sets} for pinned distances. In particular, we show that, if a Borel set $X\subseteq\mathbb{R}^2$ is weakly regular ($\dim_H(X) = \dim_P(X)$), and $\dim_H(X) > 1$, then 
    \begin{center}
        $\sup\limits_{x\in X}\dim_H(\Delta_x Y) = \min\{\dim_H(Y), 1\}$,
    \end{center}
    for every Borel set $Y\subseteq\mathbb{R}^2$. Furthermore, if $X$ is also compact and Ahlfors-David regular, then for every Borel set $Y\subseteq\mathbb{R}^2$, there exists some $x\in X$ such that
    \begin{center}
        $\dim_H(\Delta_x Y) = \min\{\dim_H(Y), 1\}$.
    \end{center}
\end{abstract}
\maketitle
\section{Introduction}
Let $E\subseteq\R^n$, and define the distance set of $E$ to be the set
\begin{center}
    $\Delta(E) := \{\vert x - y\vert \mid x,y\in E\}$.
\end{center}
A fundamental problem is to relate the sizes of $E$ and $\Delta(E)$; if $E$ is large, it should contain ``many'' different distances. For finite sets $E$, this is the Erd\H{o}s distinct distances problem, which was essentially solved in the plane by Guth and Katz \cite{GutKat15}. In the continuum setting, Falconer conjectured \cite{Falconer85} that, if $E$ is Borel and has Hausdorff dimension more than $n/2$, then $\Delta(E)$ has positive Lebesgue measure. Falconer's conjecture is still open in all dimensions $n\geq 2$. For $E$ no larger than the $n/2$ threshold, the distance sets do not necessarily have positive measure, but one still expects there to be some lower bound on the Hausdorff dimension of $\Delta(E)$. 

Working in the plane, we improve upon the best known lower bounds of this type when $.5<\dim_H(E)< 1$. In particular, we strengthen bounds on the size of ``pinned'' distance sets. If $E\subseteq \R^n$ and $x\in\R^n$, the pinned distance set of $E$ with respect to $x$ is
\begin{center}
    $\Delta_x(E) := \{\vert x - y\vert\mid y\in E\}$.
\end{center}
Note that if $x\in E$, then $\Delta_x(E)\subseteq \Delta(E)$, so appropriate lower bounds on the size of pinned distance sets imply the same lower bounds for distance sets. The dimension version of Falconer's conjecture for pinned distance sets is the following:
\begin{conj}\label{conj:FalconerPinned}
    Let $E\subseteq\R^n$ be a Borel set with $\dim_H(E) > n/2$. Then 
    \begin{center}
        $\sup\limits_{x\in E} \dim_H(\Delta_x(E)) = 1$.
    \end{center}
\end{conj}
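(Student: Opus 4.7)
The plan is to attack Conjecture \ref{conj:FalconerPinned} through the point-to-set principle of J.~Lutz and N.~Lutz, which translates the Hausdorff dimension of $\Delta_x(E)$ into pointwise effective (Kolmogorov-complexity) dimensions of individual distances $|x-y|$, taken relative to an arbitrary oracle $A$. Concretely, it suffices to show that for every oracle $A$, every $s < 1$, and every analytic $E \subseteq \R^n$ with $\dim_H(E) > n/2$, there is some $x \in E$ such that the set $\{\,y \in E : \dim^{A,x}(|x-y|) \geq s\,\}$ is nonempty; taking a suitable countable union of oracles then yields a single pin $x$ witnessing the conjecture.

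The next step would be to fix, using the point-to-set principle applied to $E$ itself, a point $y \in E$ whose effective dimension $\dim^A(y)$ is arbitrarily close to $\dim_H(E)$. The key geometric input is an effective lower bound on $\dim^{A,x}(|x-y|)$ in terms of $\dim^{A,x}(y)$ and the geometry of the radial projection $\pi_x \colon z \mapsto (z-x)/|z-x|$. One would invoke an effective Marstrand/Kaufman-type radial projection theorem (the kind of statement that underlies the main results of the paper) to conclude that, for a set of pins $x$ of full dimension,
\[
\dim^{A,x}(|x-y|) \geq \min\{\dim^A(y),\,1\} - \ve,
\]
up to additive error coming from the excess $\dim_P(E) - \dim_H(E)$ of $E$'s packing dimension over its Hausdorff dimension.

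The main obstacle is then to upgrade ``most $x$'' to ``some $x \in E$''. One must show that the set of bad pins has dimension strictly below $\dim_H(E)$, so that a good pin can be chosen from $E$ itself, while simultaneously arranging that $y \in E$ has effective dimension close to $\dim_H(E)$ relative to the oracle coding $x$. This is where the hypothesis $\dim_H(E) > n/2$ must be used essentially: the half-dimension threshold is exactly what guarantees enough ``room'' in $E$ so that, after excluding the bad pins for a given high-dimension $y$, a good pin still survives in $E$.

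This last step is precisely the source of the conjecture's difficulty. Existing techniques either lose a fixed amount of dimension in the radial projection (yielding only $\dim_H(\Delta_x(E)) \geq n/2$ or slightly better), or they require additional structural hypotheses on $E$ — weak regularity, Ahlfors–David regularity, or $\dim_H(E) > 1$ — of the kind exploited in the universal-set results stated in the abstract. I expect that a full proof would either require new sharp bounds on effective radial projections without any regularity assumption, or a delicate bootstrapping argument that uses partial distance bounds to improve the dimension of the set of admissible pins, iterated until the target $s < 1$ is reached.
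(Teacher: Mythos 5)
This statement is Falconer's pinned distance conjecture, which the paper presents as a \emph{conjecture} and explicitly calls beyond current techniques even for $n=2$. There is no proof of it in the paper to compare against, and your write-up does not claim to supply one either: you outline a plausible attack via the point-to-set principle and effective radial projections, and then correctly identify the sticking point — upgrading a ``most pins'' statement to ``some pin in $E$'' without a regularity hypothesis on $E$ or the assumption $\dim_H(E)>1$. That diagnosis matches the paper's own discussion of why its theorems stop short of the conjecture: the results here require either extra packing-dimension control (Theorem~\ref{thm:maintheorem}), weak regularity (Theorem~\ref{thm:mainWeakUniversalSet}), or AD-regularity together with $\dim_H>1$ (Theorem~\ref{thm:mainUniversalSet}).

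One caveat worth flagging in your sketch: the heuristic $\dim^{A,x}(|x-y|)\geq \min\{\dim^A(y),1\}-\ve$ ``up to additive error from $\dim_P(E)-\dim_H(E)$'' is optimistic as stated. The known effective radial projection bounds (including Theorem~\ref{thm:mainProjection} here, building on Orponen--Shmerkin--Wang) lose a genuine fraction of the dimension even when $\dim_P=\dim_H$, unless one additionally has strong pointwise regularity of the pin as in the AD-regular case. So the ``additive error'' framing understates the loss, and this is precisely why the paper's low-dimension bounds fall below $\min\{\dim_H(E),1\}$. Since this is a conjecture rather than a theorem of the paper, there is nothing further to verify; your answer is an honest and essentially accurate description of why the problem is open.
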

Although a full resolution of Conjecture \ref{conj:FalconerPinned} seems to be beyond current techniques, substantial progress has been made on this question in the past few years \cite{DuZha19,GutIosOuWang20, DuGuOuWanWilZha21,  DuIosOuWanZha21, DuOuRenZhang23a}. Moreover, recent work has proven strong lower bounds on the dimension of pinned distance sets in the plane \cite{Shmerkin17, Shmerkin20,KelShm19,Stull22c}. If $E\subseteq \R^2$ is analytic (i.e. the continuous image of a Borel set), and $d:=\dim_H(E) > 1$, the current records are
$$\sup\limits_{x\in E} \dim_H(\Delta_x (E)) \geq
\begin{cases}
  \min\{\frac{4}{3}d - \frac{2}{3},1\} &\text{Liu \cite{Liu20}}\\
  \min\{\frac{d(4-d)}{5-d},1\} &\text{Fiedler and Stull \cite{FieStu23}}
\end{cases}$$
However, the techniques in both \cite{Liu20} and \cite{FieStu23} rely crucially on the assumption that $\dim_H(E) > 1$. In this paper, we establish improved bounds on the dimension of the pinned distance sets in the absence of this assumption. Let $\dim_P(E)$ denote the packing dimension of $E$ (another notion of fractal dimension that is dual to Hausdorff dimension and always satisfies $\dim_P(E)\geq \dim_H(E)$). Our main theorem is the following:
\begin{thm}\label{thm:maintheorem}
    Suppose $E\subseteq\R^2$ is analytic, $d:= \dim_H(E) \leq 1,$ and $D:= \dim_P(E)$. Then, 
\begin{center}
    $\sup\limits_{x\in E}\dim_H(\Delta_x(E))\geq d \left(1 - \frac{\alpha D - d(D+\alpha -d)}{(d+1)(\alpha D - d^2) -d^2(\alpha + D - 2 d)}\right)$,
\end{center}
where $\alpha = \min\{1 + d, D\}$.
\end{thm}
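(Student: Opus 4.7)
The plan is to use the effective (algorithmic) dimension framework together with the point-to-set principle to reduce the classical statement to a pointwise Kolmogorov complexity estimate. I would fix an oracle $A$ relative to which $\dim^A_H(E) = d$ and $\dim^A_P(E) = D$, and show that there exist a pin $x \in E$ and a subset $E' \subseteq E$ of effective Hausdorff dimension at least $d - \ve$ (relative to $(A,x)$) such that for every $y \in E'$ and every sufficiently large precision $r$, the complexity $K^{A,x}_r(|x-y|)$ is bounded below by the desired multiple of $r$; applying the point-to-set principle to $\Delta_x(E)$ then yields the theorem.

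For the pin selection, following the template developed in \cite{FieStu23} and \cite{Stull22c}, I would choose $x \in E$ whose complexity profile $K^A_s(x)$ is both $\geq ds$ at the ambient scale and $\leq Ds + o(s)$ at every intermediate scale $s$; the existence of such $x$ is guaranteed by the two-sided dimension hypotheses on $E$. For a generic $y \in E$ with a similar two-sided profile, the distance $|x-y|$ behaves as a radial projection of $y$ centered at $x$, so $K^{A,x}_r(|x-y|)$ can be analyzed using the effective projection machinery of Lutz--Stull. Lower bounds for the complexity of projections along typical directions are then sharpened using the packing-dimension control, which restricts how strongly $y$ can concentrate along lines through $x$.

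The final bound emerges by optimizing over a transition scale at which the complexity of $y$ interpolates between the Hausdorff regime ($K^A_s(y) \approx ds$) and the packing upper bound ($K^A_s(y) \leq Ds$). The appearance of $\alpha = \min\{1+d, D\}$ reflects a Kaufman-type barrier: when $D \geq 1+d$, the set of directions from $x$ already saturates the "good" dimension $1+d$, so the packing enhancement is capped there; when $D < 1+d$, the bound scales linearly with $D$. Setting $D = d$ should collapse the numerator $\alpha D - d(D+\alpha - d)$ to zero, recovering the Shmerkin--Wang equality in the weakly regular case, which serves as a useful sanity check.

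The main obstacle will be the multi-scale bookkeeping: one has to control simultaneously (i) the complexity lower bound for the direction $e = (y-x)/|y-x|$ at all relevant precisions, (ii) the complexity of $y$ at scales below $r$ under both $d$-type and $D$-type constraints, and (iii) the contribution of directions $e$ that cluster near an exceptional set whose size is governed by $D$. The resulting linear program in Kolmogorov complexities must be solved exactly to produce the specific rational function of the theorem. I expect the technical crux to lie in making optimal use of the packing dimension $D$ to bound bad directions---a genuine refinement over \cite{ShmWang21}, which does not exploit this gap---and in the careful algebraic optimization that yields the precise numerator $\alpha D - d(D+\alpha - d)$ and quadratic denominator.
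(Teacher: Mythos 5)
Your high-level framework is correct and tracks the paper's strategy closely: effective dimension via the point-to-set principle, a pin $x$ with a controlled complexity profile, analysis of $|x-y|$ as a radial projection via Lutz--Stull-style machinery, and a multiscale optimization that produces the rational function; your sanity check that $D=d$ forces the numerator $\alpha D - d(D+\alpha-d)$ to vanish is also correct. However, there is a genuine gap at the most delicate step. You never engage with how one produces a pair $(x,y)\in E\times E$ whose direction $e=\frac{y-x}{|x-y|}$ has effective dimension (relative to $(A,x)$) close to $d$ at all relevant precisions. In the $d>1$ regime of \cite{FieStu23} and \cite{Stull22c} this comes essentially for free from a ``many directions'' argument, but that argument breaks down when $\dim_H(E)\leq 1$. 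The paper's fix---and the thing it flags as the central new ingredient---is the Orponen--Shmerkin--Wang radial projection theorem (the $\sigma$-thin-tubes result, Corollary 2.21 of \cite{OrpShmWan22}), which converts the classical measure-theoretic statement about radial projections of low-dimensional planar sets into exactly the pointwise direction-complexity estimate (conditions (C1)--(C4)) that the effective machinery needs. Without citing or replacing this input, your proposal has no way to get the conditional direction complexity off the ground when $d\leq 1$.

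Your item (iii) also points to a misunderstanding about where the packing dimension enters. You suggest $D$ governs the size of an exceptional set of bad directions. In fact $D$ plays no role in the direction selection: the thin-tubes step is purely a Frostman/$\dim_H$ phenomenon, and the exceptional-direction control is governed by $s<d$. The packing dimension enters entirely through the complexity profiles of $x$ and $y$: the projection theorem (Theorem 3.1) and the teal-interval analysis (Lemmas 4.8--4.11) use $\Dim^A(x),\Dim^A(y)\leq D$ to bound how long a $\sigma$-green plateau can persist before red growth must resume, and $\alpha=\min\{D,1+\sigma\}$ arises there as a cap on the per-unit red contribution, not as a Kaufman-type direction barrier. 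A smaller technical point: the reduction does not produce a subset $E'$ of near-full effective dimension; it produces a single good $y$, and the lower bound on $\dim_H(\Delta_x Y)$ then follows because $Y$ is chosen effectively compact relative to $A$, making $(A,x)$ a Hausdorff oracle for $\Delta_x Y$ in the point-to-set principle.
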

Note that the bound improves as $E$ becomes more regular, i.e., as the packing dimension of $E$ approaches its Hausdorff dimension. Hence, setting $D=2$ gives the following corollary.
\begin{cor}\label{cor:mainCorollary}
    Suppose $E\subseteq\R^2$ is analytic and $d:= \dim_H(E) \leq 1$. Then, 
\begin{center}
    $\sup\limits_{x\in E}\dim_H(\Delta_x(E))\geq d \left(1 - \frac{2-d}{2(1+2d-d^2)}\right)$.
\end{center}
\end{cor}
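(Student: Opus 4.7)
The corollary is the specialization of Theorem \ref{thm:maintheorem} to the worst case $D = 2$. Since every set $E \subseteq \R^2$ satisfies $\dim_P(E) \leq 2$, my plan is to verify that the lower bound in Theorem \ref{thm:maintheorem}, viewed as a function of $D$ for fixed $d \in (0,1]$, is monotonically non-increasing. Once this monotonicity is established, substituting $D = 2$ produces a valid (possibly weaker) lower bound that holds for every analytic $E \subseteq \R^2$ with $\dim_H(E) = d \leq 1$, and one only needs to check that this substitution reproduces the formula in the corollary.

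For $d \leq 1$ we have $\alpha = \min\{1+d, D\}$, so there are two regimes to treat. In the regime $D \geq 1 + d$ (so $\alpha = 1+d$), I would expand $\alpha D - d(D + \alpha - d) = D - d$ and $(d+1)(\alpha D - d^2) - d^2(\alpha + D - 2d) = D(1 + 2d) - 2d^2$, reducing the bound to $d\bigl(1 - \tfrac{D - d}{D(1 + 2d) - 2d^2}\bigr)$. Differentiating the fraction with respect to $D$ gives $\tfrac{d}{(D(1+2d) - 2d^2)^2} > 0$, so the fraction is increasing in $D$ and the bound is decreasing.

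In the regime $d \leq D \leq 1 + d$ (so $\alpha = D$), an analogous simplification should yield $d\bigl(1 - \tfrac{D - d}{(1+d) D + d(1-d)}\bigr)$; differentiating the fraction in $D$ gives $\tfrac{2d}{((1+d) D + d(1-d))^2} > 0$, again showing the bound is decreasing in $D$. The two simplified forms agree at $D = 1 + d$, so together they describe a continuous, non-increasing function of $D$ on $[d, 2]$.

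Finally, evaluating the first simplified form at $D = 2$ (which lies in the regime $D \geq 1 + d$ since $d \leq 1$) gives exactly $\tfrac{2 - d}{2(1 + 2d - d^2)}$, matching the corollary. Applying Theorem \ref{thm:maintheorem} with the true value of $D = \dim_P(E)$ and using monotonicity to replace $D$ by $2$ then completes the proof. The only substantive step is the monotonicity verification, which is routine algebra; there is no genuine obstacle, since the corollary is by design the worst-case specialization of the main theorem.
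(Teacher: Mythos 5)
Your proposal is correct and matches the paper's own proof, which simply asserts that the bound is decreasing in $D$ and then substitutes $D = 2$ (so that $\alpha = 1 + d$). You have merely filled in the routine algebra — simplifying the expression in each of the two regimes $\alpha = 1+d$ and $\alpha = D$ and differentiating — that the paper describes as "easy to verify."
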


Until recently, less progress had been made on the study of pinned distance sets in this setting than above the $n/2$ threshold. A fundamental issue had been the need for improved bounds on the size of radial projections for small sets. Shmerkin and Wang gave the first strong progress on this problem, proving \cite{ShmWang21} that, for any analytic set $E\subseteq \R^2$ with $d:= \dim_H(E)\leq 1$,
\begin{equation}\label{eq:ShmWangBound}
    \sup\limits_{x\in E} \dim_H(\Delta_x E) \geq \frac{d}{2} + \frac{d^2}{2(2+\sqrt{d^2 +4})}.
\end{equation}
Moreover, they proved that if $E$ is weakly regular, i.e., $\dim_H(E) = \dim_P(E)$, then $\sup\limits_{x\in E} \dim_H(\Delta_x E) = d$. A key ingredient of Shmerkin and Wang's proof was a result on radial projections of low-dimensional sets which was then improved in \cite{OrpShmWan22}. In \cite{DuOuRenZhang23a}, Du, Ou, Ren and Zhang proved that
\begin{equation}\label{eq:DuOuRenZhaBound}
    \sup\limits_{x\in E} \dim_H(\Delta_x E) \geq \frac{5}{3}d -1,
\end{equation}
which exceeds (\ref{eq:ShmWangBound}) when $d$ is sufficiently close to one. Corollary \ref{cor:mainCorollary} improves on both of these bounds provided that $d$ is greater than about $.477$. Furthermore, the lower bound is $3/4$ when $d=1$, which agrees with the lower bound in \cite{FieStu23} and eliminates the discontinuity that had existed at this threshold value.

The proof of our main theorem proceeds via ``effective'' methods. In particular, we study the algorithmic information content of points as well as the distances and directions between them. Appropriate bounds on the complexity of distances $\vert x - y\vert$ imply bounds on the size of the pinned distance set $\Delta_x(E)$ via the point-to-set principle of Lutz and Lutz \cite{LutLut17}. The proof broadly follows a similar framework to the authors' previous paper \cite{FieStu23}. However, the assumption that $\dim_H(E) \leq 1$ necessitates new techniques relative to previous work. 

One of the most crucial differences is the use of the recent breakthrough by Orponen, Shmerkin and Wang \cite{OrpShmWan22} on radial projections for low-dimensional sets. In the language of effective dimension, this allows us to find $x,y\in E$ whose direction $e:=\frac{y-x}{\vert x -y\vert}$ is sufficiently complex. Making use of this theorem requires a new reduction, i.e. a proof that the main theorem follows from its effective analog. Additionally, versions of many of the algorithmic tools of \cite{FieStu23} need to be established in this setting, and a careful usage of these tools requires leveraging certain bounds only relevant when the information content of objects is low, as in the case of small sets.

\subsection{Universal sets for pinned distances}

The next main theorems of this paper deal with the concept of \textit{universal sets}. Let $X\subseteq\R^2$, and let $\mathcal{C}$ be a class of subsets of $\R^2$ e.g. Borel sets, analytic sets, or weakly regular sets. We say that $X$ is \textit{weakly universal for pinned distances} for $\mathcal{C}$ if, for every $Y\in \mathcal{C}$ 
\begin{equation}
     \sup\limits_{x\in X}\dim_H(\Delta_x (Y)) = \min\{1, \dim_H(Y)\}.
\end{equation}
Note that the right hand side is as large as the pinned distance set can possibly be in the sense of Hausdorff dimension. We can strengthen this notion by requiring the existence of specific points in $X$ achieving the desired bound. Hence, we say that $X$ is \textit{universal for pinned distances} for $\mathcal{C}$ if, for every $Y\in \mathcal{C}$ there exists some $x\in X$ such that
\begin{equation}
     \dim_H(\Delta_x (Y)) = \min\{\dim_H(Y), 1\}
\end{equation}
It is not obvious that, for a non-trivial class $\mathcal{C}$, small universal sets exist. The final section of this paper establishes the existence of such sets. For weakly universal sets, we have the following:
\begin{thm}\label{thm:mainWeakUniversalSet}
    Let $X\subseteq\R^2$ be a Borel set with $\dim_H(X) = \dim_P(X)$. Then $X$ is weakly universal for pinned distances for the class of Borel sets $Y\subseteq\R^2$ satisfying $\dim_H(Y) < \dim_H(X)$.
\end{thm}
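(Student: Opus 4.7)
The upper bound is immediate: $\Delta_x Y$ is a $1$-Lipschitz image of $Y$ under $y \mapsto |x-y|$, so $\dim_H(\Delta_x Y) \leq \min\{\dim_H(Y),1\}$ for every $x$. For the lower bound, the plan is to use effective dimension and the point-to-set principle of J.~Lutz and N.~Lutz, following the approach of \cite{FieStu23}. Write $d_X = \dim_H(X) = \dim_P(X)$ and $d_Y = \dim_H(Y)$, so $d_Y < d_X$ by hypothesis. Fix $\varepsilon > 0$; it suffices to produce $x^* \in X$ with $\dim_H(\Delta_{x^*}(Y)) \geq \min\{d_Y,1\} - O(\varepsilon)$.

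The first step is to select $x^*$ which is ``effectively regular.'' Using the joint form of the point-to-set principle for Hausdorff and packing dimensions together with the weak regularity hypothesis $\dim_H(X) = \dim_P(X) = d_X$, fix an oracle $A_0$ and a point $x^* \in X$ such that $\dim^{A_0}(x^*) \geq d_X - \varepsilon$ and $\Dim^{A_0}(x^*) \leq d_X$; such a matched-dimension point exists in $X$ precisely because the Hausdorff and packing dimensions of $X$ agree. By the point-to-set principle applied to the set $\Delta_{x^*}(Y) \subseteq \mathbb{R}$, it then remains to show that for \emph{every} oracle $A$ there is $y^* \in Y$ with $\dim^A(|x^* - y^*|) \geq \min\{d_Y,1\} - O(\varepsilon)$. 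So fix such an $A$, set $B = A \oplus A_0$, and apply the point-to-set principle to $Y$ relative to $B \oplus x^*$ to produce $y^* \in Y$ with $\dim^{B,x^*}(y^*) \geq d_Y - \varepsilon$.

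To finish, I would invoke an effective distance lemma, a variant of the estimates in \cite{FieStu23}: relative to $B$, the point $x^*$ retains nearly matched effective Hausdorff and packing dimensions equal to $d_X$, while $\dim^{B,x^*}(y^*) \geq d_Y - \varepsilon$. The lemma then yields
\[
\dim^B(|x^* - y^*|) \;\geq\; \min\{1,d_Y\} - O(\varepsilon),
\]
and since adding oracles can only decrease effective dimension, $\dim^A(|x^* - y^*|) \geq \dim^B(|x^* - y^*|) \geq \min\{1,d_Y\} - O(\varepsilon)$. Sending $\varepsilon \to 0$ gives $\sup_{x \in X} \dim_H(\Delta_x Y) \geq \min\{d_Y,1\}$, which combined with the upper bound completes the proof.

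The main obstacle is verifying (or adapting from previous work) the effective distance lemma in the required form, namely with $x^*$ having matched effective Hausdorff and packing dimensions equal to $d_X$ strictly above the conditional effective dimension of $y^*$. The weak regularity hypothesis is what produces a point with matched effective dimensions, and the strict inequality $d_Y < d_X$ is essential to guarantee that the ``regularity'' of $x^*$ persists under relativization to the test oracle $A$ and the auxiliary point $y^*$ without collapsing the distance estimate below $\min\{d_Y,1\}$.
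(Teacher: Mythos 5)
Your upper bound observation and the general plan of attacking via effective dimension are sound, and your use of the joint point-to-set principle to find $x^*\in X$ with nearly matched $\dim^{A_0}(x^*)$ and $\Dim^{A_0}(x^*)$ is the right first move. But there is a genuine gap in the step where you fix $x^*$ and then quantify over \emph{all} oracles $A$ in the point-to-set principle for $\Delta_{x^*}(Y)$. Your claim that ``relative to $B=A\oplus A_0$, the point $x^*$ retains nearly matched effective Hausdorff and packing dimensions equal to $d_X$'' is not justified and is false in general: a sufficiently powerful oracle $A$ (for instance one encoding the binary expansion of $x^*$) forces $\dim^B(x^*)=0$, at which point any effective distance lemma you might invoke gives nothing. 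Regularity of a fixed point simply does not persist under passage to an arbitrary oracle; the argument as written cannot be salvaged by choosing $x^*$ more carefully up front.

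The paper avoids this by not minimizing over all oracles at all. It restricts $Y$ to a compact subset $Y_1$, lets $A$ be the join of (i) an oracle making $Y_1$ effectively compact, (ii) a packing oracle for $X$, and (iii) an oracle computing the Frostman measure $\nu$ on $Y_1$. The Hitchcock--Lutz effective compactness result then guarantees $(A,x)$ is a Hausdorff oracle for $\Delta_x Y_1$, so it suffices to exhibit a \emph{single} pair $(x,y)$ with $\dim^{A,x}(|x-y|)$ large relative to this specific $A$, rather than for all oracles. The near-regularity of $x$ is then obtained not by choosing one magical $x^*$, but by taking a compact $X_1\subseteq X$ with $\dim_H(X_1)>\dim_P(X)-\delta$ and using the packing oracle, so that typical points of $X_1$ automatically have $\Dim^A(x)-\dim^A(x)<\delta$. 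Your proposal also omits the essential geometric ingredient: to apply the effective distance estimate (Theorem~\ref{thm:mainThmEffDim2}) one needs conditions (C2) and (C4), which require the direction $e=\frac{y-x}{|x-y|}$ to have complexity $\gtrsim\sigma r$; these are established via the Orponen--Shmerkin--Wang thin-tubes theorem (Lemma~\ref{lem:C1-C3}), not by a generic relativization argument. Without that input, the ``effective distance lemma'' you invoke does not hold in the form you need.
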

Unpacking the definitions, this theorem shows that there is a \textit{fixed} set $X \subseteq\R^2$ such that, for every $Y$ with $\dim_H(Y) < \dim_H(X)$, $\sup\limits_{x\in X} \dim_H(\Delta_x (Y))$ is maximal. This can be thought of as strengthening the result in \cite{ShmWang21} that if $Y$ is weakly regular, then it has pinned distance sets of the maximum possible size. We are able to obtain the same conclusion using only regularity in the pin set $X$. Moreover, combined with a previous theorem of the authors (Theorem 7 in \cite{FieStu23}, which handles the case that $Y$ is large), we have the following corollary.
\begin{cor}\label{cor:mainCorollaryUniversal}
    Let $X\subseteq\R^2$ be a Borel set with $\dim_H(X) = \dim_P(X) > 1$. Then $X$ is weakly universal for pinned distances for the class of Borel sets $Y\subseteq\R^2$.
\end{cor}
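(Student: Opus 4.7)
The plan is to prove the equality $\sup_{x\in X}\dim_H(\Delta_x Y)=\min\{1,\dim_H(Y)\}$ by establishing the two inequalities separately. The upper bound is immediate: for each $x\in\R^2$ the map $y\mapsto \vert x-y\vert$ is Lipschitz into $\R$, so $\dim_H(\Delta_x Y)\leq \min\{1,\dim_H(Y)\}$ for every $x$, and hence $\sup_{x\in X}\dim_H(\Delta_x Y)\leq \min\{1,\dim_H(Y)\}$.

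For the lower bound, I would split on the size of $\dim_H(Y)$. If $\dim_H(Y)<\dim_H(X)$, then Theorem~\ref{thm:mainWeakUniversalSet} applies directly to $Y$ and yields the matching inequality. The remaining case is $\dim_H(Y)\geq \dim_H(X)>1$, where $\min\{1,\dim_H(Y)\}=1$ and it suffices to show $\sup_{x\in X}\dim_H(\Delta_x Y)\geq 1$. My first approach would be to reduce to Theorem~\ref{thm:mainWeakUniversalSet} by approximating $Y$ from below by Borel subsets of strictly smaller dimension. For any $\varepsilon\in(0,1)$, the classical result that an analytic set of Hausdorff dimension strictly greater than $s$ contains a compact subset of Hausdorff dimension exactly $s$ (see Mattila's book) produces a Borel subset $Y_\varepsilon\subseteq Y$ with $\dim_H(Y_\varepsilon)=1-\varepsilon<\dim_H(X)$. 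Theorem~\ref{thm:mainWeakUniversalSet} then yields $\sup_{x\in X}\dim_H(\Delta_x Y_\varepsilon)=1-\varepsilon$, and since $\Delta_x Y_\varepsilon\subseteq \Delta_x Y$ for every $x$, we obtain $\sup_{x\in X}\dim_H(\Delta_x Y)\geq 1-\varepsilon$; letting $\varepsilon\to 0$ finishes the bound.

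Alternatively, as suggested in the introduction, the case $\dim_H(Y)>1$ can be dispatched by invoking the prior result of the authors in \cite{FieStu23}, which provides lower bounds on pinned distance sets with pins taken from sets of dimension exceeding one, bypassing the subset-approximation step. Either route works and is essentially a formal deduction. I do not expect any substantive obstacle: the creative content is carried entirely by Theorem~\ref{thm:mainWeakUniversalSet} (and the earlier result in \cite{FieStu23}), and Corollary~\ref{cor:mainCorollaryUniversal} just patches the two dimension regimes together. The only mild subtlety is confirming that the approximating subsets $Y_\varepsilon$ may be taken to be Borel and thus fall within the scope of Theorem~\ref{thm:mainWeakUniversalSet}, but this is automatic from the standard Davies/Mattila construction, which produces compact subsets of the prescribed dimension.
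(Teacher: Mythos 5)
Your proof is correct. The paper's own deduction combines Theorem~\ref{thm:mainWeakUniversalSet} (covering $\dim_H(Y) < \dim_H(X)$) with the prior Theorem~\ref{thm:priorUniversalSets} from \cite{FieStu23} (covering $\dim_H(Y) > 1$, which includes the remaining range $\dim_H(Y) \geq \dim_H(X) > 1$); this is precisely the second route you sketch. Your primary route --- extracting compact Borel $Y_\varepsilon \subseteq Y$ with $\dim_H(Y_\varepsilon) = 1-\varepsilon < \dim_H(X)$ via the Davies subset theorem, applying Theorem~\ref{thm:mainWeakUniversalSet} to each $Y_\varepsilon$, and using monotonicity of $\Delta_x$ together with a limit in $\varepsilon$ --- is a valid alternative that bypasses Theorem~\ref{thm:priorUniversalSets} altogether, making the corollary rest on Theorem~\ref{thm:mainWeakUniversalSet} alone (plus the trivial Lipschitz upper bound). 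In fact the $\varepsilon$-limit is not strictly necessary: since $\dim_H(X) > 1$ and $\dim_H(Y) > 1$ in the remaining case, one can take a single compact $Y_1 \subseteq Y$ with $\dim_H(Y_1) = 1 < \dim_H(X)$ and apply Theorem~\ref{thm:mainWeakUniversalSet} once, giving $\sup_{x\in X}\dim_H(\Delta_x Y) \geq \sup_{x\in X}\dim_H(\Delta_x Y_1) = 1$ directly. Either way, both your routes are sound; the paper simply opts for the one that reuses the earlier result.
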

We are able to improve upon this corollary if we further require that $X$ is compact and AD-regular. 
\begin{thm}\label{thm:mainUniversalSet}
    Let $X\subseteq\R^2$ be a compact, Ahlfors-David regular set such that $\dim_H(X)>1$. Then $X$ is universal for pinned distances for the class of Borel sets $Y\subseteq\R^2$.
\end{thm}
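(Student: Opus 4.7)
The plan is to upgrade the supremum in Corollary~\ref{cor:mainCorollaryUniversal} to an attained maximum, by localizing its effective/pointwise proof to the natural AD-regular measure on $X$. Let $s_X = \dim_H(X) > 1$, fix an $s_X$-AD-regular Borel probability measure $\mu$ with $\mathrm{supp}(\mu) = X$ (which exists because $X$ is compact and AD-regular), fix a Borel set $Y \subseteq \R^2$, and set $s = \min\{\dim_H(Y), 1\}$. Any $s_X$-AD-regular measure is comparable to $\mathcal{H}^{s_X}\!\uhr X$, so every subset of $X$ of Hausdorff dimension strictly less than $s_X$ is $\mu$-null.

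The strategy is to show that, for every $\varepsilon > 0$,
$$F_\varepsilon = \{x \in X : \dim_H(\Delta_x Y) < s - \varepsilon\}$$
is $\mu$-null. Granting this, $\bigcup_{n \geq 1} F_{1/n}$ is $\mu$-null, so $X \setminus \bigcup_n F_{1/n}$ has full $\mu$-measure and in particular is nonempty; any point $x$ in this set satisfies $\dim_H(\Delta_x Y) \geq s$, and the reverse inequality is automatic (the map $y \mapsto |x-y|$ is Lipschitz and $\Delta_x Y \subseteq \R$), yielding the desired equality.

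To obtain $\mu(F_\varepsilon) = 0$, I would revisit the effective-dimension proof of the supremum statement underlying Corollary~\ref{cor:mainCorollaryUniversal}, which rests ultimately on the methods of \cite{FieStu23} and the radial projection bounds of \cite{OrpShmWan22}. Those arguments are really pointwise: after fixing an oracle $A$ encoding a Frostman measure on $Y$ together with the auxiliary projection data, one obtains $\dim_H(\Delta_x Y) \geq s - \varepsilon$ whenever the $A$-relative effective dimension of $x$ exceeds a threshold $\tau = \tau(\varepsilon) < s_X$. By the point-to-set principle, the set of $x \in \R^2$ of $A$-effective dimension at most $\tau$ has Hausdorff dimension at most $\tau$, and this forces $\dim_H(F_\varepsilon) \leq \tau < s_X$, hence $\mu(F_\varepsilon) = 0$.

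The main obstacle is verifying that the threshold $\tau(\varepsilon)$ can be chosen strictly below $s_X$ \emph{uniformly} in $x$, rather than requiring $x$-dependent oracle choices. The strict inequality $s_X > 1$ gives room to absorb the loss from the radial-projection step of \cite{OrpShmWan22}, so I expect the argument to go through with careful accounting: every non-pointwise selection (the Frostman measure for $Y$, the choice of projection oracle, and the Frostman data for $\mu$ on $X$) must be bundled into $A$ once and for all, leaving a clean pointwise inequality in $x$ to which the AD-regularity of $\mu$ can be applied.
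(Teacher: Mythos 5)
Your high-level plan -- show the exceptional set $F_\varepsilon$ is $\mu$-null, then pick a generic $x$ -- is in the spirit of what the paper does, but the step that is supposed to deliver $\mu(F_\varepsilon)=0$ does not go through the way you have set it up, and there are two concrete obstructions.

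First, your claim that (after fixing a single oracle $A$) one has $\dim_H(\Delta_x Y)\geq s-\varepsilon$ whenever $\dim^A(x)>\tau(\varepsilon)$ is not what the paper's effective theorems give. Theorems~\ref{thm:mainThmEffDim2} and \ref{thm:mainThmEffDim3} require both (a) a \emph{regularity} hypothesis on $x$ (that $\Dim^A(x)-\dim^A(x)$ be small, indeed logarithmically small in the AD-regular case), not merely a lower bound on $\dim^A(x)$, and (b) the \emph{existence} of a companion $y\in Y$ for which $e=\frac{y-x}{|x-y|}$ has the required complexity growth (conditions (C2)--(C4)). The point-to-set principle controls the Hausdorff dimension of $\{x:\dim^A(x)\leq\tau\}$, but it says nothing about the packing--Hausdorff gap for the remaining $x$, so it cannot certify hypothesis (a) for a dimension-$<s_X$ complement. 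And for (b), the radial projection input (the thin-tubes corollary of Orponen--Shmerkin--Wang) produces the good $y$'s only for a $\mu$-\emph{positive-measure} set of $x$'s, not for every $x$ above a dimension threshold. So $\dim_H(F_\varepsilon)\leq\tau<s_X$ is not established, and without it the conclusion $\mu(F_\varepsilon)=0$ is not obtained.

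The paper's proof handles both issues head-on, and the actual role of AD-regularity is different from what you anticipated. Lemma~\ref{lem:ADRegularOraclesExist} shows that, for an AD-regular set, there is an oracle $A$ making $\mu$-a.e.\ $x$ satisfy $K_r^A(x)=d_x r+O(\log r)$; this is what supplies the regularity hypothesis (C5) on a full-$\mu$-measure set $X_1$, replacing your dimension-threshold reasoning. For the companion points, the paper exhausts $Y$ by compacta $Y_i$, applies Corollary 2.18 of \cite{OrpShmWan22} to get $(\sigma_i',K_i,1-2^{-2i})$-thin tubes for $(\mu,\nu_i)$, and uses the fact that the failure constants $2^{-2i}$ sum to less than $1$ to intersect over all $i$ and still find a single $x\in X_1$ (of positive $\mu$-measure) for which a suitable $y\in Y_i$ exists for every $i$; this is the step that makes the lower bound attained at one pin rather than merely approached in supremum. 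Your proposal is missing this ``push the thin-tubes constant to $1$ and intersect'' device, and substitutes for it a dimension estimate on $F_\varepsilon$ that the available tools do not supply.
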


For instance, if we take any self-similar four-corner Cantor set $C$ of dimension strictly greater than $1$, then for every Borel set $Y\subseteq\R^2$, there exists some $x\in C$ such that $$\dim_H(\Delta_x Y) = \min\{\dim_H(Y), 1\}.$$

 Considering again the ``unpinned'' distance set problem gives another interpretation of Theorem \ref{thm:mainUniversalSet}. Sets such as $C$ have the property that for any Borel $Y$, one can choose $x\in C$ such that 
 $$\dim_H(\Delta(Y\cup \{x\}))\geq \min\{\dim_H(Y), 1\}$$
 
 \noindent In particular, (the dimension version of) Falconer's conjecture is true in the plane for \emph{any} Borel $Y$, provided that we are allowed to add one point drawn from a small fixed set to $Y$.

\section{Preliminaries}
 
\subsection{Kolmogorov complexity and effective dimension}
We use the algorithmic, or effective, techniques established in \cite{Lutz03a, AthHitLutMay07}. For a more detailed overview of these ideas, and their uses in (classical) geometric measure theory, see, e.g., \cite{LutLutMay22, Lutz17, LutStu18, LutStu20,  Stull22a}.  

The \emph{Kolmogorov complexity} of a point $x\in\R^m$ at \emph{precision} $r\in\N$ is the length of the shortest program $\pi$ that outputs a \emph{precision-$r$} rational estimate for $x$. Formally, this is 
\[K_r(x)=\min\left\{K(p)\,:\,p\in B_{2^{-r}}(x)\cap\Q^m\right\}\,,\]
where $B_{\ve}(x)$ denotes the open ball of radius $\ve$ centered on $x$. Note that this implies that the Kolmogorov complexity of a point is non-decreasing in precision. The \emph{conditional Kolmogorov complexity} of $x$ at precision $r$ given $y\in\R^n$ at precision $s\in\R^n$ is
\[K_{r,s}(x\mid y)=\max\big\{\min\{K_r(p\mid q)\,:\,p\in B_{2^{-r}}(x)\cap\Q^m\}\,:\,q\in B_{2^{-s}}(y)\cap\Q^n\big\}\,.\]
When the precisions $r$ and $s$ are equal, we abbreviate $K_{r,r}(x\mid y)$ by $K_r(x\mid y)$. As a matter of notational convenience, if we are given a non-integral positive real as a precision parameter, we will always round up to the next integer. 

A basic property, proven by Case and J. Lutz~\cite{CasLut15} shows that the growth rate of the Kolmogorov complexity of a point is essentially bounded from above by the dimension of the ambient space. Applying this to $\mathbb{R}^2$ $\mathbb{R}^2$, for any $\ve>0$, for sufficiently large $s$ we have that
\begin{equation*}
K_{r+s}(x)\leq K_r(x)+2 s + \ve s
\end{equation*}

We may \emph{relativize} the definitions in this section to an arbitrary oracle set $A \subseteq \N$. We will frequently consider the complexity of a point $x \in \R^n$ \emph{relative to a point} $y \in \R^m$, i.e., relative to an oracle set $A_y$ that encodes the binary expansion of $y$ is a standard way. We then write $K^y_r(x)$ for $K^{A_y}_r(x)$. Oracle access to the \emph{entire} binary expansion of a point is no less useful than conditional access to that binary expansion only up to a certain precision. Thus, we note that, for every $x\in\R^n$ and $y\in\R^m$,
\begin{equation}\label{eq:OraclesDontIncrease}
K_{s,r}(x\mid y)\geq K^y_s(x) - O(\log r) - O(\log s),
\end{equation}
for every $s, r\in\N$

One of the most useful properties of Kolmogorov complexity is that it obeys the \emph{symmetry of information}. Lemma~\ref{lem:unichain}, an expression of symmetry of information for points in $\mathbb{R}^n$, was proved in \cite{LutStu20}.
\begin{lem}[\cite{LutStu20}]\label{lem:unichain}
	For every $m,n\in\N$, $x\in\R^m$, $y\in\R^n$, and $r,s\in\N$ with $r\geq s$,
	\begin{enumerate}
		\item[\textup{(i)}]$\displaystyle |K_r(x\mid y)+K_r(y)-K_r(x,y)\big|\leq O(\log r)+O(\log\log \vert y\vert)\,.$
		\item[\textup{(ii)}]$\displaystyle |K_{r,s}(x\mid x)+K_s(x)-K_r(x)|\leq O(\log r)+O(\log\log\vert x\vert)\,.$
	\end{enumerate}
\end{lem}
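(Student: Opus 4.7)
The plan is to reduce both parts to the classical symmetry of information for finite strings, $|K(a,b) - K(a) - K(b \mid a)| \leq O(\log(K(a) + K(b)))$, and then to track how the definitions of $K_r$ and $K_{r,s}(\cdot \mid \cdot)$ introduce the additional logarithmic overhead. Since $K_r(z) \leq 2r + O(\log r) + O(\log |z|)$ for any $z$, any factor of the form $\log(K(a) + K(b))$ gets absorbed into $O(\log r) + O(\log\log |y|)$; the double logarithm arises because specifying the integer scale of a rational approximation to $y$ itself costs $O(\log\log |y|)$ bits, while refining within that scale costs at most $O(\log r)$.

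For the upper bound in (i), I would pick $q^* \in B_{2^{-r}}(y) \cap \Q^n$ attaining $K_r(y)$, and then pick $p^* \in B_{2^{-r}}(x) \cap \Q^m$ minimizing $K(p^* \mid q^*)$. Because $K_r(x \mid y)$ is defined as a max over choices of $q$, we have $K(p^* \mid q^*) \leq K_r(x \mid y)$. The pair $(p^*, q^*)$ is a valid precision-$r$ rational approximation to $(x,y)$, so $K_r(x,y) \leq K(p^*, q^*) + O(1)$, and classical SOI applied to the strings $p^*, q^*$ then delivers the desired upper bound.

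The lower bound in (i) is the delicate direction. From a shortest program for a pair $(p,q) \in B_{2^{-r}}(x,y) \cap \Q^{m+n}$, one projects out $q$ to obtain $K_r(y) \leq K_r(x,y) + O(1)$. To bound $K_r(x \mid y)$ one must produce, for \emph{every} $q' \in B_{2^{-r}}(y) \cap \Q^n$, some $p \in B_{2^{-r}}(x) \cap \Q^m$ with $K(p \mid q')$ bounded appropriately. Classical SOI yields $K(p \mid q) \leq K(p,q) - K(q) + O(\log r)$, but this conditions on $q$ rather than on the adversarial $q'$. The fix is to prepend $O(\log r) + O(\log\log|y|)$ bits of advice describing the offset $q - q'$ at the scale $2^{-r}$, so the conditional program first reconstructs $q$ from $q'$ and then runs the SOI construction.

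Part (ii) is a two-precision variant of the same argument: the upper bound $K_r(x) \leq K_s(x) + K_{r,s}(x \mid x) + O(\log r)$ follows because concatenating an optimal program for a precision-$s$ rational approximation of $x$ with the conditional program at precisions $r,s$ produces a precision-$r$ approximation of $x$, while the lower bound follows by truncating an optimal precision-$r$ approximation to obtain a precision-$s$ approximation and then invoking classical SOI on the two rationals. The main obstacle throughout is handling the $\max$/$\min$ structure in the definition of $K_{r,s}(\cdot \mid \cdot)$: one must ensure the constructions are uniform over all rational approximations of the conditioning point, and this uniformity is precisely what forces the $O(\log\log|y|)$ (respectively $O(\log\log|x|)$) term, reflecting the cost of first specifying the scale at which different rational approximations can be compared before they are interconverted.
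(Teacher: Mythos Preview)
The paper does not prove this lemma; it is imported directly from \cite{LutStu20} with no argument given here, so there is no in-paper proof to compare against. Your sketch follows the same strategy as the original source: reduce to the classical symmetry of information for finite strings and account for the logarithmic overhead incurred by the $\max$/$\min$ quantifier structure in the definition of $K_{r,s}(\cdot\mid\cdot)$ and by interconverting different rational approximations at scale $2^{-r}$. That is the right idea, and the identification of the lower bound in (i) as the delicate direction---because the conditioning rational is adversarial rather than the one you would like---is exactly where the work lies in \cite{LutStu20}. One small correction: in your upper-bound argument for (i) the inequality goes the right way, but your phrasing ``$K(p^*\mid q^*)\leq K_r(x\mid y)$'' is the consequence of $q^*$ being a \emph{particular} approximation while $K_r(x\mid y)$ is a maximum over all approximations; you have this right but the surrounding prose could be tightened.
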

	
A consequence of Lemma \ref{lem:unichain}, is the following. 
\begin{lem}[\cite{LutStu20}]\label{lem:symmetry}
	Let $m,n\in\N$, $x\in\R^m$, $z\in\R^n$, $\ve > 0$ and $r\in\N$. If $K^x_r(z) \geq K_r(z) - O(\log r)$, then the following hold for all $s \leq r$.
	\begin{enumerate}
		\item[\textup{(i)}]$\displaystyle K^x_s(z) \geq K_s(z) - O(\log r)\,.$
		\item[\textup{(ii)}]$\displaystyle K_{s, r}(x \mid z) \geq K_s(x)- O(\log r)\,.$
	\end{enumerate}
\end{lem}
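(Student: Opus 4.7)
The plan is to deduce both statements from the two parts of Lemma~\ref{lem:unichain}: symmetry of information (i) and the chain rule relating complexities at different precisions (ii). The hypothesis $K^x_r(z) \geq K_r(z) - O(\log r)$ can be read as asserting that oracle access to $x$ yields essentially no savings in computing $z$ at precision $r$; the task is to propagate this near-independence statement down to lower precisions, and, by symmetry, to swap the roles of $x$ and $z$.

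For part (i), apply the chain rule of Lemma~\ref{lem:unichain}(ii) both unrelativized and relativized to $x$:
\[K_r(z) = K_s(z) + K_{r,s}(z\mid z) + O(\log r), \qquad K^x_r(z) = K^x_s(z) + K^x_{r,s}(z\mid z) + O(\log r).\]
Since oracle access only decreases conditional complexity, $K^x_{r,s}(z\mid z) \leq K_{r,s}(z\mid z) + O(1)$. Subtracting the two identities and using the hypothesis $K_r(z) - K^x_r(z) \leq O(\log r)$ yields $K_s(z) - K^x_s(z) \leq O(\log r)$, which is (i).

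For part (ii), first derive the intermediate estimate $K_r(x\mid z) \geq K_r(x) - O(\log r)$ by applying symmetry of information (Lemma~\ref{lem:unichain}(i)) twice to the pair $(x,z)$:
\[K_r(x) + K_r(z\mid x) = K_r(x,z) + O(\log r) = K_r(z) + K_r(x\mid z) + O(\log r).\]
The hypothesis combined with (\ref{eq:OraclesDontIncrease}) gives $K_r(z\mid x) \geq K^x_r(z) - O(\log r) \geq K_r(z) - O(\log r)$; substituting and rearranging produces the claim. To pass from precision $r$ to the mixed precisions $(s,r)$, use the chain-rule-style inequality
\[K_r(x\mid z) \leq K_{s,r}(x\mid z) + K_{r,s}(x\mid x) + O(\log r),\]
obtained by concatenating a program that produces $x$ at precision $s$ from $z$ at precision $r$ with one that refines $x$ from precision $s$ to $r$. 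Combined with the identity $K_{r,s}(x\mid x) = K_r(x) - K_s(x) + O(\log r)$ from Lemma~\ref{lem:unichain}(ii), this yields $K_{s,r}(x\mid z) \geq K_s(x) - O(\log r)$.

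The main bookkeeping hurdle is the distinction between the conditional complexity $K_r(\cdot\mid\cdot)$, in which the conditioning datum is supplied only as a precision-$r$ rational, and the oracle complexity $K^{(\cdot)}_r(\cdot)$, in which it is supplied with full precision. The hypothesis is stated in the oracle formulation while conclusion (ii) is in the conditional one, and the asymmetric inequality (\ref{eq:OraclesDontIncrease})—used above in the direction $K_r(z\mid x) \geq K^x_r(z) - O(\log r)$—is what allows the two to be connected; all other steps are essentially bookkeeping with $O(\log r)$ error terms.
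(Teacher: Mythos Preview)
Your proof is correct. The paper does not supply its own proof of this lemma; it is quoted directly from \cite{LutStu20} as a consequence of symmetry of information. Your argument---part (i) via the precision chain rule (Lemma~\ref{lem:unichain}(ii)) applied with and without the oracle $x$, and part (ii) via symmetry of information (Lemma~\ref{lem:unichain}(i)) to swap the roles of $x$ and $z$ followed by the chain rule to descend from precision $r$ to $s$---is the standard derivation and matches how the result is obtained in \cite{LutStu20}.
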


\bigskip

In this paper, we use Mayordomo's characterization \cite{Mayordomo02} of the \emph{effective Hausdorff dimension}  of a point $x\in\R^n$. This is defined as
\[\dim(x)=\liminf_{r\to\infty}\frac{K_r(x)}{r}\,.\]
The \emph{effective packing dimension} of a point $x\in\R^n$ is
\[\Dim(x)=\limsup_{r\to\infty}\frac{K_r(x)}{r}\,.\]
We can relativize both definitions, so that the effective Hausdorff and packing dimension \textit{with respect to an oracle} $A\subseteq \N$ are
\begin{center}
    $\dim^A(x)=\liminf\limits_{r\to\infty}\frac{K^A_r(x)}{r}$ and $\Dim^A(x)=\limsup\limits_{r\to\infty}\frac{K^A_r(x)}{r}$
\end{center}

The following lemma expresses that we can often use an oracle to lower the complexity of points at specified precisions without unduly affecting the complexity of other objects. 

\begin{lem}[\cite{LutStu20}]\label{lem:oracles}
Let $z\in\R^2$, $\eta \in\Q_+$, and $r\in\N$. Then there is an oracle $D=D(r,z,\eta)$ with the following properties.
\begin{itemize}
\item[\textup{(i)}] For every $t\leq r$,
\[K^D_t(z)=\min\{\eta r,K_t(z)\}+O(\log r)\,.\]
\item[\textup{(ii)}] For every $m,t\in\N$ and $y\in\R^m$,
\[K^{D}_{t,r}(y\mid z)=K_{t,r}(y\mid z)+ O(\log r)\,,\]
and
\[K_t^{z,D}(y)=K_t^z(y)+ O(\log r)\,.\]
\item[\textup{(iii)}] If $B\subseteq\N$ satisfies $K^B_r(z) \geq K_r(z) - O(\log r)$, then \[K_r^{B,D}(z)\geq K_r^D(z) - O(\log r)\,.\]
\item[\textup{(iv)}] For every $t\in\N$, $u\in\R^n, w\in\R^m$
\[K_{r,t}(u\mid w) \leq K^D_{r,t}(u\mid w) + K_r(z) - \eta r + O(\log r)\,.\]
\end{itemize}
In particular, this oracle $D$ encodes $\sigma$, the lexicographically first time-minimizing witness to $K(z\uhr r\mid z\uhr s)$, where $s = \max\{t \leq r \, : \, K_{t-1}(z) \leq \eta r\}$.
\end{lem}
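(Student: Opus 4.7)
The plan is to construct the oracle $D$ explicitly around the witness $\sigma$ described in the statement, and then verify the four bounds by a combination of substitution arguments, symmetry of information (Lemma \ref{lem:unichain}), and the Kolmogorov-randomness of minimum-length witnesses. Set $s = \max\{t \leq r : K_{t-1}(z) \leq \eta r\}$ and take $\sigma$ to be the lexicographically first time-minimizing program witnessing $K(z\uhr r \mid z\uhr s)$; let $D$ encode $\sigma$ together with the integers $r$, $s$ and $\eta$ (an $O(\log r)$ overhead). Since $\mathbb{R}^2$ has bounded ambient dimension, $K_s(z) \leq K_{s-1}(z) + O(1) \leq \eta r + O(1)$, so by Lemma \ref{lem:unichain}(ii) the length of $\sigma$ satisfies
\[
|\sigma| = K(z\uhr r \mid z\uhr s) = K_r(z) - K_s(z) + O(\log r) = K_r(z) - \eta r + O(\log r).
\]

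For (i), the upper bound is routine: if $t \leq s$ then $K^D_t(z) \leq K_t(z) + O(\log r) \leq \eta r + O(\log r)$ is immediate, and if $t > s$ one computes $z\uhr s$ using at most $K_s(z) \leq \eta r$ bits and invokes $\sigma$ via $D$ to recover $z\uhr r$, giving $K^D_t(z) \leq \eta r + O(\log r)$. The matching lower bound is the technical heart. The crucial observation is that $\sigma$ is incompressible relative to $z\uhr s$, namely $K(\sigma \mid z\uhr s) \geq |\sigma| - O(\log r)$, because otherwise one could produce a shorter program for $z\uhr r$ from $z\uhr s$, contradicting minimality of $\sigma$. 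For $t \leq s$, since $z\uhr t$ is a prefix of $z\uhr s$, one has $K(\sigma \mid z\uhr t) \geq K(\sigma \mid z\uhr s) - O(\log r) \geq |\sigma| - O(\log r)$; Lemma \ref{lem:unichain}(i) applied to $(z\uhr t, \sigma)$ then yields $K(z\uhr t \mid \sigma) = K_t(z) + O(\log r)$, which is exactly $K^D_t(z) = K_t(z) + O(\log r)$. For $s < t \leq r$ one needs $K^D_t(z) \geq \eta r - O(\log r)$; but once $z\uhr t$ is known, a single oracle call to $\sigma$ delivers $z\uhr r$, so $K^D_r(z) \leq K^D_t(z) + O(\log r)$, and combining with the upper bound at precision $r$ closes the gap.

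Properties (ii) and (iii) formalize the intuition that $\sigma$ is computable from $(z\uhr r, z\uhr s)$, so relative to $z$ the oracle $D$ supplies no fresh information. For (ii), the $\leq$ direction in each equality is trivial (adding an oracle can only decrease complexity), while the $\geq$ direction follows by reconstructing $\sigma$ from $z$ and hard-coding it into any $D$-oracle program. For (iii), Lemma \ref{lem:symmetry} converts the hypothesis $K^B_r(z) \geq K_r(z) - O(\log r)$ into an essentially $B$-independent copy of $z\uhr r$, and one then runs the reconstruction argument relative to the joint oracle $(B, D)$. Property (iv) is then immediate from $|D| = K_r(z) - \eta r + O(\log r)$: any $D$-oracle computation of $u$ from $w$ can be converted to an unconditional one by hard-coding $D$, at additive cost $|D| + O(\log r)$. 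The main obstacle is the lower bound in (i) for $t \leq s$; this is where the ``lexicographically first time-minimizing'' choice of $\sigma$ is essential rather than cosmetic, since one needs exactly the incompressibility $K(\sigma \mid z\uhr s) \geq |\sigma| - O(\log r)$ to prevent $D$ from leaking useful information about shorter prefixes of $z$.
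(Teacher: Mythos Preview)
The paper does not prove this lemma; it is quoted from \cite{LutStu20} and used as a black box throughout. So there is no in-paper argument to compare against, and your proposal is effectively a reconstruction of the original proof from \cite{LutStu20}. As such a reconstruction it is sound: the construction of $D$ around $\sigma$, the computation $|\sigma| = K_r(z) - \eta r + O(\log r)$ via Lemma~\ref{lem:unichain}(ii), the symmetry-of-information reduction for the lower bound in (i), the reconstruction of $\sigma$ from $z\uhr r$ for (ii) and (iii), and the hard-coding bound for (iv) are all correct and are exactly the ingredients used in \cite{LutStu20}.

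One misattribution worth flagging: in your final sentence you say the ``lexicographically first time-minimizing'' choice is essential for the lower bound in (i). It is not. The incompressibility $K(\sigma \mid z\uhr s) \geq |\sigma| - O(1)$ holds for \emph{any} length-minimizing witness $\sigma$, simply because a shorter description of $\sigma$ from $z\uhr s$ would yield a shorter description of $z\uhr r$ from $z\uhr s$. Where the specific canonical choice matters is in (ii) and (iii): one needs $\sigma$ to be \emph{computable} from $z\uhr r$ (with $O(\log r)$ advice for the value $K(z\uhr r \mid z\uhr s)$), and the lex-first time-minimizing witness is the standard way to pin down a single such $\sigma$ by a halting search. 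Your proof of (ii) and (iii) uses exactly this computability, so the ingredient is deployed correctly even if the commentary places it in the wrong clause. A second minor point: your closing argument for (i) when $s < t \leq r$ says ``combining with the upper bound at precision $r$,'' but what you actually need there is the \emph{lower} bound $K^D_r(z) \geq \eta r - O(\log r)$, which follows from the $t \leq s$ case at $t = s$ together with $K^D_r(z) \geq K^D_s(z) - O(\log r)$.
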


\bigskip

\subsection{The Point-to-Set Principle}\label{subsec:ptsp}
The \emph{point-to-set principle} shows that the Hausdorff and packing dimension of a set can be characterized by the effective Hausdorff and effective packing dimension of its individual points. It is the bridge that we use to relate our effective results to our classical theorems in geometric measure theory. 
\begin{thm}[Point-to-set principle~\cite{LutLut18}]\label{thm:p2s}
Let $n \in \N$ and $E \subseteq \R^n$. Then
\begin{equation*}
\dimH(E) = \adjustlimits\min_{A \subseteq \N} \sup_{x \in E} \dim^A(x).
\end{equation*}
\begin{equation*}
\dimP(E) = \adjustlimits\min_{A \subseteq \N} \sup_{x \in E} \Dim^A(x).
\end{equation*}
\end{thm}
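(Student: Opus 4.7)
The plan is to prove each equality by establishing both inequalities, and then upgrade the infimum on the right to a minimum by combining countably many witnessing oracles into a single direct-sum oracle. The Hausdorff case is the core; the packing case follows the same template with two small modifications.

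For the inequality $\dimH(E) \ge \min_{A}\sup_{x\in E}\dim^A(x)$, fix $s > \dimH(E)$ and build an oracle $A_s$ with $\sup_{x\in E}\dim^{A_s}(x) \le s$. Since $\mathcal{H}^s(E) = 0$, for each $k \in \N$ there is a countable cover $\mathcal{C}_k$ of $E$ by open balls with rational centers and radii, each of diameter at most $2^{-k}$, satisfying $\sum_{B \in \mathcal{C}_k}(\mathrm{diam}\, B)^s < 2^{-k}$. Encode the sequence $(\mathcal{C}_k)_k$ and the dyadic-diameter partition of each $\mathcal{C}_k$ into $A_s$. For each $x \in E$ and each $k$, pick a ball $B_k \in \mathcal{C}_k$ containing $x$, with dyadic diameter in $(2^{-r_k-1},2^{-r_k}]$ for some $r_k \ge k$; the sum condition bounds the number of balls of $\mathcal{C}_k$ in this dyadic class by $2^{r_k s - k + O(1)}$, so specifying $k$, $r_k$, and the index of $B_k$ relative to $A_s$ costs at most $r_k s - k + O(\log r_k + \log k)$ bits. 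The rational center of $B_k$ is then a $2^{-r_k}$-approximation of $x$, giving $K^{A_s}_{r_k}(x) \le s\cdot r_k + O(\log r_k)$. Since $r_k \to \infty$ with $k$, this yields infinitely many precisions $r$ at which $K^{A_s}_r(x)/r \le s + o(1)$, hence $\dim^{A_s}(x) \le s$. Letting $s \searrow \dimH(E)$ gives the inequality.

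For the reverse inequality, fix any oracle $A$, let $\sigma := \sup_{x\in E}\dim^A(x)$, and choose $s' > s > \sigma$. For each $x \in E$ there are infinitely many precisions $r$ with $K^A_r(x) \le sr$, so $x$ lies in some ball $B(q, 2^{-r})$ whose rational center $q$ is printed by an oracle program of length at most $sr$. For each cutoff $N$, the family
\[
\mathcal{D}_N = \{B(U^A(\pi), 2^{-r}) : r \ge N,\ |\pi| \le sr\}
\]
covers $E$, and its $s'$-dimensional cover sum is bounded by
\[
\sum_{r \ge N} 2^{sr+1}(2\cdot 2^{-r})^{s'} \;=\; O\!\left(\sum_{r \ge N} 2^{r(s-s')}\right),
\]
which tends to $0$ as $N \to \infty$ since $s' > s$. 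Thus $\mathcal{H}^{s'}(E) = 0$, so $\dimH(E) \le s'$, and sending $s', s \searrow \sigma$ closes the direction.

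For the packing equalities, the $\ge$ direction uses the countable-stability characterization $\dimP(E) = \inf\{\sup_j \overline{\dim}_B E_j : E = \bigcup_j E_j\}$: fix $s > \dimP(E)$, decompose $E = \bigcup_j E_j$ with $\overline{\dim}_B E_j < s$, and encode into the oracle an optimal $2^{-r}$-scale cover of each $E_j$ at every precision $r$, giving $K^A_r(x) \le sr + O(\log r)$ for \emph{all} sufficiently large $r$ and every $x \in E$, hence $\Dim^A(x) \le s$. The $\le$ direction stratifies $E$ by $E_m := \{x : K^A_r(x) \le \tau' r \text{ for all } r \ge m\}$ (where $\tau' > \tau := \sup_x \Dim^A(x)$), bounds $\overline{\dim}_B E_m$ by program-counting as above, and invokes countable stability. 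Finally, the infimum-to-minimum upgrade in all four cases follows by taking a sequence of witnessing oracles $A_{s_k}$ for $s_k \searrow \dimH(E)$ (respectively $\dimP(E)$) and forming $A := \bigoplus_k A_{s_k}$; relative to $A$, every relativization bound from every $A_{s_k}$ still holds, so $\sup_x \dim^A(x) \le s_k$ for every $k$. The main obstacle throughout is the careful bookkeeping of the $O(\log r)$ overheads so they do not accumulate across infinitely many scales, together with the contrast between Hausdorff's ``infinitely many $r$'' condition and packing's ``all sufficiently large $r$'' condition — it is precisely the latter that forces the countable-union decomposition in the packing case.
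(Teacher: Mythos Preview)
The paper does not give a proof of this theorem; it is quoted from \cite{LutLut18} as background and used as a black box. Your argument is essentially the standard Lutz--Lutz proof: encode efficient Hausdorff covers into an oracle to get $\dim^{A_s}(x)\le s$ along a sequence of precisions, and conversely cover $E$ by output-balls of short $A$-programs to bound $\mathcal{H}^{s'}(E)$; then handle packing via the countable-stability/box-dimension characterization and upgrade the infimum to a minimum by taking the join of countably many witnessing oracles. The bookkeeping you sketch (dyadic stratification of each cover, program-counting bound $2^{sr+1}$, the $r_k\ge k\to\infty$ observation, and the $\liminf$/$\limsup$ distinction between Hausdorff and packing) is exactly what is needed, and the proof is correct as outlined.
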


The point-to-set principle is extremely useful, but for some applications, we will also need to be able to say something more specific about \emph{which} oracles are Hausdorff oracles for a given set. Hitchcock~\cite{Hitchcock05} and J. Lutz~\cite{Lutz03a} showed that this is possible for a restricted class of sets.

A set $E \subseteq \R^n$ is \textit{effectively compact relative to} $A$ if the set of finite open covers of $E$ by rational balls is computably enumerable relative to $A$. We will use the fact that every compact set is effectively compact relative to some oracle. Hitchcock \cite{Hitchcock05} and J. Lutz \cite{Lutz03a} proved that, if $E$ is effectively compact relative to an oracle $A$, then $A$ is a Hausdorff oracle for $E$ (i.e., $A$ testifies to the point-to-set principle).

\section{Projection theorem}
The purpose of this section is to prove the following (algorithmic) theorem.
\begin{thm}\label{thm:mainProjection}
    Let $x\in \mathbb{R}^2, e\in S^1, \sigma \in \Q \cap (0,1), \ve>0, C\in\mathbb{N}, A\subseteq\mathbb{N}$, and $t<r\in \mathbb{N}$. Let $D \geq \Dim^A(x)$ and $\alpha = \min\{D, 1+\sigma\}$. Suppose that $r$ is sufficiently large and that the following hold
\begin{enumerate}
    \item $0<\sigma<\dim^A(x)\leq 1$,
    \item $t\geq \max\{\frac{\sigma(\sigma + 1 - \alpha)}{\alpha + \sigma + \sigma^2 - \sigma \alpha}r, \frac{r}{C}\}$,
    \item $K_s^{A, x}(e)\geq\sigma s - C\log s$ for all $s\leq t$
\end{enumerate}
Then
\begin{equation*}
    K_r^A(x\vert p_ex, e)\leq K^A_r(x) - \sigma r + \frac{(\alpha - \sigma)\sigma}{\alpha}(r-t) + \ve r.
\end{equation*}

\end{thm}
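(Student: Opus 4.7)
The plan is to prove the theorem by reducing, via symmetry of information, to a lower bound on the joint complexity $K_r^A(p_e x, e)$, and then establishing that lower bound through an oracle-based enumeration argument in the style of \cite{LutStu20, FieStu23}, adapted to the low-dimensional regime $\dim^A(x) \leq 1$.

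First, the chain rule (Lemma \ref{lem:unichain}) together with the fact that $p_e x$ is computable from $(x,e)$ gives
\[
K_r^A(x \mid p_e x, e) = K_r^A(x, e) - K_r^A(p_e x, e) + O(\log r) = K_r^A(x) + K_r^A(e \mid x) - K_r^A(p_e x, e) + O(\log r).
\]
Thus the theorem is equivalent to the lower bound
\[
K_r^A(p_e x, e) \geq K_r^A(e \mid x) + \sigma r - \frac{(\alpha-\sigma)\sigma}{\alpha}(r-t) - \ve r,
\]
which is an effective Marstrand-style projection inequality for the pair $(p_e x, e)$, with a precise scale-dependent error term.

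To establish that lower bound, I would first apply Lemma \ref{lem:oracles} to $e$ with threshold parameter close to $\sigma$, obtaining an oracle $D$ that ``caps'' the complexity of $e$ at roughly $\sigma r$ while preserving its complexity profile at scales $s \leq t$, where hypothesis (3) already controls it. Relative to $A \oplus D$, the direction $e$ behaves as if its effective complexity were linear with slope $\sigma$ all the way up to scale $r$, which is the setting where a clean effective projection inequality applies. An enumeration argument supplies this inequality: the points $y$ with $|p_e y - p_e x| \leq 2^{-r}$ lie in a strip of width $O(2^{-r})$ perpendicular to $e$, and discretizing the strip at each scale $s \leq r$ allows one to bound the number of admissible $y$ consistent with both $e$ and the complexity profile of $x$. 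The complexity of $e$ relative to $A \oplus D$ controls how thin the strip effectively is at each scale, yielding roughly $\sigma r$ bits of contribution to $K_r^{A,D}(p_e x, e)$. Parts (iii) and (iv) of Lemma \ref{lem:oracles} then transfer the bound back to the oracle $A$; the cost of this transfer is exactly the excess complexity that $D$ artificially removed from $e$ at scales $s \in [t, r]$, which is bounded in terms of the constraint $\Dim^A(x) \leq D$.

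The main obstacle will be the scale range $s \in (t, r]$: here we have no direct lower bound on $K_s^{A,x}(e)$, so quantitative control must come from combining the $\Dim^A$ bound on $x$ with the oracle normalization of $e$. The specific coefficient $\frac{(\alpha - \sigma)\sigma}{\alpha}$ emerges from a small variational argument over the possible complexity profiles of $e$ on $[t, r]$ subject to the joint constraint $\alpha = \min\{D, 1+\sigma\}$: the factor $\sigma/\alpha$ plays the role of the projection efficiency at scales where the total dimension is near $\alpha$, and $\alpha - \sigma$ bounds the per-unit excess complexity allowed by the $\Dim^A(x) \leq D$ hypothesis. The lower bound on $t$ in hypothesis (2) is precisely the regime in which this balancing is nontrivial and the resulting correction does not overwhelm the main term $\sigma r$.
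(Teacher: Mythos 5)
Your opening reduction via symmetry of information is valid, but the plan that follows diverges from a workable proof in a way that leaves the hard part unaddressed, and it misidentifies where the error term $\frac{(\alpha-\sigma)\sigma}{\alpha}(r-t)$ comes from.

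The central obstacle in this theorem is not controlling the complexity of $e$ on $(t,r]$; it is controlling the complexity profile of $x$ itself. The paper's proof partitions $[1,r]$ into $\sigma$-yellow intervals (where $s\mapsto K^A_s(x)$ has slope at least $\sigma$) and $\sigma$-teal intervals (where it doesn't), via the red/green/blue decomposition of Lemma~\ref{lem:redGreenBluePartitionProjections}. On each piece, the bound is obtained by capping $K^A_\cdot(x)$ (not $K^A_\cdot(e)$) with Lemma~\ref{lem:oracles} so that the hypotheses of the enumeration lemma hold; the fiber estimate comes from Lemma~\ref{lem:intersectionLemmaProjections}, which bounds $K^A_r(w)$ from below for any $w$ sharing a projection with $x$, in terms of $K^A_t(x)$ (the complexity of $x$ at the scale where $w$ departs from $x$) plus $K^A_{r-t,r}(e\mid x)$. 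When $\dim^A(x)\leq 1$ and $\sigma<1$, the complexity of $x$ can stagnate on long intervals, and on such intervals the yellow estimate would be vacuous; the teal estimate (Lemma~\ref{lem:almostSigmaTealBound}) and the green-interval length floor from the partition are precisely what rescue the argument. Your plan contains none of this: there is no partition, no distinction between scales where $x$'s complexity is growing or stalling, and no use of Lemma~\ref{lem:intersectionLemmaProjections}.

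Concretely, capping $e$ with Lemma~\ref{lem:oracles} does not do what you describe: part~(i) is a flat cap at $\eta r$, not a slope renormalization, so it does not make $e$ ``behave as if its effective complexity were linear with slope $\sigma$''. Worse, capping $e$ would only lower $e$'s complexity, which weakens the fiber lower bound from Lemma~\ref{lem:intersectionLemmaProjections} and thus weakens the enumeration; the oracle must instead be applied to $x$ so that the first hypothesis of Lemma~\ref{lem:enumerationLemmaProjections} holds while the fiber bound is preserved. Finally, the coefficient $\frac{(\alpha-\sigma)\sigma}{\alpha}$ does not arise from any variational principle over complexity profiles of $e$: it comes from the case analysis on the number $S$ of red--green--blue sequences in the partition of $[1,r]$. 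For instance, in the $S=1$ case one shows the left endpoint $r_1$ of the green block satisfies $r_1\leq\frac{\sigma}{\alpha}(r-t)$, and the yellow contribution below $r_1$ is at most $(\alpha-\sigma)r_1$; the $S\geq 2$ case hinges on the green intervals having total length at least $2t$ (the green-length floor from Lemma~\ref{lem:redGreenBluePartitionProjections}), and the lower bound on $t$ in hypothesis~(2) is used exactly there. Here $\alpha-\sigma$ bounds the per-scale yellow loss because $D\geq\Dim^A(x)$ caps the slope of $K^A_\cdot(x)$; it has nothing to do with $e$. Without the partition machinery and the resulting case analysis, there is no route to the stated error term, so the proposal as written has a genuine gap.
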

This theorem gives strong upper bounds on the number of bits needed to compute a point $x\in \R^2$ to precision $r$, if the direction $e$ and the projection $p_e x$ are known to precision $r$. 

The proof of this theorem involves several steps. The first, detailed in Section \ref{ssec:projectionLemmas}, gives tight bounds on the complexity $K^A_{b,b,b,a}(x\mid p_e x, e, x)$ for special intervals of precisions $[a, b]$. The intervals of precisions considered depend on the behavior of the complexity function $K^A(x)$ on the interval $[a,b]$.

The auxiliary results of Section \ref{ssec:projectionLemmas} are only applicable on a given interval of precisions when the complexity function $K_s^A(x)$ is of a certain form on that interval. In Section \ref{ssec:admissiblePartitions} we construct a partition of $[1,r]$ such that every interval in the partition is of the necessary type to apply the results of Section \ref{ssec:projectionLemmas}. 

Finally, in Section \ref{ssec:mainProjTheorem} we use this partition to prove our main theorem.

\subsection{Projection lemmas}\label{ssec:projectionLemmas}
To begin, we recall two lemmas which we will need for the results of this section. 
\begin{lem}[\cite{LutStu20}]\label{lem:intersectionLemmaProjections}
Let $A\subseteq\N$, $x \in \R^2$, $e \in S^{1}$, and $r \in \N$. Let $w \in \R^2$ such that  $p_e x = p_e w$ up to precision $r$. Then 
\begin{equation*}
    K^A_r(w) \geq K^A_t(x) + K^A_{r-t,r}(e\mid x) + O(\log r)\,,
\end{equation*}
where $t := -\log \vert x-w\vert$.
\end{lem}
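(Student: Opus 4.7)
The plan is to prove the lemma by two reductions. The geometric content is that since $p_e x = p_e w$ up to precision $r$, the vector $x - w$ is nearly perpendicular to $e$ and has norm $|x-w| = 2^{-t}$; hence its direction determines $e^\perp$ (and therefore $e$) to absolute precision $2^{-(r-t)}$. I will first show that $K^A_r(w)$ essentially splits as $K^A_t(x) + K^A_r(w \mid x)$, and then that the residual $K^A_r(w \mid x)$ dominates $K^A_{r-t, r}(e \mid x)$.

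First I would establish $K^A_r(w) \geq K^A_t(x) + K^A_r(w \mid x) - O(\log r)$. The chain rule from Lemma~\ref{lem:unichain}(i) gives
\[K^A_r(w) = K^A_r(x) + K^A_r(w \mid x) - K^A_r(x \mid w) + O(\log r)\,,\]
so it suffices to bound $K^A_r(x \mid w) \leq K^A_r(x) - K^A_t(x) + O(\log r)$. Since $|x - w| = 2^{-t}$, any precision-$r$ estimate of $w$ lies within $2 \cdot 2^{-t}$ of $x$, hence doubles as a precision-$(t - O(1))$ estimate of $x$. Refining from this level to full precision $r$ costs at most $K^A_{r, t}(x \mid x) + O(\log r)$, and by Lemma~\ref{lem:unichain}(ii) this equals $K^A_r(x) - K^A_t(x) + O(\log r)$.

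Next I would show $K^A_r(w \mid x) \geq K^A_{r-t, r}(e \mid x) - O(\log r)$ by a program transformation. For every precision-$r$ rational estimate $q$ of $x$, let $\pi_q$ be a shortest $A$-program that on input $q$ outputs a precision-$r$ estimate $\hat w$ of $w$. Construct $\pi'_q$ which runs $\pi_q$, computes the unit vector $(q - \hat w)/|q - \hat w|$, rotates it by a right angle, and outputs the result together with one bit encoding the sign. Because $|q - x| \leq 2^{-r}$ and $|\hat w - w| \leq 2^{-r}$, one has $|(q - \hat w) - (x - w)| \leq 2 \cdot 2^{-r}$, and since $|x - w| = 2^{-t}$ the induced error in the normalized vector is $O(2^{-(r-t)})$. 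Thus $\pi'_q$ produces a precision-$(r-t)$ estimate of $e$ using $|\pi_q| + O(\log r)$ bits; taking the maximum over $q$ yields the claim.

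Combining the two inequalities gives the lemma. The main technical obstacle is keeping the various conditional precisions aligned, particularly in justifying the reduction of $K^A_r(x \mid w)$ to $K^A_{r, t}(x \mid x)$ through the observation that $w$ at precision $r$ determines $x$ at precision $t - O(1)$. The remaining nuisances, namely the $\pm$ sign ambiguity in recovering $e$ from $\pm e^\perp$ and the constant-bit precision losses in the normalization step, are absorbed into the $O(\log r)$ slack.
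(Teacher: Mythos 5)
The paper cites this lemma from Lutz and Stull's earlier work \cite{LutStu20} and does not supply a proof, so there is no in-paper argument to compare against. Your reconstruction is correct and is essentially the argument used in the cited source: the first reduction uses the chain rule (Lemma~\ref{lem:unichain}(i)) together with the observation that a precision-$r$ estimate of $w$ is automatically a precision-$(t-O(1))$ estimate of $x$, so that $K^A_r(x\mid w)\leq K^A_{r,t}(x\mid x)+O(\log r)$, which by Lemma~\ref{lem:unichain}(ii) collapses to $K^A_r(x)-K^A_t(x)+O(\log r)$; the second reduction is the program transformation recovering $e$ from $q$ and $\hat w$ by normalizing and rotating, with the $\pm$-sign paid for by one extra bit. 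Two small points worth being explicit about if you were to write this up in full: you should note that when $r\leq t+O(1)$ the claim is vacuous (absorbed by the $O(\log r)$ term), and that $|q-\hat w|\geq 2^{-t}-2^{1-r}\geq 2^{-t-1}$ for $r\geq t+2$ so the normalization in $\pi'_q$ is stable; you should also record the standard inequality $\big|u/|u|-v/|v|\big|\leq 2|u-v|/\max(|u|,|v|)$ that turns the additive error $2\cdot 2^{-r}$ in $q-\hat w$ into the $O(2^{-(r-t)})$ error in the unit vector. Finally, your argument produces the bound with a $-O(\log r)$ error term, which is what the inequality should read (the $+O(\log r)$ in the statement as printed is a sign typo, since a lower bound gets weaker, not stronger, with a positive additive slack).
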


The next lemma was essentially proved by Lutz and Stull \cite{LutStu20}, with slight modifications for the purposes of this paper.
\begin{lem}\label{lem:enumerationLemmaProjections}
    Suppose that $x\in\mathbb{R}^2$, $e\in S^1$, $t<r\in\mathbb{N}$, $\sigma\in(0, 1]$, $B\subseteq\mathbb{N}$, and $\eta, \ve\in \mathbb{Q}^+$ satisfy the following conditions
\begin{enumerate}
    \item $K^B_r(x)\leq \eta r + \frac{\ve}{2} r$
    \item For every $w\in B_{2^{-t}}(x)$ such that $p_e w = p_e x$,
    \begin{equation*}
        K^B_r(w)\geq \eta r + \min\{\ve r, \sigma(r-s) - \ve r\}
    \end{equation*}
    where $s$ = $-\log\vert x - w\vert \leq r$
\end{enumerate}
Then,
\begin{equation*}
K^{B}_{r, r,r, t}(x\vert p_e x, e, x)\leq \frac{3 \ve}{\sigma}r + K(\ve, \eta) + O(\log r)
\end{equation*}

\end{lem}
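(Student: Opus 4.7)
The plan is to prove the lemma via a standard enumeration argument in the spirit of the algorithmic dimension proofs of Lutz and Stull. I would construct a $B$-oracle machine which, given rational approximations $\tilde{q}$ of $p_e x$ at precision $r$, $\tilde{e}$ of $e$ at precision $r$, and $\tilde{x}$ of $x$ at precision $t$, enumerates a short list of precision-$r$ rational candidates for $x$, then identifies $x$ within this list using few extra bits. The encoding of $x$ will thus comprise $\ve$, $\eta$, the enumeration threshold, and the index of $x$ within the enumeration.

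Setting the threshold $\kappa := \eta r + (\ve/2) r + O(\log r)$, I would enumerate all precision-$r$ rationals $p\in\Q^2$ satisfying: (a) $|p - \tilde x| \leq 2^{-t} + 2^{-r}$; (b) $|p_{\tilde e}(p) - \tilde q| \leq C \cdot 2^{-r}$ for a suitable constant $C$ absorbing the errors coming from $|e - \tilde e|$ and $|p_e x - \tilde q|$; and (c) some $B$-program of length at most $\kappa$ outputs $p$ (enumerated in a standard time-sharing fashion). By condition (1), a precision-$r$ approximation $p_0$ of $x$ has $K^B_r(p_0) \leq K^B_r(x) + O(1) \leq \eta r + (\ve/2) r + O(1)$, and trivially satisfies (a) and (b); hence $p_0$ appears in this enumeration.

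To bound the size of the enumeration, I fix an enumerated $p$ and let $w$ be the foot of the perpendicular from $p$ onto $L_x := \{y \in \R^2 : p_e y = p_e x\}$. Condition (b) together with the precision-$r$ control on $\tilde e, \tilde q$ gives $|p - w| = O(2^{-r})$, hence $K^B_r(w) \leq K^B_r(p) + O(\log r) \leq \kappa + O(\log r)$. For $r$ sufficiently large, $w \in B_{2^{-t}}(x)$, so condition (2) applies:
\[\eta r + \tfrac{\ve}{2} r + O(\log r) \;\geq\; \eta r + \min\{\ve r,\; \sigma(r - s) - \ve r\},\]
where $s = -\log |x - w|$. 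The left side falls short of $\eta r + \ve r$, so the minimum cannot equal $\ve r$; it must equal $\sigma(r - s) - \ve r$, giving $r - s \leq (3\ve r)/(2\sigma) + O(\log r)$. In particular $|x - p| \leq 2^{-r + O(\ve r/\sigma)}$. So every enumerated $p$ lies in a strip of width $O(2^{-r})$ about $L_x$ intersected with a ball of radius $2^{-r + O(\ve r/\sigma)}$ about $x$; this region contains at most $2^{O(\ve r/\sigma)}$ precision-$r$ rationals. Specifying $p_0$ as an index takes $O(\ve r/\sigma)$ bits, and the additional $K(\ve, \eta) + O(\log r)$ bits suffice to encode the rationals $\ve, \eta$, the threshold, and the precisions. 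Tracking the absolute constants yields the stated $3\ve/\sigma$.

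The main obstacle is the careful bookkeeping of approximation errors. Conditions (1) and (2) are stated for the exact point $x$ and for $w$ exactly on $L_x$, but the machine only has access to rational approximations at the stipulated precisions; one must track how the strip around $L_x$ widens and how the ball around $x$ enlarges under these approximations, and verify that the auxiliary projection $w$ of $p$ onto $L_x$ is indeed within $B_{2^{-t}}(x)$ so that condition (2) can be invoked. Once these errors are absorbed into the $O(\log r)$ terms, the remainder is a direct counting argument.
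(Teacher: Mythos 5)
Your approach follows the same enumeration-based framework that the paper uses for its analogous Lemma~\ref{lem:distanceEnumeration} (the paper only cites Lutz--Stull for Lemma~\ref{lem:enumerationLemmaProjections} itself): build an oracle machine that dovetails short $B$-programs, filters their outputs through the geometric constraints furnished by $\tilde x, \tilde q, \tilde e$, and then invokes condition~(2) to force any surviving candidate to lie within $2^{-s}$ of $x$ with $r-s \lesssim \ve r/\sigma$. The one structural difference is the final encoding step: the paper's machine halts on the \emph{first} admissible candidate $p_2$ and then spends $\approx 2(r-s)$ additional bits to refine $p_2$ to a precision-$r$ estimate of the target, whereas you enumerate the entire admissible list and specify the target by its \emph{index}, which is $\approx (r-s)$ bits since the admissible region is a $O(2^{-r}) \times O(2^{-s})$ slab. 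Both routes clear the claimed $\tfrac{3\ve}{\sigma} r$ budget; yours in fact yields the marginally sharper $\tfrac{3\ve}{2\sigma}r$, though that gain is not needed.

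One point to tighten: you write that the discrepancy in verifying $w \in B_{2^{-t}}(x)$ can be ``absorbed into the $O(\log r)$ terms,'' but this is not an $O(\log r)$ matter. The enumerated $p$ satisfies $|p - \tilde x| \le 2^{-t} + 2^{-r}$ with $\tilde x$ itself only a precision-$t$ estimate of $x$, so $|w - x|$ can be as large as roughly $2^{1-t}$, placing $w$ outside $B_{2^{-t}}(x)$ by a constant factor rather than a $\log r$ correction. The clean fix is to observe that in every application of the lemma (Lemmas~\ref{lem:almostSigmaYellowBound} and~\ref{lem:almostSigmaTealBound}) condition~(2) is established via Lemma~\ref{lem:intersectionLemmaProjections}, whose hypotheses are insensitive to shifting $t$ by $O(1)$, so one may simply state and prove the lemma with $B_{2^{1-t}}(x)$ in condition~(2), or replace $t$ by $t-1$; either change propagates harmlessly. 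Alternatively one can note that the paper's own proof of Lemma~\ref{lem:distanceEnumeration} glosses over the identical point, so you are in good company, but the gap should be named as a constant shift in $t$ rather than a $\log$ correction. With that adjustment your argument is sound.
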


Equipped with these facts, we can prove the following two lemmas which bound $K_{r, r, r, t}(x\vert p_e x, e, x)$ on certain kinds of intervals. 
\begin{lem}\label{lem:almostSigmaYellowBound}
    Let $x\in\mathbb{R}^2$, $e\in S^1$, $\ve\in\mathbb{Q}^+$, $C > 0$, $\sigma\in(0, 1]\cap\mathbb{Q}_+$, $A\subseteq{N}$, and $t<r\in\mathbb{N}$ be given. For sufficiently large $r$, if
    \begin{enumerate}
        \item $K_s^{A, x}(e)\geq \sigma s - C \log r$ for all $s \leq r - t$, and
        \item $K^A_{s, t}(x)\geq \sigma(s-t) - \ve r$ for all $t \leq s \leq r$.
    \end{enumerate}
Then, 
    \begin{equation*}
    K^A_{r, r, r, t}(x\vert p_e x, e, x)\leq K^A_{r, t}(x\vert x) - \sigma (r - t) + \frac{7\ve}{\sigma} r
    \end{equation*}
\end{lem}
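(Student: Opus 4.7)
The plan is to reduce the statement to an application of the enumeration lemma (Lemma \ref{lem:enumerationLemmaProjections}) and then transfer the resulting bound back to oracle $A$ via Lemma \ref{lem:oracles}(iv). First I would invoke Lemma \ref{lem:oracles} relative to $A$ with $z = x$, precision $r$, and
\[
\eta r := K^A_t(x) + \sigma(r-t) - 3\varepsilon r,
\]
producing an oracle $D$; set $B := A \cup D$. By part (i) of that lemma, $K^B_s(x) = \min\{\eta r,\, K^A_s(x)\} + O(\log r)$ for every $s \leq r$.

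Next I would verify the hypotheses of Lemma \ref{lem:enumerationLemmaProjections}. Hypothesis (2), together with symmetry of information (Lemma \ref{lem:unichain}), yields $K^A_s(x) \geq K^A_t(x) + \sigma(s-t) - \varepsilon r - O(\log r)$ for $s \in [t, r]$; at $s = r$ this forces $K^A_r(x) \geq \eta r + 2\varepsilon r$, so $K^B_r(x) = \eta r + O(\log r)$ and condition (1) is immediate. For condition (2), take $w \in B_{2^{-t}}(x)$ with $p_e w = p_e x$ and set $s := -\log|x-w| \geq t$. Lemma \ref{lem:intersectionLemmaProjections} applied with oracle $B$ gives
\[
K^B_r(w) \geq K^B_s(x) + K^B_{r-s,\, r}(e \mid x) + O(\log r);
\]
the second term is $\geq \sigma(r-s) - O(\log r)$ via hypothesis (1), Lemma \ref{lem:oracles}(ii), and (\ref{eq:OraclesDontIncrease}), and the first term I handle by case analysis on whether $s \leq r - 2\varepsilon r/\sigma$ or not. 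In the former regime the cap in the min-formula is inactive, $K^B_s(x) \geq K^A_t(x) + \sigma(s-t) - \varepsilon r$, and summing with the $\sigma(r-s)$ contribution gives $K^B_r(w) \geq \eta r + 2\varepsilon r - O(\log r)$; in the latter regime the cap is active, $K^B_s(x) = \eta r + O(\log r)$, and we get $K^B_r(w) \geq \eta r + \sigma(r-s) - O(\log r)$. Together these match the required $\min\{\varepsilon r,\, \sigma(r-s) - \varepsilon r\}$ margin.

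With both hypotheses in hand, Lemma \ref{lem:enumerationLemmaProjections} gives $K^B_{r,r,r,t}(x \mid p_e x, e, x) \leq \tfrac{3\varepsilon}{\sigma} r + O(\log r)$. The final step is Lemma \ref{lem:oracles}(iv) (in its natural extension to a conditional with multiple precisions on the given side), which translates this bound back to oracle $A$ at a cost of $K^A_r(x) - \eta r + O(\log r)$. By symmetry of information this cost equals $K^A_{r,t}(x \mid x) - \sigma(r-t) + 3\varepsilon r + O(\log r)$, and summing yields
\[
K^A_{r,r,r,t}(x \mid p_e x, e, x) \leq K^A_{r,t}(x \mid x) - \sigma(r-t) + \left(\tfrac{3}{\sigma} + 3\right)\varepsilon r + O(\log r),
\]
which is bounded by $K^A_{r,t}(x\mid x) - \sigma(r-t) + \tfrac{7\varepsilon}{\sigma} r$ for large $r$ (using $\tfrac{3}{\sigma} + 3 \leq \tfrac{7}{\sigma}$, since $\sigma \leq 1$).

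The main obstacle is the delicate calibration of $\eta r$: it must be small enough that the intersection-lemma bound on $K^B_r(w)$ clears $\eta r$ by the prescribed margin (so condition (2) of the enumeration lemma is satisfied in both subcases), and simultaneously large enough that the translation cost $K^A_r(x) - \eta r$ matches the target $K^A_{r,t}(x \mid x) - \sigma(r-t)$ up to the permitted $O(\varepsilon r/\sigma)$ error. The choice $\eta r = K^A_t(x) + \sigma(r-t) - 3\varepsilon r$ is precisely what threads this needle.
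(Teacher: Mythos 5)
Your proposal is correct and follows essentially the same route as the paper: the same choice of $\eta r = K^A_t(x) + \sigma(r-t) - 3\ve r$, the same use of Lemma~\ref{lem:oracles} to produce $D$, verification of the two hypotheses of Lemma~\ref{lem:enumerationLemmaProjections} via Lemma~\ref{lem:intersectionLemmaProjections}, and the same translation back to oracle $A$ via Lemma~\ref{lem:oracles}(iv) and symmetry of information. The only cosmetic difference is that you organize the case split for condition (2) by the value of $s$ relative to $r - 2\ve r/\sigma$, whereas the paper splits on which branch of the $\min$ in Lemma~\ref{lem:oracles}(i) holds; both partitions yield the required margin $\eta r + \min\{\ve r, \sigma(r-s)-\ve r\}$.
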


\begin{proof}
    Let $\eta\in\mathbb{Q}_+$ be such that $\eta r = K^A_t(x) + \sigma(r-t)-3\ve r$. Note that $K(\eta \mid \ve, \sigma) = O(\log r)$. In particular, since $\sigma$ and $\ve$ are fixed rationals and $r$ is sufficiently large, $K(\eta, \ve) = O(\log r)$. Let $D=D^A(r,x, \eta)$ be the oracle of Lemma \ref{lem:oracles}, relative to $A$. Then, $K_r^{A,D}(x)\leq (\eta + \frac{\ve}{2})r$, so the first condition of Lemma \ref{lem:enumerationLemmaProjections} is satisfied relative to $(A,D)$. It remains to verify that the second condition of Lemma \ref{lem:enumerationLemmaProjections} holds. Let $w\in \R^2$ such that $p_e x=p_e w$ and $t\leq -\log \vert x - w\vert =s\leq r$. Then, by Lemma \ref{lem:intersectionLemmaProjections}, 
\begin{align*}
K_r^{A,D}(w) &\geq K_s^{A,D}(x) + K_{r-s, r}^{A,D}(e\vert x) -O(\log r)\\
&\geq K_s^{A,D}(x) + K^A_{r-s, r}(e\vert x) -O(\log r)\\
&\geq K_s^{A,D}(x) + K^{A,x}_{r-s, r}(e) -O(\log r)\\
&\geq K_s^{A,D}(x) + \sigma (r-s) -O(\log r)\\
\end{align*}

By the properties of $D$, there are two cases: either $K_s^{A,D}(x) = K^A_s(x)+O(\log r)$ or $K_s^{A,D}(x) = \eta r+O(\log r)$. In the first case, we have 
\begin{align*}
K_r^{A,D}(w) &\geq K^A_s(x) + \sigma (r-s) -O(\log r)\\
 &\geq K^A_t(x) + K^A_{s, t}(x\vert x) + \sigma (r-s) -O(\log r)\\
  &\geq K^A_t(x) + \sigma(s-t) + \sigma (r-s) -\ve r -O(\log r)\\
 &= K^A_t(x)  + \sigma (r-t) -\ve r -O(\log r)\\
&= (\eta +2\ve)r -O(\log r)\\
&\geq (\eta +\ve)r\\
\end{align*}
In the second case, we have
\begin{align*}
    K_r^{A,D}(w) &\geq \eta r + \sigma (r-s) -O(\log r)\\
    &\geq \eta r + \sigma (r-s) -\ve r\\
\end{align*}
Therefore, the second condition of Lemma \ref{lem:enumerationLemmaProjections} is satisfied, and so we conclude
\begin{equation*}
K^{A,D}_{r, r, r,t}(x\vert p_e x, e,x) \leq \frac{3\ve}{\sigma}r + K(\ve, \eta) +O(\log r)
\end{equation*}
Then by Lemma \ref{lem:oracles}, and the symmetry of information, we have
\begin{align*}
    K^{A}_{r, r, r, t}(x\vert p_ex, e, x) &\leq  K^{A,D}_{r, r, r, t}(x\vert p_ex, e, x) + K_r^A(x) - \eta r + O(\log r)\\
    &\leq \frac{3\ve}{\sigma}r + K(\ve, \eta) + K_r^A(x) - \eta r + O(\log r)\\
    &\leq K^A_{r,t}(x\mid x) - \sigma (r-t) + 3\ve r +\frac{3\ve}{\sigma}r + O(\log r)\\
    &\leq K_{r, t}^A(x\vert x) - \sigma (r-t) + \frac{7\ve}{\sigma}r \\
\end{align*}
\end{proof}

Now, we state the corresponding lemma for the second kind of interval, and prove it (in an almost identical manner).

\begin{lem}\label{lem:almostSigmaTealBound}
    Let $x\in\mathbb{R}^2$, $e\in S^1$, $\ve\in\mathbb{Q}^+$, $C\in\mathbb{N}$, $\sigma\in(0, 1]\cap\mathbb{Q}_+$, $A\subseteq\mathbb{N}$, and $t<r\in\mathbb{N}$ be given. For sufficiently large $r$, if
    \begin{enumerate}
        \item $K_s^{A, x}(e)\geq \sigma s - C \log r$ for every $s \leq r - t$, and
        \item $K^A_{r, s}(x)\leq \sigma(r-s) + \ve r$, for every $t \leq s \leq r$.
    \end{enumerate}
Then, 
    \begin{equation*}
    K^A_{r, r, r, t}(x\vert p_e x, e, x)\leq \frac{7\ve}{\sigma} r
    \end{equation*}
\end{lem}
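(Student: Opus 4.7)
The plan is to follow the blueprint of Lemma \ref{lem:almostSigmaYellowBound}, with the role of the (lower) growth hypothesis there replaced here by the symmetric consequence of the new upper-bound hypothesis. Applying symmetry of information (Lemma \ref{lem:unichain}) to hypothesis~(2) yields
\[
K^A_s(x) \geq K^A_r(x) - \sigma(r-s) - \ve r - O(\log r) \quad \text{for all } s \in [t, r].
\]
Because the growth of $K^A(x)$ is only controlled from above, the natural threshold must be anchored to $K^A_r(x)$ itself, rather than to $K^A_t(x)$ as in the yellow proof. I therefore set $\eta r := K^A_r(x) - 3\ve r$ and invoke the oracle $D = D^A(r, x, \eta)$ of Lemma \ref{lem:oracles}, which by property~(i) gives $K^{A,D}_r(x) = \eta r + O(\log r)$, immediately verifying condition~(1) of Lemma \ref{lem:enumerationLemmaProjections} applied relative to $(A, D)$ with parameter $\ve$.

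For condition~(2), fix $w \in B_{2^{-t}}(x)$ with $p_e w = p_e x$ and let $s := -\log \vert x - w \vert \in [t, r]$. Combining Lemma \ref{lem:intersectionLemmaProjections} with properties (i) and (ii) of Lemma \ref{lem:oracles}, inequality (\ref{eq:OraclesDontIncrease}), and hypothesis~(1) (which applies since $r - s \leq r - t$) gives
\[
K^{A,D}_r(w) \geq K^{A,D}_s(x) + \sigma(r-s) - O(\log r).
\]
Mirroring the yellow proof, I split on the two values of $K^{A,D}_s(x)$ permitted by oracle property~(i). If $K^{A,D}_s(x) = \eta r + O(\log r)$, then $K^{A,D}_r(w) \geq \eta r + \sigma(r-s) - O(\log r)$, which exceeds $\min\{\ve r, \sigma(r-s) - \ve r\}$ for $r$ large. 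If instead $K^{A,D}_s(x) = K^A_s(x) + O(\log r)$, the symmetric lower bound displayed above gives
\[
K^{A,D}_r(w) \geq K^A_r(x) - \ve r - O(\log r) = \eta r + 2\ve r - O(\log r),
\]
again sufficient for both subcases.

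With both hypotheses verified, Lemma \ref{lem:enumerationLemmaProjections} yields $K^{A,D}_{r,r,r,t}(x \mid p_e x, e, x) \leq \frac{3\ve}{\sigma} r + O(\log r)$. Translating back to the unrelativized complexity via property~(iv) of Lemma \ref{lem:oracles} costs exactly $K^A_r(x) - \eta r = 3\ve r$, producing
\[
K^A_{r,r,r,t}(x \mid p_e x, e, x) \leq \frac{3\ve}{\sigma} r + 3\ve r + O(\log r) \leq \frac{7\ve}{\sigma} r
\]
for $r$ sufficiently large, since $\sigma \leq 1$ implies $\frac{3\ve}{\sigma} + 3\ve = \frac{3\ve(1 + \sigma)}{\sigma} \leq \frac{6\ve}{\sigma}$. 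The only real departure from the yellow argument is the redefinition of $\eta$ in terms of $K^A_r(x)$, which forces one to derive the lower bound on $K^A_s(x)$ from the upper-bound hypothesis via symmetry rather than read it off directly; all $O(\log r)$ error terms from symmetry, the projection lemma, and the oracle properties are absorbed into the $\ve r$ slack for $r$ sufficiently large.
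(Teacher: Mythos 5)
Your proof is correct and takes essentially the same route as the paper: the same choice $\eta r = K^A_r(x) - 3\ve r$, the same oracle $D$, the same case split on $K^{A,D}_s(x)$, and the same use of Lemma~\ref{lem:oracles}(iv) to translate back. The only cosmetic difference is that you first restate hypothesis~(2) as a lower bound on $K^A_s(x)$ via symmetry of information and then substitute, whereas the paper carries $K^A_{r,s}(x\mid x)$ through the inequality chain and applies symmetry at the end; these are algebraically identical.
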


\begin{proof}
    Pick $\eta\in\mathbb{Q}_+$ be such that $\eta r = K^A_r(x) -3\ve r$. We again note that $K(\eta, \ve) = O(\log r)$. Let $D=D^A(r,x, \eta)$ be the oracle of Lemma \ref{lem:oracles}, relative to $A$. Then, $K_r^{A,D}(x)\leq (\eta + \frac{\ve}{2})r$, so the first condition of Lemma \ref{lem:enumerationLemmaProjections} is satisfied relative to $(A,D)$. It remains to verify the second. Let $w\in \R^2$ such that $p_e x=p_e w$ and $t\leq -\log \vert x - w\vert =s\leq r$. Then, by Lemma \ref{lem:intersectionLemmaProjections}, 
\begin{align*}
K_r^{A,D}(w) &\geq K_s^{A,D}(x) + K_{r-s, r}^{A,D}(e\vert x) -O(\log r)\\
&\geq K_s^{A,D}(x) + \sigma (r-s) -O(\log r)\\
\end{align*}

By the properties of $D$, there are two cases: either $K_s^{A,D}(x) = K^A_s(x)+O(\log r)$ or $K_s^{A,D}(x) = \eta r+O(\log r)$. In the first case, we have 
\begin{align*}
K_r^{A,D}(w) &\geq K^A_s(x) + \sigma (r-s) -O(\log r)\\
 &\geq K^A_s(x) + K^A_{r, s}(x\vert x) -\ve r-O(\log r)\\
 &= K^A_r(x)  -\ve r -O(\log r)\\
&= (\eta +2\ve)r -O(\log r)\\
&\geq (\eta +\ve)r\\
\end{align*}
In the second case, we have
\begin{align*}
    K_r^{A,D}(w) &\geq \eta r + \sigma (r-s) -O(\log r)\\
    &\geq \eta r + \sigma (r-s) -\ve r\\
\end{align*}
Therefore, the second condition of Lemma \ref{lem:enumerationLemmaProjections} is satisfied, and so
\begin{equation*}
K^{A,D}_{r, r, r,t}(x\vert p_e x, e,x) \leq \frac{3\ve}{\sigma}r + K(\ve, \eta) +O(\log r)
\end{equation*}
Then by Lemma \ref{lem:oracles}, and the symmetry of information, we have
\begin{align*}
    K^{A}_{r, r, r, t}(x\vert p_ex, e, x) &\leq  K^{A,D}_{r, r, r, t}(x\vert p_ex, e, x) + K_r^A(x) - \eta r + O(\log r)\\
    &\leq \frac{3\ve}{\sigma}r + K(\ve, \eta) + K_r^A(x) - \eta r + O(\log r)\\
    &\leq 3\ve r +\frac{3\ve}{\sigma}r + O(\log r)\\
    &\leq \frac{7\ve}{\sigma}r.
\end{align*}
\end{proof}

In addition to the previous lemmas, there is another bound which can be useful. In the setting of \cite{FieStu23} and \cite{Stull22c}, since $\sigma=1$, the bound
\begin{equation*}
    K^{A}_{r,r,r,t}(x\mid p_e x, e,x) \leq \max \{K_{r, t}^A(x) - \sigma (r-t), 0\}\leq r-t
\end{equation*}
held automatically on appropriate intervals. We can recover the same bound in the setting $\sigma<1$ with the following lemma.

\begin{lem}\label{lem:oneBoundProjections}
    Let $A\subseteq\N$, $x\in\R^2, e\in\mathcal{S}^1$, and $a \leq b \in \N$. If $b$ is sufficiently large, then
\begin{center}
$K^{A}_{b,b,b,a}(x\mid p_e x, e,x) \leq b-a + O(\log b) $.
\end{center}
\end{lem}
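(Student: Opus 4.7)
The plan is to describe an algorithm that, from rational approximations $p\in B_{2^{-a}}(x)\cap\Q^2$, $q\in B_{2^{-b}}(p_e x)\cap\Q$, and $f\in B_{2^{-b}}(e)\cap\Q^2$, together with an advice string of length $(b-a)+O(\log b)$, outputs some rational point in $B_{2^{-b}}(x)\cap\Q^2$. The underlying geometric picture is that knowing $p_e x$ and $e$ to precision $b$ confines $x$ to a strip of width $O(2^{-b})$ perpendicular to $e$, and knowing $x$ to precision $a$ confines $x$ to a disk of radius $O(2^{-a})$. The intersection is essentially a rectangle of dimensions $O(2^{-a})\times O(2^{-b})$, which can be covered by $O(2^{b-a})$ rational balls of radius $2^{-b}$, so the correct ball can be identified using $(b-a)+O(1)$ bits.

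To make this precise, I would first observe that a bound $M$ with $|x|\leq M$ can be extracted from $p$ alone (take $M=|p|+1$). Using this together with $|p_e x-p_f x|\leq|x|\cdot|e-f|$, I would argue that $x$ lies in the tube
\[
T=\bigl\{w\in\R^2\,:\,|w-p|\leq 2^{-a+1},\ |p_f w-q|\leq (M+2)\,2^{-b}\bigr\}.
\]
In an orthonormal basis one of whose directions is $f/|f|$, the tube $T$ is contained in an axis-aligned rectangle of side lengths $O(2^{-a})$ and $O(M\cdot 2^{-b})$, so a direct packing argument produces a covering $B_1,\dots,B_N$ of $T$ by rational balls of radius $2^{-b}$ with $N=O(M\cdot 2^{b-a})$. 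This covering is computable uniformly from $(p,q,f,a,b)$.

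Since $x\in T$, there exists $j\leq N$ such that the center of $B_j$ lies in $B_{2^{-b}}(x)\cap\Q^2$. The algorithm takes $(p,q,f)$ as conditional input and an advice string encoding $(a,b,j)$; it reconstructs the covering and outputs the center of $B_j$. The length of the advice is $\lceil\log_2 N\rceil+O(\log a+\log b)=(b-a)+O(\log M)+O(\log b)$, and since $x$ is fixed we have $\log M=O(1)$, giving the desired bound
\[
K^{A}_{b,b,b,a}(x\mid p_e x,e,x)\leq (b-a)+O(\log b).
\]
No serious obstacle is expected beyond bookkeeping; the only point requiring care is that replacing $e$ by its approximation $f$ enlarges the strip by only $O(2^{-b})$, which is absorbed into the implicit constants.
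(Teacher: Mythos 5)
Your argument is correct, and it takes a genuinely different route from the paper's. You prove the bound by a direct covering argument: you observe that the conditioning confines $x$ to an approximate rectangle of dimensions $O(2^{-a})\times O(2^{-b})$, cover it computably by $O(2^{b-a})$ balls of radius $2^{-b}$, and spend $(b-a)+O(\log b)$ bits of advice to name the ball containing $x$. The paper instead uses the orthonormal decomposition $x=(p_e x)e+(p_{e^\perp}x)e^\perp$: it observes that $e^\perp$ is computable from $e$, that the only missing coordinate is $p_{e^\perp}x$, that $p_{e^\perp}x$ is already known to precision about $a$ from the conditioning, and that since $p_{e^\perp}x$ is a single real number, $K^A_{b,a}(p_{e^\perp}x\mid p_{e^\perp}x)\leq b-a+O(\log b)$ by the general growth bound for $\R$. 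Both proofs hinge on the same fact (the unknown degree of freedom perpendicular to $e$ is one-dimensional), but yours is more elementary and self-contained while the paper's is shorter and more modular, deferring entirely to the general 1D complexity-growth estimate. One small point worth noting: in your accounting you correctly observe that the extra $O(\log M)$ term is absorbed into the implied constant because $x$ is fixed; this is consistent with the paper's convention, where the $O(\cdot)$ constants may depend on $x$, $e$, and $A$ but not on $a,b$.
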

\begin{proof}
The proof follows from three simple observations:
\begin{itemize}
    \item $e^\perp$ is easily computable to precision $b$ given $e$ to precision $b$,
    \item $x= (p_e x) e + (p_{e^\perp} x) e^\perp$ is easily computable to precision $b$ given these four objects to precision $b$. 
    \item $p_{e^\perp} x$ is easily computable to precision $a<b$ given $e^\perp$ to precision $b$ and $x$ to precision $a$
\end{itemize}
Applying these observations in order, have
\begin{align*}
K^{A}_{b,b,b,a}(x\mid p_e x, e,x) &\leq K^{A}_{b,b,b,b,a}(x\mid p_e x, e, e^\perp, x)+ O(\log b)\\
&\leq K^A_{b,b,a}((p_{e^\perp} x) \mid e^\perp, x) + O(\log b)\\
&\leq K^A_{b, a}(p_{e^\perp} x\mid p_{e^\perp} x) + O(\log b).
\end{align*}
Since $p_{e^\perp} x$ is a real number, the conclusion follows.

\end{proof}

\subsection{Admissible partitions}\label{ssec:admissiblePartitions}
For the remainder of this section, we fix a point $x\in\R^2$ and oracle $A\subseteq \N$. First, after \cite{Stull22c}, we extend the complexity function $K_s^A(x)$ to real numbers via $f:\R_+ \rightarrow \R_+$, the piece-wise linear function such that $f(x) = K^A_s(x)$ on the integers such that 
\begin{center}
$f(a) = f(\lfloor a\rfloor) + (a -\lfloor a\rfloor)(f(\lceil a \rceil) - f(\lfloor a\rfloor)) $,
\end{center}
for any non-integer $a$. Now, we can introduce several kinds of intervals corresponding to or identical to those in \cite{Stull22c}.
\begin{itemize}
\item An interval $[a, b]$ is $\boldsymbol{\sigma}$\textbf{-teal} if for every $s\in [a, b]$, $f(b)-f(s)\leq \sigma (b-s)$.
\item An interval $[a, b]$ is $\boldsymbol{\sigma}$\textbf{-yellow} if for every $s\in [a, b]$, $f(s)-f(a)\geq \sigma (s-a)$.
\item An interval $[a, b]$ is $\boldsymbol{\sigma}$\textbf{-green} if it is both $\sigma$-teal and $\sigma$-yellow, and $b-a\leq t$. 
\item An interval $[a, b]$ is \textbf{red} if $f(d) - f(c) > d -c$ for every $a\leq c < d\leq b$.
\item An interval $[a, b]$ is \textbf{blue} if $f$ is constant on $[a, b]$.
\end{itemize}

Intervals of these types have desirable properties. In particular, using Lemmas \ref{lem:almostSigmaYellowBound}, \ref{lem:almostSigmaTealBound} and \ref{lem:oneBoundProjections} we are able to give tight bounds on the complexity $K^A(x\mid p_e x, e, x)$. 
\begin{prop}\label{prop:projectionYellowTeal}
Let $A\subseteq\N$, $x\in\R^2, e\in\mathcal{S}^1, \sigma\in(0,1]\cap\mathbb{Q}_+, \ve>0$, and $a<b\in\R_+$. Suppose that $b$ is sufficiently large and $K^{A,x}_t(e) \geq \sigma t - O(\log b)$, for all $t \leq b-a$. Then the following hold.
\begin{enumerate}
\item If $[a,b]$ is $\sigma$-yellow, 
\begin{center}
$K^{A}_{b,b,b,a}(x\mid p_e x, e,x) \leq \min\{K^A_{b,a}(x\mid x) - \sigma(b-a), b-a\} + \ve b $.
\end{center}
\item If $[a,b]$ is $\sigma$-teal, 
\begin{center}
$K^A_{b,b,b,a}(x\mid p_e x, e,x) \leq \ve b$.
\end{center}
\end{enumerate}
\end{prop}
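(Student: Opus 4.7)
The plan is to reduce both parts of Proposition \ref{prop:projectionYellowTeal} to the earlier lemmas. Part (1) will come from combining Lemma \ref{lem:almostSigmaYellowBound} (which supplies the $K^A_{b,a}(x\mid x) - \sigma(b-a)$ term) with Lemma \ref{lem:oneBoundProjections} (which supplies the $b-a$ term), and then taking the minimum. Part (2) will come directly from Lemma \ref{lem:almostSigmaTealBound}. In both cases the job reduces to translating the geometric hypothesis on the interval $[a,b]$ (phrased in terms of the piecewise-linear extension $f$ of $K^A_\cdot(x)$) into the complexity hypotheses required by the two lemmas. Before anything else, replace $\ve$ by $\ve' := \sigma \ve / 7$ (a rational, with $K(\ve') = O(\log b)$) so that the factor $7\ve'/\sigma$ appearing in the conclusions of Lemmas \ref{lem:almostSigmaYellowBound} and \ref{lem:almostSigmaTealBound} collapses to $\ve b$, and round $a,b$ to integers (the extension $f$ makes this cost only $O(1)$).

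For the yellow case, the hypothesis $[a,b]$ is $\sigma$-yellow gives $K^A_s(x) - K^A_a(x) \geq \sigma(s-a)$ for every integer $a \leq s \leq b$. By symmetry of information (Lemma \ref{lem:unichain}(ii)) this is $K^A_{s,a}(x \mid x) \geq \sigma(s-a) - O(\log b)$, which, for $b$ large enough, yields the second hypothesis of Lemma \ref{lem:almostSigmaYellowBound} with $r=b$, $t=a$. The first hypothesis is immediate from the assumption $K^{A,x}_t(e) \geq \sigma t - O(\log b)$ for $t \leq b-a$. Applying Lemma \ref{lem:almostSigmaYellowBound} then gives
\begin{equation*}
K^A_{b,b,b,a}(x \mid p_e x, e, x) \leq K^A_{b,a}(x \mid x) - \sigma(b-a) + \ve b.
\end{equation*}
Lemma \ref{lem:oneBoundProjections} independently gives $K^A_{b,b,b,a}(x \mid p_e x, e, x) \leq b-a + O(\log b) \leq b-a + \ve b$, and taking the minimum yields part (1).

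For the teal case, the hypothesis that $[a,b]$ is $\sigma$-teal gives $K^A_b(x) - K^A_s(x) \leq \sigma(b-s)$ for every integer $a \leq s \leq b$. Again by symmetry of information this converts to $K^A_{b,s}(x \mid x) \leq \sigma(b-s) + O(\log b)$, which is the second hypothesis of Lemma \ref{lem:almostSigmaTealBound} for $b$ large. The first hypothesis is the same assumption on the complexity of $e$ as before. Applying Lemma \ref{lem:almostSigmaTealBound} directly yields $K^A_{b,b,b,a}(x \mid p_e x, e, x) \leq \ve b$, proving part (2). The only real subtlety is the symmetry-of-information step: the conditions in Lemmas \ref{lem:almostSigmaYellowBound} and \ref{lem:almostSigmaTealBound} are phrased with the error term $\ve r$, so the $O(\log b)$ error from Lemma \ref{lem:unichain} must be absorbed into the pre-chosen $\ve b$, which is exactly why we require $b$ to be sufficiently large at the outset.
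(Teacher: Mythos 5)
Your argument is correct and is exactly the intended derivation: the paper itself gives no details, stating only that the claim follows from Lemmas \ref{lem:almostSigmaYellowBound}, \ref{lem:almostSigmaTealBound}, and \ref{lem:oneBoundProjections}. You have correctly supplied the missing bookkeeping — rescaling $\ve$ to absorb the $7\ve/\sigma$ factor, using symmetry of information (Lemma \ref{lem:unichain}) to convert the $\sigma$-yellow/teal conditions on $f = K^A_\cdot(x)$ into the two-precision complexity hypotheses of the lemmas, and taking the minimum with the trivial bound from Lemma \ref{lem:oneBoundProjections} for part (1).
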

\begin{proof}
The claim follows from Lemmas \ref{lem:almostSigmaTealBound}, \ref{lem:almostSigmaYellowBound} and \ref{lem:oneBoundProjections}.
\end{proof}

Our goal is to partition the interval $[1, r]$ into teal and yellow intervals, and sum the contributions of each using the symmetry of information. To accomplish all of this, we define an $(M, r, t, \sigma)$\textbf{-admissible} partition to be a partition $[a_i, a_{i+1}]_{i=0}^k$ of $[1, r]$ such that:
    \begin{enumerate}
        \item $k<M$
        \item Every interval in the partition is either $\sigma$-yellow or $\sigma$-teal (or both).
        \item No interval has length more than $t$. 
    \end{enumerate}

\begin{lem}\label{lem:partitionProjectionSum}
Suppose that $A\subseteq\N$, $x \in \R^2$, $e \in \mathcal{S}^1$, $\sigma\in (0,1]\cap\mathbb{Q}_+$, $\ve>0$, $C\in\N$, $t, r \in \N$ satisfy
\begin{enumerate}
    \item $\dim^A(x) > \sigma$,
    \item $K^{A,x}_s(e) \geq \sigma s - C\log s$ for all $s\leq t$.
\end{enumerate}
If $\mathcal{P} = \{[a_i, a_{i+1}]\}_{i=0}^k$ is an  $(10C,r,t, \sigma)$-admissible partition, and $r$ is sufficiently large, then
\begin{align*}
K^A_{r}(x \mid p_e x, e) &\leq \ve r + \sum\limits_{i\in \textbf{Bad}} \min\{K^A_{a_{i+1}, a_{i}}(x \mid x) - \sigma (a_{i+1} - a_i), a_{i+1} - a_i\},
\end{align*}
where
\begin{center}
\textbf{Bad} $=\{i\leq k\mid [a_i, a_{i+1}] \notin T_\sigma\}$.
\end{center}
\end{lem}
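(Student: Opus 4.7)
The plan is to bound $K^A_r(x \mid p_e x, e)$ by telescoping across the intervals of $\mathcal{P}$ and applying Proposition \ref{prop:projectionYellowTeal} on each piece. The key identity to set up is a chain-rule bound of the shape
\begin{equation*}
K^A_r(x \mid p_e x, e) \;\leq\; \sum_{i=0}^{k} K^A_{a_{i+1},a_{i+1},a_{i+1},a_i}(x \mid p_e x, e, x) + O(k \log r),
\end{equation*}
which I would derive by iterating a conditional version of Lemma \ref{lem:unichain}(ii): relativizing to the oracles encoding $p_e x$ and $e$ to precision $r$, the increment from precision $a_i$ to precision $a_{i+1}$ in computing $x$ costs at most $K^A_{a_{i+1},\ldots,a_i}(x \mid p_e x, e, x) + O(\log r)$. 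The base term $K^A_{a_0}(x) = O(1)$ since $a_0 = 1$, and since $k+1 \leq 10C$ is bounded independently of $r$, the aggregate error is $O(\log r)$.

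Next, I would fix a small $\ve' = \ve/(40C)$ and apply Proposition \ref{prop:projectionYellowTeal} to each interval $[a_i, a_{i+1}]$. The hypothesis $K^{A,x}_s(e) \geq \sigma s - C\log s$ combined with admissibility (so that $a_{i+1} - a_i \leq t$) ensures the required lower bound $K^{A,x}_s(e) \geq \sigma s - O(\log a_{i+1})$ holds on each piece. The proposition then yields: on a $\sigma$-teal interval, the summand is at most $\ve' a_{i+1} \leq \ve' r$; on a non-teal interval (which by admissibility must be $\sigma$-yellow, hence $i \in \mathbf{Bad}$), the summand is at most $\min\{K^A_{a_{i+1},a_i}(x\mid x) - \sigma(a_{i+1}-a_i),\; a_{i+1}-a_i\} + \ve' r$.

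Summing and using $k+1 \leq 10C$, the total teal contribution and the $\ve' r$ errors from yellow intervals add up to at most $(10C+1)\ve' r \leq \ve r/2$, while the telescoping error is negligible, yielding the claimed bound. A small wrinkle is that Proposition \ref{prop:projectionYellowTeal} requires $a_{i+1}$ to be sufficiently large, which may fail for the very first intervals of the partition; since there are at most $10C$ such intervals, I would handle them separately using the trivial bound $K^A_{a_{i+1},\ldots,a_i}(x \mid p_e x, e, x) \leq 2(a_{i+1}-a_i) + O(\log r)$ from Lemma \ref{lem:oneBoundProjections}, which contributes only $O(1)$ provided those intervals have bounded length (and if not, one of them is already large enough to apply the proposition directly).

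The main obstacle I anticipate is verifying the conditional chain rule in the first step with precisions on $p_e x$ and $e$ growing along with $x$. Ordinary symmetry of information gives the chain rule for unconditional complexities, and its relativized form must be applied carefully so that the conditioning information $p_e x$ and $e$ is accessed at precision $a_{i+1}$ (not just $a_i$) on each step; this is what allows Proposition \ref{prop:projectionYellowTeal} to be invoked in the form stated, since the proposition is built around the hypothesis that $p_e x$ and $e$ are known to the \emph{current} high precision $b = a_{i+1}$.
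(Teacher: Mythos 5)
Your proposal is correct and follows essentially the same route as the paper: telescope $K^A_r(x\mid p_e x, e)$ across the admissible partition via (relativized) symmetry of information, apply Proposition~\ref{prop:projectionYellowTeal} with a rescaled $\ve'$ on each $\sigma$-yellow or $\sigma$-teal piece, and absorb the small-precision prefix where the proposition does not yet apply. The paper handles that prefix by merging every interval with right endpoint below $\log r$ into one block $[1,a_j]$ and discarding its $O(\log r)$ contribution, rather than invoking Lemma~\ref{lem:oneBoundProjections} piece by piece (and note that Lemma~\ref{lem:oneBoundProjections} gives $b-a+O(\log b)$, not $2(b-a)$, and the prefix costs $O(\log r)$ rather than $O(1)$), but these are cosmetic differences that do not change the argument.
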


\begin{proof}
We assume $r$ is large enough so that, for any $b>\log r$, the conditions of Proposition \ref{prop:projectionYellowTeal} hold for $\frac{\varepsilon}{11 C}$. Starting with $\mathcal{P} $ a $(10C,r,t, \sigma)$-admissible partition, suppose $j$ is the largest integer such that $a_j<\log r$. Remove the intervals $[1, a_1], ...[a_{j-1}, a_j]$ from $\mathcal{P}$ and add $[1, a_j]$, still calling the modified partition $\mathcal{P}$ and re-indexing it for ease of notation. Applying Proposition \ref{prop:projectionYellowTeal}  we see that on the teal intervals of $\mathcal{P}$, 
\begin{equation*}
K^A_{a_{i+1},a_{i+1},a_{i+1},a_i}(x\mid p_e x, e,x) \leq \frac{\ve}{11 C} r,
\end{equation*}
and on the yellow intervals
\begin{equation*}
K^A_{a_{i+1},a_{i+1},a_{i+1},a_i}(x\mid p_e x, e,x) \leq \min\{K^A_{a_{i+1}a_i}(x\mid x) - \sigma (a_{i+1} - a_i), a_{i+1} - a_i\} + \frac{\ve}{11 C} r\\
\end{equation*}

Applying symmetry of information no more than $10 C$ times, we obtain
\begin{align*}
    K^A_{r}(x \mid p_e x, e) &\leq \sum\limits_{ \mathcal{P}} K^A_{a_{i+1}, a_{i+1}, a_{i+1}, a_i}(x \mid p_e x, e, x) + O(\log r)\\
    &\leq \sum\limits_{ \mathcal{P}\setminus [1, a_1]} K^A_{a_{i+1}, a_{i+1}, a_{i+1}, a_i}(x \mid p_e x, e, x) + O(\log r)\\
    &\leq \sum\limits_{i\in\textbf{Teal}} K^A_{a_{i+1}, a_{i+1}, a_{i+1}, a_i}(x \mid p_e x, e, x) \\
    &\qquad +\sum\limits_{i\in\textbf{Bad}} K^A_{a_{i+1}, a_{i+1}, a_{i+1}, a_i}(x \mid p_e x, e, x)+ O(\log r)\\
    &\leq \sum\limits_{i\in\textbf{Bad}} \min\{K^A_{a_{i+1}, a_{i}}(x \mid x) - \sigma (a_{i+1} - a_i), a_{i+1} - a_i\}+\frac{\ve}{11C} \\
    &\qquad  +\sum\limits_{i\in\textbf{Teal}} \frac{\ve}{11 C} r+ O(\log r)\\
    &\leq \sum\limits_{i\in \textbf{Bad}} \min\{K^A_{a_{i+1}, a_{i}}(x \mid x) - \sigma (a_{i+1} - a_i), a_{i+1} - a_i\}+ \frac{10\ve}{11} r + O(\log r).\\
    &\leq \sum\limits_{i\in \textbf{Bad}} \min\{K^A_{a_{i+1}, a_{i}}(x \mid x) - \sigma (a_{i+1} - a_i), a_{i+1} - a_i\}+ \ve r.
\end{align*}
\end{proof}

 An immediate consequence of the above is that, if $B$ is the total length of the bad intervals, 
\begin{equation*}
    K^A_{r}(x \mid p_e x, e) \leq B + \ve r
\end{equation*}

In the proof of Theorem \ref{thm:mainThmEffDim}, we will need a special partition. Specifically, we need a partition which will maximize the proportion of $[1, r]$ covered by of $\sigma$-green intervals. 
\begin{lem}\label{lem:redGreenBluePartitionProjections}
    Let $x\in\mathbb{R}^2, A\subseteq\mathbb{N}$, $\sigma\in(0, 1]\cap \mathbb{Q}$ and $t, r\in\mathbb{N}$ be given. Then there exists a partition $\mathcal{P}$ of $[1, r]$ with the following properties:

    \begin{itemize}
        \item Every interval in $\mathcal{P}$ is red, $\sigma$-green, or blue
        \item $\mathcal{P}$ maximizes the amount of $\sigma$-green, in the sense that, if $s\in[1, r]$ is contained in a $\sigma$-green interval, then $s$ is contained in a $\sigma$-green interval in $\mathcal{P}$.
        \item If $[r_i, r_{i+1}]$ is red, $[r_{i+1}, r_{i+2}]$ cannot be blue. 
        \item The green interval(s) in every red--green--blue sequence of $\mathcal{P}$ have (total) length at least $t$. 
    \end{itemize}

\end{lem}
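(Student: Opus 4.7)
\emph{Proof proposal.} My plan is to construct $\mathcal{P}$ explicitly in four stages, throughout exploiting the fact that $f$ is piecewise linear with finitely many break-points on $[1,r]$, so that only finitely many candidate subintervals need to be considered.

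First, I would define $G^{*} \subseteq [1, r]$ to be the set of all points contained in some $\sigma$-green subinterval of $[1, r]$, and decompose $G^{*}$ into its maximal closed connected components $J_{1}, \ldots, J_{m}$. A direct verification shows that each $J_{j}$ is itself both $\sigma$-yellow and $\sigma$-teal: the defining chord inequalities on $J_{j}$ chain along any covering sequence of overlapping $\sigma$-green subintervals of $J_{j}$, and the resulting telescoping bounds give exactly the yellow and teal inequalities at each internal point of $J_{j}$.

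Second, I would partition each $J_{j}$ into consecutive $\sigma$-green subintervals of length at most $t$ by a greedy left-to-right sweep: starting at the left endpoint, take the longest prefix that is $\sigma$-green and of length at most $t$, then iterate on the remainder. Because every point of $J_{j}$ lies in at least one $\sigma$-green interval inside $J_{j}$, the greedy prefix never stalls, so this produces a finite cover of $J_{j}$ by green intervals, and every greenable point ends up inside a green interval of $\mathcal{P}$, which is exactly the maximality-of-green requirement.

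Third, I would partition the complement $V := [1, r] \setminus G^{*}$ into red and blue pieces at the break-points of $f$. The key claim is that on each resulting sub-piece the slope must be either strictly greater than $1$ (giving a red piece) or exactly $0$ (giving a blue piece): any intermediate slope would, by taking a sufficiently short neighborhood and possibly extending it into an adjacent high- or low-slope region, place the point inside some $\sigma$-green interval, contradicting $s \notin G^{*}$. Adjacent red (respectively blue) sub-pieces may then be merged into maximal intervals of the partition.

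Fourth, I would enforce the ordering and length conditions. A red interval directly adjacent to a blue interval in $\mathcal{P}$ would force $f$ to jump from slope $>1$ to slope $0$ at a single break-point, and a short two-sided neighborhood of that break-point would then be both $\sigma$-yellow and $\sigma$-teal, placing it in $G^{*}$ and contradicting direct adjacency. The remaining condition, that the green block in any red--green--blue sequence has total length at least $t$, is the main expected obstacle: I would prove it by quantifying the minimum precision span over which $f$ can transition from slope $>1$ to slope $0$ while simultaneously satisfying the yellow inequality along the red-facing side and the teal inequality along the blue-facing side, which should reduce to a direct computation with the defining chord inequalities together with the length-at-most-$t$ bound used in the greedy step.
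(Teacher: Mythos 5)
Your construction parallels the paper's: both first decompose the greenable locus $G^*$ into disjoint $\sigma$-green intervals, then color the complement, then verify the ordering and length conditions. The paper does the first part by a merge-and-trim greedy on the family of all $\sigma$-green intervals; you instead take the maximal connected components of $G^*$ and observe that each component is $\sigma$-yellow and $\sigma$-teal. That observation is correct and is a tidy way to organize the argument (the cleanest justification is via the auxiliary function $g(s) := f(s) - \sigma s$: a component $[A,B]$ is precisely an interval on which $g(A)=g(B)=\min_{[A,B]}g$, and every interior point lies between two such minimizers at distance $\le t$), and it is also exactly what makes your greedy sweep work: one has to check that each intermediate endpoint your sweep produces is again a global minimizer of $g$ on the component, which is more than the bare fact that ``every point lies in at least one green interval.''

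The step you flagged as the ``main expected obstacle'' is also where your proposal genuinely falls short, and where the paper takes a different route. You propose to \emph{compute} a minimum precision span needed for $f$ to transition from slope $>1$ to slope $0$; but no such slope-based bound exists --- the green block $[b,c]$ can be made arbitrarily short while $f$ still makes that transition. What forces $c-b\ge t$ is the maximality of the green block inside $G^*$, not any local slope dynamics. The paper exploits this with a short contradiction: if $c-b<t$, the red interval on the left gives $f(b-\delta_1) < f(b) - \delta_1$, the blue interval on the right gives $f(c+\delta_2)=f(c)$, and one checks that for $\delta_1,\delta_2$ small and in the appropriate fixed ratio the interval $[b-\delta_1,c+\delta_2]$ is again $\sigma$-green of length $\le t$, contradicting that $b$ is the left endpoint of a component of $G^*$. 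You should replace your quantitative plan with this contradiction; as written your plan would not produce the bound. Separately, your third step asserts that every point of $[1,r]\setminus G^*$ sees slope either strictly greater than $1$ or exactly $0$; this is not literally true as stated (consider $f$ of constant slope strictly between $\sigma$ and $1$: nothing is greenable, but nothing is red or blue either). The paper asserts the same red/blue dichotomy, so this is a shared imprecision rather than a defect unique to your argument, but it is worth knowing that what the downstream lemmas actually use is only that the non-green pieces split into $\sigma$-yellow and $\sigma$-teal intervals.
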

\begin{proof}
    The construction of such a partition is essentially the same as in \cite{FieStu23}; for completeness, we include the details.
    
    Given $x$, $A$, $t<r$, and $\sigma$, consider the set of all $\sigma$-green intervals. First, we refine this collection via the following greedy strategy.
    \begin{enumerate}
    
    \item Remove any $\sigma$-green interval strictly contained in another from the collection.
    \item Moving from left to right, replace the first overlapping $\sigma$-green intervals $[a, b]$ and $[c, d]$  with $[a, c]$ and $[b, d]$, noting that $[b, d]$ is $\sigma$-green. 
    \item Remove $[b, d]$ if it is now strictly contained in some interval in the remaining collection
    \item Check whether any overlapping $\sigma$-green intervals remain, and if so, repeat steps 2 and 3. Otherwise, label the remaining collection $G$ and stop.
    \end{enumerate}

The elements of $G$ have disjoint interiors, and can be added to $\mathcal{P}$. It is clear that the second property is satisfied.

To finish the proof, observe that a blue interval $[b, c]$ following a red interval $[a, b]$ implies that $b$ is contained in a $\sigma$-green interval of length at most $t$. Thus, there are only three cases for the remaining gaps in $[1, r]\setminus G$: a gap is red, a gap is blue, or a gap is the union of a blue interval and then a red interval. In each case, simply add the correct color of interval(s) to $\mathcal{P}$ to complete the partition. Now, $\mathcal{P}$ consists of only red, $\sigma$-green, and blue intervals, so the first property is satisfied, and the above observation combined with the first two properties gives the third. 

We now observe that the final property is satisfied by $\mathcal{P}$. We may reduce to the case of one $\sigma$-green interval in the sequence: there cannot be two adjacent $\sigma$-green intervals with total length less than $t$. So, it suffices to consider the red--$\sigma$-green--blue sequence $[a, b], [b, c], [c, d]$. Assume $c-b<t$. We know that $\frac{f(c)-f(b)}{c-b}=\sigma$. For all sufficiently small $\delta_1, \delta_2$,  $f(b-\delta_1)< f(b)-\delta_1$ and $f(c+\delta_2)=f(c)$. It follows that there is a $\sigma$-green interval $[a^\prime, c^\prime]$ such that $a^\prime < b$ and $c^\prime > c$, contradicting the construction of $\mathcal{P}$.
\end{proof}

\subsection{Main projection theorem}\label{ssec:mainProjTheorem}
We now use the partition construction of Lemma \ref{lem:redGreenBluePartitionProjections} to prove our main (effective) projection theorem. The proof follows from a careful analysis of this partition.
\begin{T4}
    Let $x\in \mathbb{R}^2, e\in S^1, \sigma \in \Q \cap (0,1), \ve>0, C\in\mathbb{N}, A\subseteq\mathbb{N}$, and $t<r\in \mathbb{N}$. Let $D \geq \Dim^A(x)$ and $\alpha = \min\{D, 1+\sigma\}$. Suppose that $r$ is sufficiently large and that the following hold
\begin{enumerate}
    \item $0<\sigma<\dim^A(x)\leq 1$,
    \item $t\geq \max\{\frac{\sigma(\sigma + 1 - \alpha)}{\alpha + \sigma + \sigma^2 - \sigma \alpha}r, \frac{r}{C}\}$,
    \item $K_s^{A, x}(e)\geq\sigma s - C\log s$ for all $s\leq t$
\end{enumerate}
Then
\begin{equation*}
    K_r^A(x\vert p_ex, e)\leq K^A_r(x) - \sigma r + \frac{(\alpha - \sigma)\sigma}{\alpha}(r-t) + \ve r.
\end{equation*}

\end{T4}

\begin{proof}

First, let $\mathcal{P}$ be the partition of Lemma \ref{lem:redGreenBluePartitionProjections}. Using a greedy strategy, we can partition the red and blue intervals in $\mathcal{P}$ into $\sigma$-yellow and $\sigma$-teal intervals of length at most $t$. Abusing notation, we call this partition $\mathcal{P}$. Let $S$ be the number of $\sigma$-RGB sequences in this partition. We prove the theorem by cases. 

\noindent \textbf{Case $S = 0$}: In this case, $\mathcal{P}$ is almost entirely red and $\sigma$-green. In particular, there is some constant $c$ such that for any real $s>c$, $s$ is not contained in a blue interval of $\mathcal{P}$. A simple greedy construction gives an admissible partition of $[c,r]$ consisting entirely of $\sigma$-yellow intervals (which we will refer to as $\mathcal{P}$). Then, by Lemma \ref{lem:partitionProjectionSum},
\begin{align*}
K^A_{r}(x \mid p_e x, e) &\leq \frac{\ve}{2} r + \sum\limits_{i\in \textbf{Bad}} K^A_{a_{i+1}, a_{i}}(x \mid x) - \sigma (a_{i+1} - a_i)\\
&\leq  (\alpha - \sigma) r+\ve r.
\end{align*}

\noindent \textbf{Case $S = 1$}: Suppose the green portion of our red-green-blue sequence starts at precision $r_1$ and ends at precision $r_1 + s$, for some $s \geq t$. Let $r_2 \geq r_1 + s$ be the minimal precision such that $[r_2, r]$ is the union of yellow intervals; if no such precision exists, then set $r_2 = r$. 

We first assume that $B > \frac{\sigma}{\alpha}(r-t)$, where $B$ is the total length of the non-teal intervals in $\mathcal{P}$. Then, by Lemma \ref{lem:partitionProjectionSum}, we have
\begin{align*}
    K^A_r(x\mid p_e x, e) &\leq K^A_r(x) - \sigma s - \sigma B +\ve r\\
    &< K^A_r(x) - \sigma t - \sigma \frac{\sigma}{\alpha}(r-t) +\ve r\\
    &= K^A_r(x) - \sigma r + \frac{(\alpha- \sigma)\sigma}{\alpha}(r-t) + \ve r,
\end{align*}
as required. Now suppose that $B \leq \frac{\sigma}{\alpha}(r-t)$. Note that this immediately implies that $r_1 \leq \frac{\sigma}{\alpha}(r-t)$.  Using the same argument as the $S=0$ case for the interval $[1, r_1]$ we conclude that
\begin{equation}\label{eq:Sequal1Case1}
    K^A_{r_1}(x \mid p_e x, e) \leq \left(\alpha -\sigma + \frac{\ve}{3}\right) r_1
\end{equation}
Using the same argument for the interval $[r_2, r]$, we have
\begin{equation}\label{eq:Sequal1Case2}
    K^A_{r,r,r,r_2}(x \mid p_e x, e,x) \leq K^A_{r,r_2}(x\mid x) - \sigma(r - r_2) + \frac{\ve}{3}r.
\end{equation}
Finally, since the interval $[r_1, r_2]$ is $\sigma$-green, we have
\begin{equation}\label{eq:Sequal1Case3}
    K^A_{r_2,r_2,r_2,r_1}(x \mid p_e x, e,x) \leq \frac{\ve}{3}r.
\end{equation}
Combining inequalities (\ref{eq:Sequal1Case1}), (\ref{eq:Sequal1Case2}) and (\ref{eq:Sequal1Case3}), the bound on $r_1$, and symmetry of information, yields
\begin{align*}
    K^A_r(x\mid p_e x, e) &\leq \left(\alpha-\sigma\right) r_1 + K^A_{r,r_2}(x\mid x) - \sigma(r - r_2) + \ve r\\
    &\leq \left(\alpha-\sigma\right) r_1 + K^A_r(x) - \sigma r + \ve r\\
    &\leq \left(\alpha - \sigma\right)\frac{\sigma}{\alpha}(r-t) + K^A_r(x) - \sigma r + \ve r,
\end{align*}
and the proof for the $S = 1$ case is complete.

\noindent \textbf{Case $S \geq 2$}: Let $L$ be the total length of the $\sigma$-green intervals, and $B$ be the total length of the non-teal intervals. Recall that, by Lemma \ref{lem:partitionProjectionSum}, $K^A_{r}(x \mid p_e x, e) \leq B$. We also have that
\begin{align*}
    K^A_r(x\mid p_e x, e) &\leq K^A_r(x) - \sigma L - \sigma B + \ve r\\
    &\leq K^A_r(x) - 2\sigma t - \sigma B + \ve r.
\end{align*}
Hence we see that
\begin{equation}\label{eq:Sequal2Case1}
    K^A_r(x\mid p_e x, e) \leq \min\{K^A_r(x) - \sigma B - 2\sigma t, B\} + \ve r.
\end{equation}

By the above inequality, if 
\begin{center}
    $B \leq K^A_r(x) - \sigma r + \frac{(\alpha -\sigma)\sigma}{\alpha}(r-t)$
\end{center}
the proof is complete, so we assume otherwise. In this case, using (\ref{eq:Sequal2Case1}), we see that
\begin{align*}
    K^A_r(x\mid p_e x, e) &\leq K^A_r(x) - 2\sigma t - \sigma B + \ve r\\
    &\leq (1-\sigma)K^A_r(x) + \frac{\sigma^3}{\alpha} r - \frac{2\alpha - (\alpha-\sigma)\sigma}{\alpha}\sigma t+ \ve r.
\end{align*}
Therefore, it suffices to show that
\begin{equation}
    \frac{\sigma (1 +\sigma)}{\alpha} r- \frac{\alpha - \alpha\sigma+ \sigma^2 + \sigma}{\alpha}t \leq K^A_r(x).
\end{equation}
Since $K^A_r(x) \geq \sigma r$, it suffices to show that
\begin{equation}
    \frac{\sigma (1 +\sigma)}{\alpha} r- \frac{\alpha - \alpha\sigma+ \sigma^2 + \sigma}{\alpha}t\leq \sigma r.
\end{equation}
We first note that, if $D \geq 1 + \sigma$, then this holds trivially, and the proof is complete. Otherwise, it suffices to show that
\begin{equation}
    t \geq \frac{\sigma(1+\sigma - \alpha)}{\alpha +\sigma +\sigma^2 - \sigma \alpha}r,
\end{equation}
which is true by our assumed lower bound on $t$.
\end{proof}

\section{Effective dimension of pinned distances}
In this section, we establish results on the complexity of distances. These correspond to the projection results of the previous section. 
\begin{lem}\label{lem:distanceEnumeration}
    Suppose that $B\subseteq\N$, $x, y\in\R^2$, $t<r\in\N$, $\sigma\in(0, 1]$ and $\eta, \ve\in\Q_+$ satisfy the following conditions.
\begin{itemize}
\item[\textup{(i)}]$K^B_r(y)\leq \left(\eta +\frac{\ve}{2}\right)r$.
\item[\textup{(ii)}] For every $z \in B_{2^{-t}}(y)$ such that $\vert x-y\vert = \vert x-z\vert$, \[K^B_{r}(z)\geq \eta r + \min\{\ve r, \sigma (r-s) -\ve r\}\,,\]
where $s=-\log\vert y-z\vert\leq r$.
\end{itemize}
Then,
\[K_{r,t}^{B, x}(y \mid y) \leq K^{B,x}_{r,t}( \vert x-y\vert\mid y) + \frac{3\ve}{\sigma} r + K(\ve,\eta)+O(\log r)\,.\]
\end{lem}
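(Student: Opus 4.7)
The plan is to adapt the enumeration strategy of Lemma~\ref{lem:enumerationLemmaProjections}, replacing the role of the fiber $p_e^{-1}(p_e x)$ by the circle $C:=\{q\in\R^2:|q-x|=|x-y|\}$. Just as the projection condition $p_e w=p_e x$ confines $w$ to a line, the distance condition $|x-z|=|x-y|$ confines $z$ to $C$; at scales well below $|x-y|$ the circle is locally flat, so the counting estimates used for projections carry over.

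First I would use $K^{B,x}_{r,t}(|x-y|\mid y)$ bits to recover $|x-y|$ to precision $r$. Given this distance, $y$ to precision $t$, and $B,x$ as oracles, enumerate all precision-$r$ rationals $q$ such that (a) $q\in B_{2^{-t}}(y)$, (b) $|x-q|$ and $|x-y|$ agree to precision $r$, and (c) there is a $B$-program of length at most $(\eta+\ve/2)r$ outputting a precision-$r$ rational within $B(q,2^{-r})$. By condition (i), a precision-$r$ approximation of $y$ itself appears in the enumeration.

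Next I would use (ii) to bound the size of the enumeration. For each $q$ in the enumeration, perturb radially (toward or away from $x$) to obtain a point $z\in C$ within $O(2^{-r})$ of $q$; then $z\in B_{2^{-t}}(y)$ and $|x-z|=|x-y|$, so (ii) applies. The perturbation is computable from $q$ and $x$ at precision $r$ with $O(\log r)$ extra bits, so $K^B_r(z)\leq(\eta+\ve/2)r+O(\log r)$. Combining with the lower bound from (ii) forces the second branch of the $\min$, yielding $s\geq r-(3\ve/(2\sigma))r-O(\log r)/\sigma$, i.e., $|q-y|\leq O(2^{-r+(3\ve/(2\sigma))r})$.

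The enumerated points therefore lie in an annulus of width $O(2^{-r})$ intersected with a ball of radius $O(2^{-r+(3\ve/(2\sigma))r})$ about $y$, a region containing at most $2^{(3\ve/(2\sigma))r+O(\log r)}$ precision-$r$ lattice points. Indexing the correct $q$ thus costs at most $(3\ve/(2\sigma))r+O(\log r)\leq(3\ve/\sigma)r+O(\log r)$ bits. Combined with the $K(\ve,\eta)+O(\log r)$ bits needed to describe the enumeration parameters and the $K^{B,x}_{r,t}(|x-y|\mid y)$ bits used to obtain the distance, this produces the claimed bound. The main technical obstacle is the mismatch between the exact equality demanded by (ii) and the precision-$r$ equality enforced by the enumeration; the radial perturbation above resolves it, with the minor caveat that one must arrange the oracle structure so that passing from $q$ to $z$ does not inflate $K^B_r$ (which is guaranteed whenever $x$ is accessible from $B$ up to $O(\log r)$ bits of overhead, and can always be so arranged in applications).
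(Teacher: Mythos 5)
Your proposal is correct and follows essentially the same strategy as the paper: build an oracle machine that uses $\pi_1$ (witnessing $K^{B,x}_{r,t}(\vert x-y\vert\mid y)$) and a precision-$t$ approximation of $y$ to recover $\vert x-y\vert$, search for low-complexity precision-$r$ points near $y$ at the correct distance from $x$, and apply hypothesis (ii) to a nearby point on the exact circle --- your radial perturbation --- to conclude that any match lies within $2^{-s}$ of $y$ with $r-s\lesssim \ve r/\sigma$. The paper outputs the first such match and then spends $\approx r-s$ extra bits to locate $y$ in its $2^{-s}$-neighborhood, whereas you enumerate all matches and index $y$; these are equivalent, and your closing caveat about oracle access to $x$ is actually unnecessary, since $K^{B}_r(z)\leq(\eta+\ve/2)r+O(\log r)$ already follows from $\vert z - q\vert = O(2^{-r})$ alone (any precision-$r$ rational witness for $q$ is a precision-$(r-O(1))$ witness for $z$), which is exactly how the paper handles it.
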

\begin{proof}

The proof is almost identical to that of \cite{Stull22c} (Lemma 12). 

Define an oracle Turing machine $M$ that does the following given oracle $(B,x)$,  and inputs $\pi= \pi_1\pi_2\pi_3\pi_4$ and $q\in\Q^2$ such that $U(\pi_2)=s_1\in\N$, $U(\pi_3) = s_2\in\N$ and $U(\pi_4)=(\zeta, \iota) \in\Q^2$.
				
$M$ first uses $q$ and oracle access to $x$ to compute a dyadic rational $d$ such that $\vert d - \vert x - q\vert \vert < 2^{-s_1}$. $M$ then computes $U^{B,x}(\pi_1, q) = p \in \Q$. For every program $\tau\in\{0,1\}^*$ with $\ell(\tau)\leq \iota s_2 + \frac{\zeta s_2}{2}$, in parallel, $M$ simulates $U(\tau)$. If one of the simulations halts with output $p_2 \in \Q^2 \cap B_{2^{-s_1}}(q)$ such that $\vert d - \vert x-p_2\vert \vert < 2^{-s_2}$, then $M^{A, e}$ halts with output $p_2$. Let $k_M$ be a constant for the description of $M$.
				
Let $\pi_1$, $\pi_2$, $\pi_3$, and $\pi_4$ testify to $K^{B,x}_{r,t}(\vert x-y\vert  \mid y)$, $K(t)$, $K(r)$, and $K(\ve,\eta)$, respectively, and let $\pi= \pi_1\pi_2\pi_3\pi_4$. It can be readily verified that $M^{B, x}(\pi,q)$ eventually halts, and outputs a rational $p \in \Q^2$, whenever $q\in B_{2^{-t}}(y)$.  By the definition of $M$, $p\in B_{2^{-t}}(y)$, and $\vert \vert x-p_2\vert  - \vert x-y\vert  \vert < 2^{1-r}$. Again by the definition of $M$, $K^A(p) \leq (\eta + \frac{\ve}{2})r$. 

Let $w\in\R^2$ such that $\vert w - p\vert < \vert x - y\vert 2^{1-r}$. Since $w\in\R^2$,
\begin{center}
$K^A_r(w) \leq (\eta + \frac{\ve}{2})r + 2\log \lceil \vert x-y\vert \rceil + O(\log r)$.
\end{center}
Let $s = -\log\vert y-w\vert $. We first assume that $\ve r \leq \sigma(r - s) -\ve r$. Then, by condition (ii),
\begin{center}
$\eta r + \ve r \leq (\eta + \frac{\ve}{2})r + 2\log \lceil \vert x-y\vert \rceil + O(\log r)$,
\end{center}
which is a contradiction, since $r$ was chosen to be sufficiently large. Now assume that $\sigma(r-s)-\ve r < \ve r$. Then,
\begin{center}
$\eta r + \sigma(r-s) -\ve r \leq (\eta + \frac{\ve}{2})r + 2\log \lceil \vert x-y\vert \rceil + O(\log r)$,
\end{center}
and so $r - s \leq \frac{3\ve r}{\sigma} + O(\log r)$. Hence,
\begin{center}
$K^{A,x}_r(y\mid w) \leq \frac{3\ve}{\sigma} r + O(\log r)$.
\end{center}

Therefore,
\begin{align*}
K^{A,x}_{r,t}(y\mid y) &\leq \vert \pi \vert + \frac{3\ve}{\sigma} r + O(\log r)\\
&= K^{A,x}_{r,t}(\vert x-y\vert \mid y|) + \frac{3\ve}{\sigma} r + K(\ve, \eta) + O(\log r),
\end{align*}
and the proof is complete.
\end{proof}

For a fixed $y\in \R^2$ and oracle $A\subseteq\N$, we consider the the piece-wise linear function $f$ that agrees with $K^A_s(y)$ on the integers such that 
\begin{center}
$f(a) = f(\lfloor a\rfloor) + (a -\lfloor a\rfloor)(f(\lceil a \rceil) - f(\lfloor a\rfloor)) $,
\end{center}
for any non-integer $a$. We define $\sigma$-yellow and $\sigma$-teal in the exact same way as in Section \ref{ssec:admissiblePartitions}. We say that an interval $[a,b]$ is $\sigma$-green if it is both $\sigma$-yellow and $\sigma$-teal and $b \leq 2a$. 

We now prove the result which bounds the complexity of distances on special (yellow and teal) intervals. This corresponds to our projection result, Proposition \ref{prop:projectionYellowTeal}.
\begin{lem}\label{lem:distancesYellowTeal}
Let $A\subseteq\N$, $x,y \in \R^2$, $e = \frac{x-y}{\vert x-y\vert }$, $\sigma \in \Q\cap (0,1)$, $C\in\N$, $\ve > 0$, and $t < r\leq 2t$. Suppose $r$ is sufficiently large and
\begin{center}
    $K^A_{s,r}(e \mid y) \geq \sigma s - C\log r$, for all $s \leq r - t$.
\end{center}
Then the following hold.
\begin{enumerate}
    \item If $[t,r]$ is $\sigma$-yellow and $t\leq r \leq 2t$,
    \begin{center}
        $K^{A,x}_r(y\mid \vert x - y\vert, y) \leq K^{A}_{r-t}(y\mid y) - \sigma(r-t) + \epsilon r$.
    \end{center}
    \item If $[t,r]$ is $\sigma$-teal and $t\leq r \leq 2t$,
    \begin{center}
        $K^{A,x}_r(y\mid \vert x - y\vert, y) \leq \epsilon r$.
    \end{center}
\end{enumerate}
\end{lem}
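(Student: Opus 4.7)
The plan is to mirror the proofs of Lemmas~\ref{lem:almostSigmaYellowBound} and~\ref{lem:almostSigmaTealBound}, substituting the distance enumeration bound Lemma~\ref{lem:distanceEnumeration} for its projection counterpart Lemma~\ref{lem:enumerationLemmaProjections}, and substituting a circle-intersection estimate for the line-intersection estimate Lemma~\ref{lem:intersectionLemmaProjections}. For case~(1) I would choose $\eta \in \Q_+$ so that $\eta r = K^A_t(y) + \sigma(r-t) - 3\varepsilon r$; for case~(2) I would choose $\eta r = K^A_r(y) - 3\varepsilon r$. In either case, applying Lemma~\ref{lem:oracles} at $y$ with parameter $\eta$ yields an oracle $D = D^A(r,y,\eta)$ with $K^{A,D}_r(y) \leq (\eta + \varepsilon/2) r$, which is condition~(i) of Lemma~\ref{lem:distanceEnumeration} relative to $(A,D)$.

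The substantive step is verifying condition~(ii): for every $z \in B_{2^{-t}}(y)$ with $|x-z|=|x-y|$, writing $s = -\log|y-z| \in [t,r]$, we need
\[
K^{A,D}_r(z) \geq \eta r + \min\{\varepsilon r,\,\sigma(r-s)-\varepsilon r\}\,.
\]
Here I would invoke the circle analog of Lemma~\ref{lem:intersectionLemmaProjections}: if $|x-y|=|x-z|$ with $|y-z|=2^{-s}$, then $e$ can be reconstructed, up to angular error of order $2^{-s}/|x-y|$, from $x$, $z$, and $y$ at precision $s$, producing
\[
K^{A,D}_r(z) \geq K^{A,D}_s(y) + K^{A,D}_{r-s,\,r}(e \mid y) - O(\log r)\,.
\]
Since $s \geq t$, we have $r-s \leq r-t$, so the hypothesis applies to give $K^{A,D}_{r-s,r}(e \mid y) \geq \sigma(r-s) - O(\log r)$. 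Combined with the dichotomy $K^{A,D}_s(y) \in \{K^A_s(y),\,\eta r\} + O(\log r)$ from Lemma~\ref{lem:oracles}(i), a two-case calculation identical in structure to those in Lemmas~\ref{lem:almostSigmaYellowBound} and~\ref{lem:almostSigmaTealBound} yields the required lower bound on $K^{A,D}_r(z)$.

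With both hypotheses of Lemma~\ref{lem:distanceEnumeration} verified relative to $(A,D)$, applying that lemma gives
\[
K^{A,D,x}_{r,t}(y \mid y) \leq K^{A,D,x}_{r,t}(|x-y| \mid y) + \tfrac{3\varepsilon}{\sigma}r + O(\log r)\,.
\]
Stripping $D$ via Lemma~\ref{lem:oracles}(iv) picks up an additive $K^A_r(y) - \eta r$, which in case~(1) equals $K^A_{r,t}(y \mid y) - \sigma(r-t) + O(\varepsilon r)$ by symmetry of information, and in case~(2) equals $O(\varepsilon r)$. One final application of symmetry of information rewrites the left-hand side as $K^{A,x}_r(y \mid |x-y|, y)$ and absorbs all $O(\varepsilon/\sigma)r$ slack into $\varepsilon r$, giving the two stated bounds.

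The main obstacle is the circle-intersection estimate. Unlike in the line case, the direction from $x$ to a nearby point $z$ on the circle through $y$ is not $e$ itself but a rotation of it through an angle $\sim 2^{-s}/|x-y|$, and one has to show that this rotation costs only $O(\log r)$ bits to compute from the available data; the hypothesis $r \leq 2t$ is exactly what keeps $s$ comparable to $t$ and prevents the scale $|x-y|$ from becoming so small relative to $2^{-s}$ that the rotation error overruns the slack.
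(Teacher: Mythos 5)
Your proposal follows the same skeleton as the paper's proof: same choices of $\eta$ in the two cases, the oracle $D = D^A(r,y,\eta)$ from Lemma~\ref{lem:oracles}, a two-case check of condition~(ii) of Lemma~\ref{lem:distanceEnumeration}, an application of that lemma, and an appeal to Lemma~\ref{lem:oracles}(iv) to strip $D$. You also correctly identify that the hypothesis $r \leq 2t$ is the crucial ingredient in the geometric step. So the structure is essentially right.

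The gap is in the step you yourself flag as the main obstacle: you propose to ``invoke the circle analog of Lemma~\ref{lem:intersectionLemmaProjections}'' without proving one, and the geometric estimate you cite for it is not the one that does the work. You state the \emph{angular} deviation of $\frac{x-z}{|x-z|}$ from $e$, which is first-order in $|y-z|$, namely $\sim 2^{-s}/|x-y|$. What the argument actually needs is the \emph{projection} error $|p_e z - p_e y|$, and this is \emph{quadratic} in $|y-z|$: the circle $\{w : |x-w| = |x-y|\}$ is tangent at $y$ to the line $\{w : p_e w = p_e y\}$, so $|p_e z - p_e y| \lesssim |y-z|^2/|x-y| = 2^{-2s}/|x-y|$. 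Since $s \geq t$ and $r \leq 2t$, this is $\leq 2^{-r+O(1)}$, which is exactly the hypothesis of the \emph{existing} line-intersection Lemma~\ref{lem:intersectionLemmaProjections}. Thus no circle analog is needed at all: one passes to precision $r - O(1)$ (absorbing $\log|x-y|$ into the $O(\log r)$) and applies Lemma~\ref{lem:intersectionLemmaProjections} verbatim with $w = z$, obtaining
\[
K^{A,D}_r(z) \geq K^{A,D}_s(y) + K^{A,D}_{r-s,r}(e \mid y) - O(\log r).
\]
This is why the $r \leq 2t$ hypothesis appears in the statement: it guarantees $2^{-2s} \leq 2^{-2t} \leq 2^{-r}$, so the tangent-line approximation holds to the precision required. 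Your first-order angular estimate, by contrast, only gives $|p_e z - p_e y| \lesssim 2^{-s}$, which is precision $s < r$ and would not satisfy the hypothesis of Lemma~\ref{lem:intersectionLemmaProjections} at the needed scale. The route you sketch (reconstructing $e$ from $z$ via the rotated direction, then invoking symmetry of information) could plausibly be developed into a correct argument, but as written it replaces a one-line observation with an unproved lemma, and the estimate supporting that lemma is off by an order. The rest of your argument --- the dichotomy on $K^{A,D}_s(y)$, verifying condition~(ii), applying Lemma~\ref{lem:distanceEnumeration}, and stripping $D$ --- matches the paper.
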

\begin{proof}
Let $\ve^\prime \in \Q$ be sufficiently small. Let $\eta \in \Q$ such that 
 
 $$\eta r = \begin{cases}
     K^A_t(y) + \sigma (r - t) - 3\ve^\prime r  &\text{ if $[t,r]$ is yellow}\\
     K^A_r(y) - 3\ve^\prime r &\text{ if $[t,r]$ is teal}
 \end{cases} $$
 Let $D = D^A(r, y, \eta)$ be the oracle of Lemma \ref{lem:oracles}. Note that, by Lemma \ref{lem:oracles}(i), $K^{A,D}_r(y) \leq \eta r + \frac{\ve^\prime r}{2}$.

Suppose that $w\in B_{2^{-t}}(y)$ such that $\vert x - w\vert = \vert x - y\vert $. Let $s = -\log\vert w-y\vert $. Our goal is to show a sufficiently large lower bound on the complexity of $w$, so that we may apply Lemma \ref{lem:distanceEnumeration}. We first note that, since $y$ and $w$ agree to precision at least $t$,
\begin{center}
$\vert p_{e} y - p_{e} w\vert  < 2^{-2t+c}\leq 2^{-r+c}$,
\end{center}
where $c = \log\vert x-y\vert $. Therefore, by Lemma \ref{lem:intersectionLemmaProjections},
\begin{align*}
K^{A,D}_r(w) &\geq K^{A,D}_{r-c}(w) \tag*{}\\
&\geq K^{A,D}_s(y) + K^{A,D}_{r-c-s,r-c}(e\mid y) - O(\log r)\\
&\geq K^{A,D}_s(y) + K^{A,D}_{r-s,r}(e\mid y) - O(\log r)\\
&\geq K^{A,D}_s(y) + K^A_{r-s,r}(e\mid y)  - O(\log r)\\
&\geq K^{A,D}_s(y) + \sigma (r - s) -\frac{\ve^\prime r}{2} - O(\log r).
\end{align*}

By the properties of our oracle $D$, there are two cases to consider. If $K^{A,D}_s(y) = K^A_s(y) - O(\log r)$, then 
\begin{equation}
    K^{A,D}_r(w) \geq K^A_s(y) + \sigma(r - s) -\frac{\ve^\prime r}{2}- O(\log r).
\end{equation}
It is not difficult to verify that, regardless of whether $[t,r]$ is yellow or teal, 
\begin{equation}
    K^{A,D}_r(w) \geq (\eta + \ve^\prime) r.
\end{equation}

For the other case, when $K^{A,D}_s(y) = \eta r - O(\log r)$, we again have
\begin{align*}
K^D_r(w) &\geq  \eta r + \sigma(r - s) -\frac{\ve^\prime r}{2} - O(\log r)\\
&\geq \eta r + \sigma(r- s) -\ve^\prime r
\end{align*}
Thus the conditions of Lemma \ref{lem:distanceEnumeration} are satisfied. Applying this, relative to $(A,D)$, yields
\begin{equation}\label{eq:distanceIntervalIncreasingMain}
K_{r,t}^{A,D,x}(y \mid y) \leq K^{A,D,x}_{r, t}( \vert x-y\vert \mid y) + \frac{3\ve^\prime}{\sigma} r +O(\log r),
\end{equation}
or, equivalently,
\begin{equation}\label{eq:distanceIntervalIncreasingMain2}
K_{r,r,t}^{A,D,x}(y \mid \vert x - y\vert, y) \leq \frac{3\ve^\prime}{\sigma} r +O(\log r).
\end{equation}
Using Lemma \ref{lem:oracles}(iv), we conclude that
\begin{align*}
    K_{r,r,t}^{A,x}(y \mid \vert x - y\vert, y) &\leq  K_{r,r,t}^{A,D,x}(y \mid \vert x - y\vert, y) + K^A_r(y) - \eta r\\
    &\leq K^A_r(y) - \eta r + \frac{3\ve^\prime}{\sigma} r +O(\log r),
\end{align*}
and the conclusion follows.
\end{proof}

As with projections, we have a trivial, though still useful, bound for the complexity of distances.
\begin{lem}\label{lem:oneBoundDistances}
    Let $A\subseteq\N$, $x,y \in \R^2$, $e = \frac{x-y}{\vert x-y\vert }$, $\ve > 0$, and $t < r$. Suppose $r$ is sufficiently large. Then 
    \begin{center}
        $K^{A,x}_{r,r,t}(y\mid \vert x - y\vert, y) \leq r-t + \epsilon r$.
    \end{center}
\end{lem}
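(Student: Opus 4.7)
The plan is to mimic Lemma~\ref{lem:oneBoundProjections}, replacing the Cartesian decomposition $x = (p_e x) e + (p_{e^\perp} x) e^\perp$ with the polar decomposition $y = x + |x-y| \cdot e$, where $e = (y-x)/|y-x| \in S^1$. Once $x$ is given by the oracle and $|x-y|$ is given at precision $r$, the point $y$ is constrained to a circle; there is only one remaining (angular) degree of freedom, and the given precision-$t$ approximation to $y$ already fixes this angle to precision roughly $t$. So only about $r - t$ additional bits should be needed to describe $y$ at precision $r$.

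Concretely, I would describe a machine $M$ with oracle $(A, x)$ which, on input a precision-$r$ rational $d$ for $|x-y|$, a precision-$t$ rational $y_t$ for $y$, and an auxiliary bit-string $\pi$, first computes $e_t = (y_t - x)/|y_t - x|$. A one-line estimate using $|y_t - y| \le 2^{-t}$ gives $|e - e_t| \le 2^{-t + c}$, where $c = -\log|x - y| + O(1)$ is a constant depending only on $x$ and $y$. The string $\pi$, of length $r - t + c + O(\log r)$, is used to specify the angular correction from $e_t$ to a precision-$r$ approximation $e^\ast$ of $e$. The machine then outputs $x + d \cdot e^\ast$, computed at an auxiliary working precision $r + O(\log r)$ to absorb rounding. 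A routine triangle-inequality argument shows the output lies within $2^{-r}$ of $y$.

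Summing these description lengths yields
\[
K^{A,x}_{r,r,t}(y \mid |x - y|, y) \le r - t + c + O(\log r),
\]
and since $c = c(x, y)$ is a fixed constant, for $r$ sufficiently large the right-hand side is at most $r - t + \varepsilon r$, as required.

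The only delicate point is that the angular-to-Cartesian conversion factor $c$ blows up as $|x - y| \to 0$; but, as in the analogous Lemma~\ref{lem:oneBoundProjections}, the implicit constant in ``sufficiently large $r$'' may depend on $x$ and $y$, so this additive constant is harmlessly absorbed into the $\varepsilon r$ slack. Beyond this and the standard precision-tracking needed to manage rounding in the final output, there is no substantive obstacle.
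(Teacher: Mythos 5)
Your proof is correct and captures essentially the same idea as the paper's. The paper phrases it as a covering argument — the set $C$ of points $z$ within $2^{-t}$ of $y$ satisfying $\bigl||x-z| - |x-y|\bigr| < 2^{-r}$ is a thin annular arc coverable by $O(2^{r-t})$ dyadic $2^{-r}$-cubes (computably, given the data) — whereas you phrase it as an explicit polar reconstruction $y = x + |x-y|\,e$ and count the bits needed to refine the angle from precision $t$ to precision $r$; the two are equivalent implementations of the same degree-of-freedom count, and your handling of the $|x-y|$-dependent constant and the rounding at working precision is fine.
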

\begin{proof}
    Let 
    \begin{center}
        $C= \{z \in B_{2^{-t}}(y)\mid \vert \vert x - z\vert - \vert x - y\vert \vert < 2^{-r}\}$.
    \end{center}
    Note that $C$ can be covered by $O(2^{r-t})$ dyadic cubes with side-length $2^{-r}$. Moreover, this covering is computable given $y$ to precision $t$ and given $x$ as an oracle, and the conclusion follows.
\end{proof}

We say that a partition $\mathcal{P} = \{[a_i, a_{i+1}]\}_{i=0}^k$ of intervals with disjoint interiors is \textbf{\textit{good}} if $[1, r] = \cup_i [a_i, a_{i+1}]$ and it satisfies the following conditions.
\begin{enumerate}
\item $[a_i, a_{i+1}]$ is either $\sigma$-yellow or $\sigma$-teal,
\item $a_{i+1} \leq 2a_i$, for every $i$ and
\item $a_{i+2} > 2a_{i}$ for every $i < k$.
\end{enumerate}

Essentially the same proof as Lemma \ref{lem:partitionProjectionSum} shows that a good partition gives a bound on the complexity of $K^{A,x}_r(y\mid \vert x - y\vert)$.
\begin{lem}\label{lem:boundGoodPartitionDistance}
    Let $\mathcal{P}$ be a good partition. Then
    \begin{center}
        $K^{A,x}_r(y\mid \vert x - y\vert) \leq \epsilon r + \sum\limits_{i\in \mathbf{Bad}} \min\{K^{A}_{r-t}(y\mid y) - \sigma(r-t), r-t\}$,
    \end{center}
    where
    \begin{center}
\textbf{Bad} $=\{i\leq k\mid [a_i, a_{i+1}] \notin T_\sigma\}$.
\end{center}
\end{lem}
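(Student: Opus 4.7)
The plan is to mimic the proof of Lemma~\ref{lem:partitionProjectionSum}, substituting the distance-side tools of Section~4 (Lemmas~\ref{lem:distancesYellowTeal} and \ref{lem:oneBoundDistances}) for the projection-side tools. First I would apply symmetry of information (Lemma~\ref{lem:unichain}) repeatedly across the partition points $a_0 < a_1 < \cdots < a_{k+1}$ to telescope
\[
K^{A,x}_r(y \mid |x-y|) \leq \sum_{i=0}^{k} K^{A,x}_{a_{i+1}, a_{i+1}, a_i}(y \mid |x-y|, y) + O(k \log r).
\]

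The structural conditions on a good partition then come into play. The condition $a_{i+2} > 2 a_i$ forces $k = O(\log r)$, so the accumulated $O(k\log r)$ error from symmetry of information is absorbed into $\varepsilon r$. The condition $a_{i+1} \leq 2 a_i$ is exactly the hypothesis $r \leq 2t$ required by Lemma~\ref{lem:distancesYellowTeal}, applied to each subinterval individually. For the teal intervals, Lemma~\ref{lem:distancesYellowTeal}(2) (with a small parameter $\varepsilon'$) gives a bound of $\varepsilon' a_{i+1}$. For the yellow (bad) intervals, taking the minimum of the two bounds produced by Lemma~\ref{lem:distancesYellowTeal}(1) and Lemma~\ref{lem:oneBoundDistances} gives
\[
K^{A,x}_{a_{i+1}, a_{i+1}, a_i}(y \mid |x-y|, y) \leq \min\{K^A_{a_{i+1}, a_i}(y \mid y) - \sigma(a_{i+1} - a_i),\, a_{i+1} - a_i\} + \varepsilon' r.
\]
Choosing $\varepsilon'$ sufficiently small relative to $\varepsilon$ and $k$ (admissible since $k = O(\log r)$), summing over $i$, and separating the teal and bad contributions yields the claimed bound (with the natural reading of $K^A_{r-t}(y\mid y) - \sigma(r-t)$ and $r-t$ as $K^A_{a_{i+1}, a_i}(y\mid y) - \sigma(a_{i+1}-a_i)$ and $a_{i+1}-a_i$, as in Lemma~\ref{lem:partitionProjectionSum}).

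The main obstacle I anticipate is verifying the hypothesis of Lemma~\ref{lem:distancesYellowTeal}, namely $K^A_{s, a_{i+1}}(e \mid y) \geq \sigma s - C\log a_{i+1}$ for $s \leq a_{i+1} - a_i$, on every subinterval. This must be a standing assumption for the section (paralleling condition~(2) in Lemma~\ref{lem:partitionProjectionSum}), and in the intended downstream application it is secured using the low-dimensional radial projection results of Orponen--Shmerkin--Wang, which supply a direction $e$ of sufficient effective complexity relative to $y$. Once that hypothesis is granted, the remaining work is purely bookkeeping mirroring the projection case.
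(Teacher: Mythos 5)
Your proof is correct and matches the approach the paper intends---the paper omits an explicit argument, merely noting that ``essentially the same proof as Lemma~\ref{lem:partitionProjectionSum}'' applies, and your proposal is exactly the right adaptation, swapping Proposition~\ref{prop:projectionYellowTeal} and Lemma~\ref{lem:oneBoundProjections} for Lemmas~\ref{lem:distancesYellowTeal} and~\ref{lem:oneBoundDistances} and correctly isolating the roles of the two dyadic conditions in the definition of a good partition. One small point worth tidying: you suggest taking $\varepsilon'$ small ``relative to $\varepsilon$ and $k$,'' but since $k$ grows with $r$ that would make $\varepsilon'$ depend on $r$ and conflict with the ``sufficiently large $r$'' quantifier in Lemma~\ref{lem:distancesYellowTeal}; in fact no such dependence is needed, because the condition $a_{i+2} > 2a_i$ makes the endpoints grow geometrically, so $\sum_i a_{i+1} = O(r)$ and hence $\sum_i \varepsilon' a_{i+1} = O(\varepsilon' r)$ for a fixed $\varepsilon'$, while the accumulated $O(k\log r) = O((\log r)^2)$ symmetry-of-information error is likewise $o(r)$.
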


\subsection{Constructing a better partition}
We now fix a $A\subseteq\N$, $\sigma \in (0,1)$, $x, y$ and $e:= \frac{y-x}{\vert x -y\vert}$ satisfying
\begin{itemize}
\item[\textup{(C1)}] $\dim^A(x), \dim^A(y) > \sigma$
\item[\textup{(C2)}] $K^{A,x}_r(e) \geq \sigma r - O(\log r)$ for all $r$.
\item[\textup{(C3)}] $K^{A, x}_r(y) \geq K^{A}_r(y) - O(\log r)$ for all sufficiently large $r$. 
\item[\textup{(C4)}] $K^{A}_{t,r}(e\mid y) \geq \sigma t - O(\log r)$ for all sufficiently large $r$ and $t \leq r$.
\end{itemize}

Let $D = \max\{\Dim^A(x), \Dim^A(y)\}$. Let $\alpha = \min\{D, 1+\sigma\}$.

\medskip

\noindent \textbf{Constructing the partition $\mathcal{P}$}: We define the sequence $\{r_i\}$ inductively. To begin, we set $r_0 = r$. Having defined the sequence up to $r_i$, we choose $r_{i+1}$ as follows. Let $a \leq r_i$ be the minimal real such that $[a, r_i]$ can be written as the union of $\sigma$-yellow intervals whose lengths are at most doubling. If $a < r_i$, then we set $r_{i+1} = a$. In this case, we will refer to $[r_{i+1},r_i]$ as a \textbf{yellow} interval of $\mathcal{P}$.

Otherwise,  let $t_i < r_i$ be the maximum of all reals $t<r$ such that
\begin{equation}\label{eq:choiceOfRi+1}
    f(t) = f(r_i) -\sigma (r_i - t) + \frac{(\alpha-\sigma)\sigma}{\alpha}(r_i - 2t).
\end{equation}
Let $t_i^\prime < r_i$ be the largest real $t < r$ such that $f(r_i) = f(t) + \sigma(r_i - t)$. Note that such a $t^\prime$ must exist. We then set 
\begin{equation}
    r_{i+1} = \max\{t_i, t_i^\prime\}.
\end{equation}
Note that, in this case, $[r_{i+1}, r_i]$ is $\sigma$-teal. We therefore refer these intervals as \textbf{teal} intervals of $\mathcal{P}$.

We begin by observing that our partition is well defined. 
\begin{obs}\label{obs:wellDefinedYPartition}
Suppose that $r_i\leq r$ and $r_i > C$, for some fixed constant $C$ depending only on $y$. Then there is at least one $t$ such that
\begin{equation*}
    f(t) = f(r_i) -\sigma (r_i - t) + \frac{(\alpha-\sigma)\sigma}{\alpha}(r_i - 2t).
\end{equation*}
\end{obs}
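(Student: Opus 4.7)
My plan is to apply the intermediate value theorem. Define the continuous function
\[h(t) := f(r_i) - f(t) - \sigma(r_i - t) + \frac{(\alpha-\sigma)\sigma}{\alpha}(r_i - 2t)\qquad \text{for } t \in [0, r_i];\]
$h$ is continuous because $f$ is piecewise linear, and a zero of $h$ in $(0, r_i)$ is exactly a witness $t < r$ for the desired equation.

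First, I would evaluate at the right endpoint to get $h(r_i) = -\tfrac{(\alpha-\sigma)\sigma}{\alpha}\, r_i$. Condition (C1) gives $\dim^A(y) > \sigma$, so $D \geq \dim^A(y) > \sigma$, and therefore $\alpha = \min\{D, 1+\sigma\} > \sigma$ as well. Hence $h(r_i) < 0$.

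At the left endpoint, collecting the two linear-in-$t$ terms yields
\[h(0) = f(r_i) - f(0) - \tfrac{\sigma^2}{\alpha}\, r_i.\]
Using $\dim^A(y) > \sigma$ once more, for all $r_i$ larger than some constant $C$ depending only on $y$ (and the fixed oracle $A$) we have $f(r_i) = K^A_{r_i}(y) \geq \sigma r_i$. Combined with $f(0) = O(1)$ and $\sigma/\alpha < 1$, this yields $h(0) \geq \sigma(1 - \sigma/\alpha)\, r_i - O(1) > 0$ once $r_i > C$ (enlarging $C$ if needed). The intermediate value theorem then produces a zero of $h$ in $(0, r_i)$, which is the required $t$. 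The only delicate point in the argument is the strict inequality $\alpha > \sigma$, which is immediate from (C1) and $\sigma < 1$, so I do not anticipate any real obstacles here.
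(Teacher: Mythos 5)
Your proof is correct and takes essentially the same approach as the paper: both arguments evaluate the two sides of the equation at the endpoints $t=0$ and $t=r_i$, use $\dim^A(y)>\sigma$ to get the sign at $t=0$ and the inequality $\alpha>\sigma$ to get the sign at $t=r_i$, and conclude by the intermediate value theorem. Packaging the comparison into the auxiliary function $h$ is just a notational streamlining of the paper's argument.
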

\begin{proof}
We first note that $f(0) = O(1)$, and so, for $t = 0$,
    \begin{align*}
        f(r_i) -\sigma (r_i-t) + \frac{(\alpha-\sigma)\sigma}{\alpha}(r_i - 2t) &> f(r_i) - \sigma r_i\\
        &> f(0),
\end{align*}
since $\dim^A(y) > \sigma$.

We also see that, at $t = r_i$,
    \begin{equation*}
        f(r_i) -\sigma (r_i-t) + \frac{(\alpha-\sigma)\sigma}{\alpha}(r_i - 2t) < f(r_i),
    \end{equation*}
and the conclusion follows.
\end{proof}

Having established that our partition is well-defined, we now show that the complexity of the distance has good growth on the yellow intervals.
\begin{lem}\label{lem:boundComplexityYellowNewPartition}
Let $\ve > 0$. Suppose that $[r_{i+1}, r_i]\in \mathcal{P}$ is a yellow interval. Then,
\begin{equation*}
    K^{A,x}_{r_i, r_{i+1}}(\vert x -y\vert \mid \vert x-y\vert) \geq \sigma(r_i - r_{i+1}) - \ve r_i.
\end{equation*}
\end{lem}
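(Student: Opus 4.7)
The plan is to decompose $[r_{i+1}, r_i]$ into at-most-doubling $\sigma$-yellow sub-intervals, apply Lemma~\ref{lem:distancesYellowTeal}(1) on each piece, and telescope the resulting bounds via the chain rule. By the construction of $\mathcal{P}$, the yellow interval $[r_{i+1}, r_i]$ admits a partition $r_{i+1} = s_0 < s_1 < \cdots < s_N = r_i$ such that each $[s_j, s_{j+1}]$ is $\sigma$-yellow with $s_{j+1} \leq 2 s_j$; the doubling condition forces $N = O(\log r_i)$.

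The key reduction is the symmetry-of-information identity
\begin{equation*}
K^{A,x}_{r,s}(|x-y| \mid |x-y|) \geq K^{A,x}_{r,s}(y \mid y) - K^{A,x}_{r,r,s}(y \mid |x-y|, y) - O(\log r),
\end{equation*}
which follows from Lemma~\ref{lem:unichain} combined with the fact that $|x-y|$ is computable from $y$ with oracle access to $x$, so the pair $(y, |x-y|)$ has the same Kolmogorov complexity as $y$ at every precision. On each sub-interval $[s_j, s_{j+1}]$, $\sigma$-yellowness together with condition (C3) yields $K^{A,x}_{s_{j+1}, s_j}(y \mid y) \geq \sigma(s_{j+1} - s_j) - O(\log r_i)$. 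Meanwhile, (C4) and the doubling bound $s_{j+1} \leq 2 s_j$ let me invoke Lemma~\ref{lem:distancesYellowTeal}(1) with a small parameter $\varepsilon'$ to obtain the upper bound $K^{A,x}_{s_{j+1}, s_{j+1}, s_j}(y \mid |x-y|, y) \leq K^A_{s_{j+1}, s_j}(y \mid y) - \sigma(s_{j+1} - s_j) + \varepsilon' s_{j+1}$. Subtracting these two estimates, the $K^A$ and $K^{A,x}$ versions of $K(y\mid y)$ cancel (up to $O(\log r_i)$, again by (C3)), leaving
\begin{equation*}
K^{A,x}_{s_{j+1}, s_j}(|x-y| \mid |x-y|) \geq \sigma(s_{j+1} - s_j) - \varepsilon' s_{j+1} - O(\log r_i).
\end{equation*}

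Summing these sub-interval estimates by telescoping via Lemma~\ref{lem:unichain}(ii) yields the desired lower bound $\sigma(r_i - r_{i+1})$, modulo total error $\varepsilon' \sum_j s_{j+1} + N \cdot O(\log r_i)$. The main obstacle will be controlling this accumulated error across $N = O(\log r_i)$ sub-intervals: I would take $\varepsilon'$ of order $\varepsilon / \log r_i$, which is admissible in Lemma~\ref{lem:distancesYellowTeal} provided $s_{j+1}$ exceeds a threshold depending on $\varepsilon'$. Sub-intervals below this threshold form a prefix of total length $o(r_i)$, so their contribution is absorbed directly into the $\varepsilon r_i$ slack on the right-hand side; the remaining telescoped error is $O(\varepsilon r_i)$, completing the argument.
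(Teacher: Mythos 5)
Your proposal follows essentially the same route as the paper: decompose the yellow interval into doubling $\sigma$-yellow sub-intervals, invoke Lemma~\ref{lem:distancesYellowTeal}(1) on each, and telescope via symmetry of information. The paper packages the telescope as Lemma~\ref{lem:boundGoodPartitionDistance} (``good partition'' bound), which is exactly the sum you unwind by hand, so the two arguments are the same in substance. One remark on your error bookkeeping: taking $\ve' \sim \ve/\log r_i$ is unnecessary and is what forces you into the somewhat delicate ``prefix of length $o(r_i)$'' patch. Since the endpoints $s_j$ more than double every two steps (this is baked into the definition of a good partition via $b_{k+2} > 2 b_k$), the lengths decay geometrically and $\sum_j s_{j+1} = O(r_i)$; a \emph{fixed} small $\ve'$ already gives total error $O(\ve' r_i) + O(\log^2 r_i)$, and the ``sufficiently large'' threshold in Lemma~\ref{lem:distancesYellowTeal} is then an absolute constant, so the only precisions it excludes are $O(1)$ many at the bottom of the scale, which contribute $O(1)$ to the sum. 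This is the cleaner version of what you are doing and avoids any dependence of the threshold on $r_i$.
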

\begin{proof}
    By assumption, $[r_{i+1}, r_i]$ is the union of of $\sigma$-yellow intervals $[a_{j+1}, a_j]$ such that $a_j \leq 2a_{j+1}$. By a simple greedy strategy we can construct a partition $P_1 = \{[b_{k+1}, b_k]\}$ of $[r_{i+1}, r_i]$ such that, for every $k$, $[b_{k+1}, b_k]$ is yellow, $b_k \leq 2 b_{k+1}$ and $b_{k+2} > 2b_k$. That is, $P_1$ is a good partition of $[r_{i+1}, r_i]$. The conclusion then follows from Lemma \ref{lem:boundGoodPartitionDistance}.
\end{proof}

We now turn to the growth rate of the complexity of $\vert x - y\vert$ on the teal intervals of our partition. This is somewhat involved, and will need several auxiliary results. 
\begin{lem}\label{lem:notTooManyIntervalsYPartition}
If $[r_{i+1}, r_i] \in \mathcal{P}$ is teal, then $r_{i+1} \leq \frac{r_i}{2}$.
\end{lem}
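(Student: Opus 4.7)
The plan is to show $t_i' \le r_i/2$ and $t_i \le r_i/2$ separately, which together with $r_{i+1} = \max\{t_i, t_i'\}$ yields the claim. The main device is the continuous function
\[g(t) := f(t) + \sigma(r_i - t) - f(r_i),\]
for which $g(r_i) = 0$, $g(t_i') = 0$ by the definition of $t_i'$, and substituting the defining equation of $t_i$ gives $g(t_i) = \beta(r_i - 2 t_i)$ with $\beta := (\alpha - \sigma)\sigma/\alpha > 0$; positivity holds because $\alpha = \min\{D, 1+\sigma\}$ and $D \ge \dim^A(x) > \sigma$. The crucial identity is $g(u) - g(v) = f(u) - f(v) - \sigma(u-v)$, which converts the universally quantified yellow condition into a single comparison between two values of $g$: $[v, r_i]$ is $\sigma$-yellow if and only if $v$ is a minimizer of $g$ on $[v, r_i]$.

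Next I extract the structural content of the teal hypothesis. By the construction, the fact that $[r_{i+1}, r_i]$ is labeled teal means the minimal $a$ such that $[a, r_i]$ decomposes into a union of doubling $\sigma$-yellow intervals is $a = r_i$ itself; in particular, no single $\sigma$-yellow interval $[c, r_i]$ with $c \in [r_i/2, r_i)$ exists, because such an interval would itself be a single doubling yellow cover and would place us in the yellow case. Combined with the equivalence above this yields the dichotomy I will use: if $g$ attains a strictly negative value somewhere on $[c, r_i]$ for some $c \ge r_i/2$, then a minimizer $v^*$ of $g$ on $[c, r_i]$ (which exists by continuity of $f$ and compactness) satisfies $g(v^*) < 0 = g(r_i)$ and hence $v^* \in [r_i/2, r_i)$, so that $[v^*, r_i]$ is simultaneously $\sigma$-yellow and doubling, contradicting the teal case.

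The two bounds follow at once from this dichotomy. If $t_i' > r_i/2$, then $[t_i', r_i]$ is doubling and, by the teal case, cannot be $\sigma$-yellow; the equivalence produces $u \in [t_i', r_i]$ with $g(u) < g(t_i') = 0$, and the dichotomy applied with $c = t_i'$ gives a contradiction. If $t_i > r_i/2$, then $g(t_i) = \beta(r_i - 2 t_i) < 0$ directly, and the dichotomy applied with $c = t_i$ again yields a contradiction. Hence $r_{i+1} = \max\{t_i, t_i'\} \le r_i/2$. The main delicacy of the argument is identifying the auxiliary function $g$ and recognising the yellow-equals-minimizer equivalence; once this translation is in hand, the rest is a short sign-analysis powered by the defining equations of $t_i$ and $t_i'$ and the structural obstruction that the teal case provides at the right endpoint $r_i$.
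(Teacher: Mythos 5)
Your proof is correct and uses essentially the same argument as the paper: both work with the auxiliary function $g(t) = f(t) - f(r_i) + \sigma(r_i - t)$ together with the observation that the teal branch of the construction rules out any doubling $\sigma$-yellow interval $[c, r_i]$ with $c \in [r_i/2, r_i)$, and then check that $t_i' > r_i/2$ or $t_i > r_i/2$ would produce such an interval. Your explicit ``minimizer of $g$'' dichotomy is a slightly more systematic packaging of steps the paper leaves implicit, namely that $[t_i', r_i]$ is automatically $\sigma$-yellow (by maximality of $t_i'$) and that this forces $g(t) \ge 0$ for all $t > r_i/2$, which is exactly the inequality the paper invokes to bound $t_i$.
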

\begin{proof}
Suppose that $[r_{i+1}, r_i] \in \mathcal{P}$ is teal. Then, by the construction of $\mathcal{P}$, $[t, r_i]$ is not $\sigma$-yellow, for any $\frac{r_i}{2}\leq t < r_i$. This immediately implies that $t^\prime_i < \frac{r_i}{2}$. Moreover, for any $t > \frac{r_{i}}{2}$, we see that
\begin{align*}
    f(t) - f(r_i) + \sigma (r_i - t) - \frac{(\alpha-\sigma)\sigma}{\alpha}(r_i - 2t) &\geq \sigma(t-r_i) + \sigma (r_i - t) - \frac{(\alpha-\sigma)\sigma}{\alpha}(r_i - 2t) \\
    &= \frac{(\alpha-\sigma)\sigma}{\alpha}(2t-r_i)\\
    &> 0,
\end{align*}
implying that $t_i \leq \frac{r_i}{2}$, and the conclusion follows.
\end{proof}

We will need a lower bound on the left endpoint of teal intervals.
\begin{lem}\label{lem:leftEndpointNotTooSmall}
    Let $\ve>0$. Suppose that $r_{i+1}$ is sufficiently large and $[r_{i+1}, r_i]\in\mathcal{P}$ is a teal interval. Then 
    \begin{equation*}
       r_{i+1} \geq \frac{(\alpha - \sigma)\sigma}{\alpha D+\alpha \sigma -2\sigma^2}r_i- \ve r_i
    \end{equation*}
\end{lem}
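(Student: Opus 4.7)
The plan is to exploit the defining relation for $t_i$ together with the two-sided growth bounds on $f$ furnished by hypothesis (C1) and the definition of $D = \max\{\Dim^A(x),\Dim^A(y)\}$. Since $[r_{i+1},r_i]$ is teal, the construction of $\mathcal{P}$ gives $r_{i+1} = \max\{t_i, t_i'\} \geq t_i$, so it suffices to prove the stated lower bound with $t_i$ in place of $r_{i+1}$; existence of $t_i$ is already guaranteed by Observation \ref{obs:wellDefinedYPartition}.

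First, rearrange the definition of $t_i$ to
\begin{equation*}
f(r_i) \;=\; f(t_i) + \sigma(r_i - t_i) - \frac{(\alpha-\sigma)\sigma}{\alpha}(r_i - 2t_i).
\end{equation*}
Since $r_i \geq 2 r_{i+1}$ by Lemma \ref{lem:notTooManyIntervalsYPartition}, $r_i$ is also large. The inequality $\Dim^A(y) \leq D$ gives $f(t_i) \leq D t_i + o(r_i)$, and the assumption $\dim^A(y) > \sigma$ from (C1) gives $f(r_i) \geq \sigma r_i - o(r_i)$. Substituting both estimates into the above equation and canceling the $\sigma r_i$ on each side leaves
\begin{equation*}
\frac{(\alpha-\sigma)\sigma}{\alpha}(r_i - 2t_i) \;\leq\; (D - \sigma)\,t_i + o(r_i),
\end{equation*}
which rearranges to
\begin{equation*}
t_i \;\geq\; \frac{(\alpha-\sigma)\sigma}{\alpha D + \alpha\sigma - 2\sigma^2}\, r_i - o(r_i).
\end{equation*}
Choosing $r_{i+1}$ large enough that $o(r_i) < \ve r_i$ completes the argument.

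The main obstacle I anticipate is purely bookkeeping: confirming that all the lower-order terms (the $O(\log r_i)$ slack in the inequalities coming from $\dim^A(y)$ and $\Dim^A(y)$, together with the rounding from the piecewise-linear interpolation defining $f$) collapse into the single $\ve r_i$ term. The algebra itself is routine, and positivity of the denominator $\alpha D + \alpha\sigma - 2\sigma^2$ is immediate since $\alpha \geq \sigma$ and $D \geq \sigma$, so the bound is nonvacuous. No deeper analytic or algorithmic input is required — the two universal dimension inequalities on $y$, together with the explicit equation that selects $t_i$, already force $t_i$ to lie above the claimed threshold.
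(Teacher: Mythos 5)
Your argument is correct and essentially mirrors the paper's proof: both exploit the defining equation for $t_i$ together with the upper bound $f(t_i) \leq Dt_i + o(r_i)$ from $\Dim^A(y) \leq D$ and the lower bound $f(r_i) \geq \sigma r_i - o(r_i)$ from $\dim^A(y) > \sigma$, then rearrange. Your reduction to $t_i$ via $r_{i+1} = \max\{t_i, t_i'\} \geq t_i$ is in fact a slightly cleaner way of dispatching the $r_{i+1}=t_i'$ case than the paper's terse "follows by the definition of our partition."
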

\begin{proof}
Assume that $r_{i+1}$ is large enough that, for all $s>r_{i+1}$, 
\begin{equation*}
d s-\ve^\prime s \leq f(s) \leq D s+\ve^\prime s
\end{equation*}
\noindent for some $\ve^\prime$ to be determined. If $K^A_{r_{i+1}}(y) = K^A_{r_i}(y) - \sigma (r_i - r_{i+1})$, then the conclusion follows by the definition of our partition. Otherwise, we have
\begin{equation}\label{eq:smallerPrecisionBound}
        K^A_{r_{i+1}}(y) \geq K^A_{r_{i}}(y) -\sigma (r_i-r_{i+1}) + \frac{(\alpha-\sigma)\sigma}{\alpha}(r_i - 2r_{i+1}) -\ve^\prime r_i.
    \end{equation}
 Therefore,
    \begin{align*}
        D r_{i+1} &\geq D_y r_{i+1}\\
        &\geq K^A_{r_{i+1}}(y)-\ve^\prime r_{i+1}\\
        &\geq K^A_{r_{i}}(y) -\sigma (r_i-r_{i+1}) + \frac{(\alpha-\sigma)\sigma}{\alpha}(r_i - 2r_{i+1}) -\ve^\prime r_i\\
        &\geq \sigma r_{i+1} + \frac{(\alpha-\sigma)\sigma}{D}(r_i - 2r_{i+1}) -2\ve^\prime r_i\\
        &\geq \frac{(\alpha-\sigma)\sigma}{\alpha}r_i +  \frac{2\sigma^2 -\sigma \alpha}{\alpha}r_{i+1} -2\ve^\prime r_i
    \end{align*}
    Rearranging, choosing $\ve^\prime$ appropriately, and simplifying yields 
    \begin{equation}
        r_{i+1} \geq \frac{(\alpha - \sigma)\sigma}{\alpha D+\alpha \sigma -2\sigma^2}r_i - \ve r_i.
    \end{equation}

\end{proof}

With the previous lemmas, we can prove a strong lower bound on the slope of the complexity of $y$ on teal intervals.
\begin{lem}\label{lem:slopeBoundTeal}
    Let $\ve>0$ be given and suppose that $[r_{i+1}, r_i]\in\mathcal{P}$ is a teal interval and $r_i$ is sufficiently large. Then
    \begin{equation}
        \frac{K^A_{r_i, r_{i+1}}(y\mid y)}{r_i - r_{i+1}} \geq \frac{\sigma^2(D+\alpha-2\sigma)}{\alpha D -\sigma^2}-\ve
    \end{equation}
\end{lem}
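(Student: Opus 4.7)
The plan is to reduce the claim to an analysis of the piecewise linear function $f$ on the interval $[r_{i+1},r_i]$ via symmetry of information, and then handle two cases depending on which of $t_i$ or $t_i'$ attains the maximum in the definition of $r_{i+1}$. By Lemma~\ref{lem:unichain}(ii) (applied with the absorbed polylog error), and since $f$ approximates $K_s^A(y)$ on integers up to a negligible error on the scale of $r_i$, we have
\begin{equation*}
K^A_{r_i,r_{i+1}}(y\mid y) \geq f(r_i)-f(r_{i+1}) - O(\log r_i),
\end{equation*}
so it suffices to prove the corresponding lower bound on $\tfrac{f(r_i)-f(r_{i+1})}{r_i-r_{i+1}}$ up to $\varepsilon/2$.

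In the easy case $r_{i+1} = t_i'$, the definition of $t_i'$ gives $f(r_i)-f(r_{i+1}) = \sigma(r_i-r_{i+1})$, so the slope is exactly $\sigma$. The desired inequality $\sigma \geq \frac{\sigma^2(D+\alpha-2\sigma)}{\alpha D - \sigma^2}$ is then equivalent (after clearing the positive denominator $\alpha D - \sigma^2$, which is positive since $D \geq \alpha \geq \sigma$ and $D > \sigma$) to $(\alpha-\sigma)(D-\sigma) \geq 0$, which holds because $\alpha = \min\{D,1+\sigma\} \geq \sigma$.

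In the harder case $r_{i+1} = t_i$, the defining equation \eqref{eq:choiceOfRi+1} yields
\begin{equation*}
f(r_i) - f(r_{i+1}) = \sigma(r_i-r_{i+1}) - \frac{(\alpha-\sigma)\sigma}{\alpha}(r_i - 2r_{i+1}).
\end{equation*}
Setting $u := r_{i+1}/r_i$, the slope in question equals $\sigma - \frac{(\alpha-\sigma)\sigma}{\alpha}\cdot\frac{1-2u}{1-u}$, which is monotonically \emph{increasing} in $u$ on $[0,1/2]$ (the relevant range by Lemma~\ref{lem:notTooManyIntervalsYPartition}). Hence the slope is minimized at the smallest admissible value of $u$. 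Lemma~\ref{lem:leftEndpointNotTooSmall} provides the lower bound $u \geq \frac{(\alpha-\sigma)\sigma}{\alpha D + \alpha\sigma - 2\sigma^2} - \varepsilon'$, and substituting this value (with $\varepsilon'$ chosen small in terms of $\varepsilon$) gives
\begin{equation*}
1-2u = \frac{\alpha(D-\sigma)}{\alpha D + \alpha\sigma - 2\sigma^2}, \qquad 1-u = \frac{\alpha D - \sigma^2}{\alpha D + \alpha\sigma - 2\sigma^2},
\end{equation*}
so $\frac{1-2u}{1-u} = \frac{\alpha(D-\sigma)}{\alpha D - \sigma^2}$. Plugging this in and simplifying algebraically reduces the slope to $\frac{\sigma^2(\alpha + D - 2\sigma)}{\alpha D - \sigma^2}$, exactly the claimed bound.

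The only real obstacle is bookkeeping: tracking the $\varepsilon$-losses coming from symmetry of information, from approximating $K$ by $f$, and from the $\varepsilon'$ slack in Lemma~\ref{lem:leftEndpointNotTooSmall}. These are all $O(\log r_i)$ or linear in a parameter we may choose freely, so by taking $r_i$ sufficiently large and $\varepsilon'$ sufficiently small in terms of $\varepsilon$ they collapse into the single $\varepsilon r_i$ error term of the statement.
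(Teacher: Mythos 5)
Your proof is correct and follows essentially the same route as the paper: split on whether $r_{i+1}=t_i'$ or $r_{i+1}=t_i$, and in the latter case combine the defining equation \eqref{eq:choiceOfRi+1} with the lower bound on $r_{i+1}$ from Lemma~\ref{lem:leftEndpointNotTooSmall}. The only difference is presentational: where the paper dismisses the final step as a ``routine calculation,'' you organize it cleanly by observing that the slope is monotone increasing in $u=r_{i+1}/r_i$ and then substituting the extremal value of $u$, and you also verify explicitly (rather than calling it trivial) that $\sigma\geq \frac{\sigma^2(D+\alpha-2\sigma)}{\alpha D-\sigma^2}$ in the easy case.
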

\begin{proof}
    Recall that we chose $r_{i+1}$ to be 
    \begin{center}
        $r_{i+1} = \max\{t, t^\prime\}$,
    \end{center}
    where $t^\prime$ is the largest real such that $[t^\prime, r_i]$ is green, and $t$ is the largest real such that
    \begin{equation*}
         f(t) = f(r_i) -\sigma (r_i - t) + \frac{(\alpha-\sigma)\sigma}{\alpha}(r_i - 2t).
    \end{equation*}
    If $r_{i+1} = t^\prime$, then 
    \begin{equation*}
        K^A_{r_{i+1}}(y) = K^A_{r_i}(y) - \sigma(r_i - r_{i+1}),
    \end{equation*}
    and the conclusion holds trivially.

    We now assume that $r_{i+1}=t$. Then we calculate
    \begin{align*}
        K^A_{r_i}(y) - K^A_{r_{i+1}} &= \sigma(r_i - r_{i+1}) - \frac{(\alpha-\sigma)\sigma}{\alpha}(r_i -2r_{i+1})\\
        &= \frac{\sigma^2}{\alpha}r_i - \frac{2\sigma^2 -\alpha\sigma }{\alpha}r_{i+1}.
    \end{align*}
    Hence, it suffices to show that
    \begin{equation}
        \frac{\sigma^2}{\alpha}r_i - \frac{2\sigma^2 -\alpha\sigma}{\alpha}r_{i+1} \geq \left(\frac{\sigma^2(D+\alpha-2\sigma)}{\alpha D -\sigma^2} - \ve\right)(r_i - r_{i+1})
    \end{equation}
    Routine calculation shows that this is true, since
    \begin{equation}
        r_{i+1} \geq \frac{(\alpha - \sigma)\sigma}{\alpha D+\alpha \sigma -2\sigma^2}r_i - \ve r_i.
    \end{equation}
by Lemma \ref{lem:leftEndpointNotTooSmall}.

\end{proof}

We are now able to give strong lower bounds on the complexity $K^{A,x}_{r_i, r_{i+1}}(\vert x - y\vert \mid \vert x - y\vert )$ for any teal interval $[r_{i+1}, r_i] \in \mathcal{P}$. To do so, we will use the following from \cite{Stull22c}.
\begin{lem}\label{lem:lowerBoundOtherPointDistance}
Let $x, y\in \R^2$ and $r\in \N$. Let $z\in \R^2$ such that $\vert x-y\vert = \vert x-z\vert$. Then for every $A\subseteq \N$,
\begin{equation}
K^A_r(z) \geq K^A_t(y) + K^A_{r-t, r}(x\mid y) - K_{r-t}(x\mid p_{e^\prime} x, e^\prime) - O(\log r),
\end{equation}
where $e^\prime = \frac{y-z}{\vert y-z\vert}$ and $t = -\log \vert y-z\vert$.
\end{lem}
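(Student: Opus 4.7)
The plan is to reduce to the projection intersection lemma (Lemma \ref{lem:intersectionLemmaProjections}) applied to the direction $e'^\perp$ perpendicular to the chord $yz$, and then to convert a lower bound involving $e'^\perp$ into one involving $x$ by accounting for the cost of recovering $x$ from the projection data $(p_{e'}x, e')$.

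Geometrically, the assumption $|x-y| = |x-z|$ places $x$ on the perpendicular bisector of the chord $yz$, whose midpoint is $m = (y+z)/2 = y - 2^{-t-1} e'$ (using $y - z = 2^{-t} e'$). Consequently $x - m \perp e'$, which gives two exact identities:
\begin{equation*}
p_{e'^\perp} y = p_{e'^\perp} z \qquad \text{and} \qquad p_{e'} x = p_{e'} y - 2^{-t-1}.
\end{equation*}
The first identity lets me apply Lemma \ref{lem:intersectionLemmaProjections} with substitutions $x \mapsto y$, $w \mapsto z$, $e \mapsto e'^\perp$; the hypothesis $p_{e'^\perp} y = p_{e'^\perp} z$ holds exactly (hence \emph{a fortiori} up to precision $r$), and the associated scale parameter equals our $t = -\log|y-z|$, so the lemma yields
\begin{equation*}
K^A_r(z) \geq K^A_t(y) + K^A_{r-t,\, r}(e'^\perp \mid y) - O(\log r).
\end{equation*}

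The remaining step is to show
\begin{equation*}
K^A_{r-t,\, r}(e'^\perp \mid y) \geq K^A_{r-t,\, r}(x \mid y) - K_{r-t}(x \mid p_{e'} x, e') - O(\log r),
\end{equation*}
which I will establish by exhibiting a short program that computes $x$ at precision $r-t$ given $y$ at precision $r$. The program first invokes a witness to $K^A_{r-t,\, r}(e'^\perp \mid y)$ to produce $e'^\perp$ (hence $e'$, by a fixed $90^\circ$ rotation) at precision $r-t$; then it uses the second geometric identity $p_{e'} x = p_{e'} y - 2^{-t-1}$ to compute $p_{e'} x$ at precision $r-t$ from $y$ and $e'$, with $t$ encoded in $O(\log r)$ bits; finally, it invokes a witness to $K_{r-t}(x \mid p_{e'} x, e')$ to recover $x$ at precision $r-t$. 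Summing program lengths and using $K^A_{r-t} \leq K_{r-t}$ gives a chain-rule inequality that rearranges to the displayed bound; combining with the previous display proves the lemma.

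The main subtlety to manage is bookkeeping the precisions: the chord direction $e'$ is only determined to precision $r-t$ by knowledge of $y$ and $z$ at precision $r$, since the factor $2^{-t}$ in the denominator of $e' = (y-z)/|y-z|$ costs $t$ bits when dividing, and this loss then propagates through the inner product $p_{e'} y = \langle y, e'\rangle$. That this intermediate precision matches exactly the precision $r-t$ appearing in $K_{r-t}(x \mid p_{e'} x, e')$ on the right-hand side is what makes the two-step chain tight and yields the exact form of the claimed inequality.
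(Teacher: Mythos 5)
Your proof is correct, and it is the natural (and, to my knowledge, the standard) argument: the identity $p_{e'^\perp}y = p_{e'^\perp}z$ feeds directly into Lemma~\ref{lem:intersectionLemmaProjections} with direction $e'^\perp$, and the perpendicular-bisector identity $p_{e'}x = p_{e'}y - 2^{-t-1}$ lets you chain a witness for $K^A_{r-t,r}(e'^\perp\mid y)$ with one for $K_{r-t}(x\mid p_{e'}x,e')$ to bound $K^A_{r-t,r}(x\mid y)$, exactly reproducing the claimed inequality. Note the paper itself does not include a proof but cites \cite{Stull22c}; your precision bookkeeping (the $O(\log|y|)$-type losses in computing $p_{e'}y$ from approximations of $y$ and $e'$) is absorbed into the $O(\log r)$ error for fixed $y$, as is standard in this framework.
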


\begin{lem}\label{lem:goodtealgrowth}
Let $\ve >0$. Suppose that $[r_{i+1}, r_i] \in \mathcal{P}$ is a teal interval and $r_{i+1}$ is sufficiently large. Then, 
\begin{equation}
    K^{A,x}_{r_i, r_i, r_{i+1}}(y \mid \vert x - y \vert, y) \leq \ve r_i.
\end{equation}
In particular,
$K^{A,x}_{r_i, r_{i+1}}(\vert x - y\vert \mid \vert x - y\vert ) \geq K^{A,x}_{r_i, r_{i+1}}(y\mid y) - \ve r_{i}$.
\end{lem}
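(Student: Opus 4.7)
The approach is to mimic the proof of Lemma \ref{lem:distancesYellowTeal} (teal case) but to substitute Lemma \ref{lem:lowerBoundOtherPointDistance} for Lemma \ref{lem:intersectionLemmaProjections}. This swap is necessary because Lemma \ref{lem:distancesYellowTeal} demands the doubling condition $r\leq 2t$, which generally fails for teal intervals of $\mathcal{P}$: Lemma \ref{lem:notTooManyIntervalsYPartition} only guarantees $r_{i+1}\leq r_i/2$, with equality not forced. Lemma \ref{lem:lowerBoundOtherPointDistance} has no doubling restriction, at the cost of an extra projection term $K_{r-t}(x\mid p_{e'}x, e')$ which Theorem \ref{thm:mainProjection} of the previous section was built to control.

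Choose $\eta\in\Q$ with $\eta r_i = K^A_{r_i}(y) - 3\ve r_i$ and let $D=D^A(r_i,y,\eta)$ be the oracle of Lemma \ref{lem:oracles}, so $K^{A,D}_{r_i}(y)\leq \eta r_i + \ve r_i/2$ (condition (i) of Lemma \ref{lem:distanceEnumeration}). For condition (ii), pick $z\in B_{2^{-r_{i+1}}}(y)$ with $|x-z|=|x-y|$, let $e'=(y-z)/|y-z|$ and $s=-\log|y-z|\geq r_{i+1}$, and apply Lemma \ref{lem:lowerBoundOtherPointDistance} relative to $(A,D)$:
\begin{equation*}
K^{A,D}_{r_i}(z)\geq K^{A,D}_s(y) + K^{A,D}_{r_i-s,r_i}(x\mid y) - K^{A,D}_{r_i-s}(x\mid p_{e'}x, e') - O(\log r_i).
\end{equation*}
Condition (C3) together with Lemma \ref{lem:symmetry} lower bound the middle term by $K^A_{r_i-s}(x) - O(\log r_i)$ (using that $D$ is computable from $y$ and so does not assist with $x$); since the chord direction $e'$ differs from $e^\perp$ by a small rotation that is computable from $x$ and $y$, condition (C2) transfers to give $K^{A,x}_u(e')\geq \sigma u - O(\log u)$ for the needed range of $u$. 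Theorem \ref{thm:mainProjection} then upper bounds the third term by $K^A_{r_i-s}(x) - \sigma(r_i-s) + \frac{(\alpha-\sigma)\sigma}{\alpha}(r_i-s-t^*) + \ve r_i$ for an appropriate $t^*$.

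Cancelling the $K^A_{r_i-s}(x)$ terms, a case split on whether $K^{A,D}_s(y)$ equals $K^A_s(y)$ or $\eta r_i$ finishes condition (ii): in the former case, teal-ness of $[r_{i+1},r_i]$ together with the partition's defining equation $f(t_i)=f(r_i)-\sigma(r_i-t_i)+\frac{(\alpha-\sigma)\sigma}{\alpha}(r_i-2t_i)$ at $r_{i+1}=\max\{t_i,t'_i\}$ -- which was tailored precisely to absorb this projection correction -- yields the needed lower bound with margin $\ve r_i$; the latter case is immediate. Lemma \ref{lem:distanceEnumeration} then gives $K^{A,D,x}_{r_i,r_i,r_{i+1}}(y\mid|x-y|,y)\leq O(\ve r_i/\sigma)$, and Lemma \ref{lem:oracles}(iv) removes $D$ at cost $K^A_{r_i}(y)-\eta r_i = 3\ve r_i$, giving the main inequality after rescaling $\ve$. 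The ``in particular'' clause follows by symmetry of information: chain-expand
\begin{equation*}
K^{A,x}_{r_i,r_{i+1}}(y\mid y) \leq K^{A,x}_{r_i,r_{i+1}}(|x-y|\mid y) + K^{A,x}_{r_i,r_i,r_{i+1}}(y\mid|x-y|,y) + O(\log r_i),
\end{equation*}
and use that $|x-y|$ is computable from $y$ given the oracle $x$ to replace the first term on the right by $K^{A,x}_{r_i,r_{i+1}}(|x-y|\mid|x-y|) + O(\log r_i)$.

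The main obstacle is the case $K^{A,D}_s(y)=K^A_s(y)$: one must show that, uniformly in $s\in[r_{i+1},r_i]$, the projection correction $\frac{(\alpha-\sigma)\sigma}{\alpha}(r_i-s-t^*)$ is dominated by the teal slack $\sigma(r_i-s) - (K^A_{r_i}(y) - K^A_s(y))$ up to $O(\ve r_i)$. This is precisely where the defining equation for $t_i$ (and the maximality of $r_{i+1}=\max\{t_i,t'_i\}$) enters; the uniform complexity lower bound on $e'$, independent of $z$, also has to be handled carefully here.
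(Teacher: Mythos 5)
Your overall strategy matches the paper's: invoke Lemma \ref{lem:lowerBoundOtherPointDistance} instead of Lemma \ref{lem:intersectionLemmaProjections} to get an extra projection term, control that term with Theorem \ref{thm:mainProjection}, and observe that the defining equation for $r_{i+1}$ was engineered so that the teal growth from $s$ to $r_i$ absorbs the projection correction. However, there is a genuine gap: you apply Theorem \ref{thm:mainProjection} with $r = r_i - s$ and ``an appropriate $t^*$'' for \emph{all} $s \in [r_{i+1}, r_i]$. The natural (and, as the paper shows, correct) choice is $t^* = s$, but Theorem \ref{thm:mainProjection} requires $t < r$, i.e.\ $s < r_i - s$, i.e.\ $s < r_i/2$; it also requires $r = r_i - s$ to be sufficiently large. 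Both fail when $s$ is close to $r_i$. The paper handles this by a case split: when $s \geq r_i/2 - \log r_i$, one has $\vert p_e y - p_e z\vert \lesssim r_i^2 2^{-r_i}$ and so can apply Lemma \ref{lem:intersectionLemmaProjections} directly (no projection correction needed), and only when $s \leq r_i/2 - \log r_i$ does one use Lemma \ref{lem:lowerBoundOtherPointDistance} plus Theorem \ref{thm:mainProjection}. Without this split your argument breaks down for large $s$.

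Two further points. First, to verify the $t$-hypothesis of Theorem \ref{thm:mainProjection} you must show $s \geq \frac{\sigma(\sigma + 1 - \alpha)}{\alpha + \sigma + \sigma^2 - \sigma\alpha}(r_i - s)$; this is where Lemma \ref{lem:leftEndpointNotTooSmall} enters, and it is a concrete computation you omit. Second, you correctly flag that one must show, uniformly in $s$, that the teal growth dominates the projection correction, but you leave this as an ``obstacle.'' The resolution is simpler than you suggest: since $g(t) := f(t) - f(r_i) + \sigma(r_i - t) - \frac{(\alpha-\sigma)\sigma}{\alpha}(r_i - 2t)$ is positive at $t = r_i$ and $t_i$ is the \emph{largest} zero below $r_i$, one has $g(s) > 0$ for all $s > r_{i+1} \geq t_i$, which is exactly the bound $K^A_s(y) \geq K^A_{r_i}(y) - \sigma(r_i - s) + \frac{(\alpha-\sigma)\sigma}{\alpha}(r_i - 2s)$ needed to close the case $K^{A,D}_s(y) = K^A_s(y)$.
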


\begin{proof}
    Let $\ve\in\mathbb{Q}$ be sufficiently small, assume $r_{i+1}$ is sufficiently large. Let $\eta$ be the rational such that $\eta r_i=K^A_{r_i}(y)-4 \ve r_i$. Let $G = D^A(y, r, \eta)$ be the oracle of Lemma \ref{lem:oracles}. Then, relative to the oracle $(A, G)$, the first condition of Lemma \ref{lem:distanceEnumeration} holds. So, we need to show that for $z\in B_{2^{-r_{i+1}}}(y)$ such that $\vert x - y\vert = \vert x - z\vert$,
    \begin{equation*}
    K^{A, G}_r(z)\geq \eta r_i + \min \{\ve r_i, \sigma(r_i - s) - \ve r_i\}
    \end{equation*}
    where $s=-\log \vert y - z\vert.$

As observed in \cite{Stull22c}, if $s\geq \frac{r_i}{2} - \log r_i$, then $\vert p_e y - p_e z \vert \leq {r_i}^2 2^{-r_i} $. Hence, applying Lemma \ref{lem:intersectionLemmaProjections} and using the properties of our direction $e$, 
\begin{equation*}
    K^{A, G}_{r_i}(z)\geq K^{A, G}_{s}(y) + \sigma(r_i - s) - \frac{\ve}{2} r_i - O(\log r_i). 
\end{equation*}
Using the properties of $G$ and the fact that $[r_{i+1}, r_i]$ is teal establishes the desired bound in this case, so we assume $s\leq \frac{r_i}{2} - \log r_i$. First, apply Lemma \ref{lem:lowerBoundOtherPointDistance} to obtain
\begin{equation*}
    K^{A, G}_r(z) \geq K^{A, G}_t(y) + K^{A, G}_{r-t, r}(x\mid y) - K^{A, G}_{r-t}(x\mid p_{e^\prime} x, e^\prime) - O(\log r)
\end{equation*}
We want to apply Theorem \ref{thm:mainProjection}, so we check that its conditions are satisfied with respect to x, $e^\prime, \ve, C, t=s,$ and $r=r_i-s$. The first condition of the theorem is satisfied using (C1), and the third is satisfied using (C2), so it remains to verify the bound on $t$. We know $r_{i+1}<s$, and can additionally apply Lemma \ref{lem:leftEndpointNotTooSmall} to obtain
\begin{align*}
    s&\geq \dfrac{(\alpha - \sigma)\sigma}{\alpha D + \alpha \sigma -2 \sigma^2} r_i - \ve r_i \\
    &\geq \dfrac{(\alpha - \sigma)\sigma}{\alpha D   - \sigma^2} (r_i-s) - \ve r_i\\
    &\geq \dfrac{\sigma (\sigma + 1-\alpha)}{\alpha + \sigma + \sigma^2 -\alpha \sigma } (r_i - s)
\end{align*}
where the last inequality follows if $\ve$ is small enough depending on $\sigma$ and $D$, which is no significant encumbrance. So, we may apply Theorem \ref{thm:mainProjection}, the properties of $G$, and (C3) with symmetry of information to obtain
\begin{align*}
K^{A, G}_{r_i}(z) &\geq K^{A, G}_s(y) + K^{A, G}_{r_i-s, r_i}(x\mid y) - K^{A, G}_{r_i-s}(x\mid p_{e^\prime} x, e^\prime) - O(\log r)\\
&\geq K^{A}_s(y) + K^{A}_{r_i-s}(x) - K^A_{r_i-s}(x\mid p_{e^\prime} x, e^\prime) - O(\log r)\\
&\geq K^{A}_s(y) + K^{A}_{r_i-s}(x)  - \left(K^A_{r_i-s}(x) - \sigma (r_i-s) + \frac{(\alpha - \sigma)\sigma}{\alpha}(r_i-2s)\right)- \ve r_i-O(\log r)\\
&\geq K^{A}_s(y)  + \sigma (r_i-s) - \frac{(\alpha - \sigma)\sigma}{\alpha}(r_i-2s)- \ve r_i-O(\log r)\\
\end{align*}

We may use \eqref{eq:smallerPrecisionBound} and the teal property to get the bound
\begin{equation*}
        K^A_{s}(y) \geq K^A_{r_{i}}(y) -\sigma (r_i-s) + \frac{(\alpha-\sigma)\sigma}{\alpha}(r_i - 2 s) - \ve r_i
\end{equation*}
which in conjunction with the previous equation gives
\begin{align*}
    K^{A, G}_{r_i}(z)\geq K_{r_i}^A(y) - 3 \ve r_i
    \geq \eta r_i + \ve r_i
\end{align*}
Thus the second condition of Lemma \ref{lem:distanceEnumeration} is satisfied, and applying it gives the desired bound. 
\end{proof}

\subsection{Main effective theorem}
We are now in a position to prove our main effective theorem. This is the effective analog of Theorem \ref{thm:maintheorem}, the main theorem of this paper.
\begin{thm}\label{thm:mainThmEffDim}
Suppose that $x, y\in\R^2$, $e = \frac{y-x}{\vert y-x\vert}$, $0 < \sigma < 1$ and $A\subseteq\N$  satisfy the following.
\begin{itemize}
\item[\textup{(C1)}] $\dim^A(x), \dim^A(y) > \sigma$
\item[\textup{(C2)}] $K^{A,x}_r(e) > \sigma r - O(\log r)$ for all $r$.
\item[\textup{(C3)}] $K^{A, x}_r(y) \geq K^{A}_r(y) - O(\log r)$ for all sufficiently large $r$. 
\item[\textup{(C4)}] $K^{A}_{t,r}(e\mid y) > \sigma t - O(\log r)$ for all sufficiently large $r$ and $t \leq r$.
\end{itemize}
Then 
\begin{equation*}
\dim^{x,A}(\vert x-y\vert) \geq  \sigma \left(1 - \frac{\alpha D - \sigma(D+\alpha -\sigma)}{(\sigma+1)(\alpha D - \sigma^2) -\sigma^2(\alpha + D - 2 \sigma)}\right),
\end{equation*}
where $D = \max\{D_x, D_y\}$ and $\alpha = \min\{D, 1+\sigma\}$. 
\end{thm}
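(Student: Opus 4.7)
The plan is to apply Lemma \ref{lem:boundGoodPartitionDistance} to the partition $\mathcal{P}$ constructed above, combined with condition (C3) and symmetry of information, and then optimize. Writing $L_i := r_i - r_{i+1}$ and $K^A_i := K^A_{r_i, r_{i+1}}(y \mid y)$, that lemma gives
\[
K^{A,x}_r(|x-y|) \geq K^A_r(y) - \sum_{i \in Y} \min\{K^A_i - \sigma L_i, L_i\} - O(\ve r),
\]
where $Y$ is the set of yellow intervals of $\mathcal{P}$ and $T$ the teal ones. Telescoping $K^A_r(y) = \sum_i K^A_i$ rearranges this to $K^{A,x}_r(|x-y|) \geq B_1 - O(\ve r)$ with
\[
B_1 := \sum_{i \in Y} \max\{\sigma L_i, K^A_i - L_i\} + \sum_{i \in T} K^A_i.
\]

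Next, I establish two complementary lower bounds on $B_1$. Let $L_Y, L_T$ denote total yellow and teal length and set $\beta := \sigma^2(D+\alpha-2\sigma)/(\alpha D - \sigma^2)$. Using the trivial estimate $\max\{\sigma L_i, K^A_i - L_i\} \geq \sigma L_i$ on yellow intervals together with $K^A_i \geq (\beta - \ve) L_i$ on teal intervals (Lemma \ref{lem:slopeBoundTeal}) gives bound (a): $B_1 \geq \sigma L_Y + \beta L_T - O(\ve r)$. Using instead $\max\{\sigma L_i, K^A_i - L_i\} \geq K^A_i - L_i$ on yellow gives bound (b): $B_1 \geq \sum_i K^A_i - L_Y = K^A_r(y) - L_Y - O(\log r)$. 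The key observation is that (a) is strong when $L_T$ is small while (b) is strong when $L_T$ is large, so a suitable convex combination eliminates the $L_T$-dependence entirely.

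Specifically, taking $\lambda \cdot (\text{a}) + (1-\lambda) \cdot (\text{b})$ with $\lambda := 1/(1+\sigma-\beta) \in (0,1]$ (noting that $\beta \leq \sigma$ since $A := \alpha D - \sigma(D+\alpha-\sigma) = (\alpha-\sigma)(D-\sigma) \geq 0$) and substituting $L_Y = r - L_T - O(1)$ yields
\[
B_1 \geq \frac{\beta\, r + (\sigma-\beta)\, K^A_r(y)}{1+\sigma-\beta} - O(\ve r).
\]
By (C1), $K^A_r(y) \geq \sigma r - O(\ve r)$, so $B_1 \geq \frac{\sigma^2 + \beta(1-\sigma)}{1+\sigma-\beta}\, r - O(\ve r)$. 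A direct algebraic computation, using $\sigma-\beta = \sigma A/(\alpha D - \sigma^2)$ and $1+\sigma-\beta = B/(\alpha D - \sigma^2)$ with $B = (\sigma+1)(\alpha D - \sigma^2) - \sigma^2(\alpha+D-2\sigma)$, shows that this coefficient equals $\sigma(1 - A/B)$. Dividing by $r$, taking $\liminf$ as $r \to \infty$, and then letting $\ve \to 0$ completes the proof.

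The main technical work was already done in the previous subsection (the construction of $\mathcal{P}$ and the slope bound on teal intervals, Lemma \ref{lem:slopeBoundTeal}); granted these, the optimization via the convex combination of the two complementary bounds is essentially elementary. The one subtle point is that bound (b), which trivially discards the $\sigma L_i$ half of the max, must be combined with (a) in the correct proportion to reconstruct the stated coefficient.
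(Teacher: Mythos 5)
Your proof is correct and follows essentially the same route as the paper: sum the per-interval contributions over the partition $\mathcal{P}$ of the preceding subsection (yellow intervals via the "min" bound, teal via Lemma~\ref{lem:goodtealgrowth} and the slope bound Lemma~\ref{lem:slopeBoundTeal}), then optimize over the yellow/teal split, where your convex combination with $\lambda = 1/(1+\sigma-\beta)$ is algebraically the same step as the paper's choice of $L$ equating the two terms inside the $\max$ in \eqref{eq:distanceBoundTermsofL}. One small misattribution worth fixing: the master inequality $K^{A,x}_r(|x-y|) \ge K^A_r(y) - \sum_{i\in Y}\min\{K^A_i - \sigma L_i, L_i\} - O(\ve r)$ cannot come from applying Lemma~\ref{lem:boundGoodPartitionDistance} to $\mathcal{P}$ itself, since the teal blocks of $\mathcal{P}$ satisfy $r_{i+1}\le r_i/2$ (Lemma~\ref{lem:notTooManyIntervalsYPartition}) and hence violate the doubling requirement of a good partition; it instead follows, exactly as in the paper, from symmetry of information plus Lemma~\ref{lem:boundComplexityYellowNewPartition} on the yellow blocks (each of which admits a good sub-partition) and Lemma~\ref{lem:goodtealgrowth} on the teal blocks, after which your telescoping and the subsequent optimization go through verbatim.
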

\begin{proof}

Let $\sigma^\prime \in \Q$ satisfying $0 < \sigma^\prime < \sigma$ and let $\ve>0$. Let $r$ be sufficiently large. Let $\mathcal{P} = \{[r_{i+1}, r_i]\}$ be the partition of $[1, r]$ defined in the previous section. Let $L$ be the total length of the $\sigma^\prime$-yellow intervals. Recall that if $[r_{i+1}, r_i]$ is yellow, then we have that
    \begin{equation*}
        K^{A,x}_{r_i, r_{i+1}}(\vert x - y\vert \mid \vert x - y\vert ) \geq \sigma^\prime(r_i - r_{i+1}) - \ve r_{i}
    \end{equation*}

By applications of the symmetry of information and Lemmas
\ref{lem:boundComplexityYellowNewPartition} and \ref{lem:goodtealgrowth} , for sufficiently large $r$ 
\begin{align*}
    K^{A,x}_r(\vert x - y\vert ) &= \sum\limits_{i\in Y} K^{A,x}_{r_i, r_{i+1}}(\vert x - y\vert \mid \vert x - y\vert ) + \sum\limits_{i\in Y^C} K^{A,x}_{r_i, r_{i+1}}(\vert x - y\vert \mid \vert x - y\vert )\\
    &\geq \sigma^\prime L -\frac{\ve}{3} r+ \sum\limits_{i\in Y^C} K^{A,x}_{r_i, r_{i+1}}(\vert x - y\vert \mid \vert x - y\vert )\\
    &\geq \sigma^\prime L -\frac{2\ve}{3}r + \sum\limits_{i\in Y^C} K^{A,x}_{r_i, r_{i+1}}(y\mid y)\\
    &\geq \sigma^\prime L + \frac{\sigma^2(D+\alpha-2\sigma)}{\alpha D -(\sigma^{\prime})^2}(r - L)-\ve r.\label{eq:notmuchteal}
\end{align*}

We also have the bound
\begin{align*}
    d_y r &\leq K^{A,x}_r(y)+\frac{\ve}{3}\\
    &= \sum\limits_{i\in Y} K^{A,x}_{r_i, r_{i+1}}(y\mid y) + \sum\limits_{i\in Y^C} K^{A,x}_{r_i, r_{i+1}}(y\mid y )\\
    &\leq L +\sum\limits_{i\in Y} K^{A,x}_{r_i, r_{i+1}}(\vert x - y\vert \mid \vert x -y\vert )+ \frac{2\ve}{3} \\
    &\;\;\;\;\;\;\;\;\;\;\;+\sum\limits_{i\in Y^C} K^{A,x}_{r_i, r_{i+1}}(\vert x - y\vert \mid \vert x - y\vert)\\
    &\leq L + K^{A,x}_r(\vert x - y\vert ) +\ve r.
\end{align*}
Hence,
\begin{equation}\label{eq:distanceBoundTermsofL}
    K^{A,x}_r(\vert x - y\vert) \geq \max\{\sigma^\prime L + \frac{(\sigma^\prime)^2(D+\alpha-2\sigma^\prime)}{\alpha D -(\sigma^\prime)^2}(r - L), \sigma^\prime r - L\}-\ve r.
\end{equation}

The first term is increasing in $L$, and the second term is decreasing in $L$, so we can set them equal to find the minimum over all $L$, which yields

\begin{equation}
K^{x,A}_r(\vert x-y\vert) \geq \sigma^\prime \left(1 - \frac{\alpha D - \sigma^\prime(D+\alpha -\sigma^\prime)}{(\sigma^\prime+1)(\alpha D - (\sigma^\prime)^2) -(\sigma^\prime)^2(\alpha + D - 2 \sigma^\prime)}\right)r -\ve r.
\end{equation}

\noindent Since we can take $\sigma^\prime$ arbitrarily close to $\sigma$ and $\ve$ as small as desired, the conclusion follows.

\end{proof}

\section{Reduction to the classical bounds}
Recall the conditions on points in our main effective theorem. For an oracle $A\subseteq\N$ we want $x, y\in\R^2$ and $e = \frac{y-x}{\vert x - y\vert}$ to satisfy.
\begin{itemize}
\item[\textup{(C1)}] $\dim^A(x), \dim^A(y) > \sigma$
\item[\textup{(C2)}] $K^{A,x}_r(e) > \sigma r - O(\log r)$ for all $r$.
\item[\textup{(C3)}] $K^{A, x}_r(y) \geq K^{A}_r(y) - O(\log r)$ for all sufficiently large $r$. 
\item[\textup{(C4)}] $K^{A}_{t,r}(e\mid y) > \sigma t - O(\log r)$ for all sufficiently large $r$ and $t \leq r$.
\end{itemize}

We need to prove the existence of points satisfying these conditions.

Let $K, t\geq 0$, $c\in (0,1]$. Let $\mu,\nu \in \mathcal{P}(\R^d)$ with $\text{spt}(\mu) =: X$ and $\text{spt}(\nu) =: Y$. We say that $(\mu, \nu)$ has \textbf{$(t, K, c)$-thin tubes} if there is a Borel set $G \subset X \times Y$ with $(\mu\times\nu)(G) \geq c$ with the following property. If $x\in X$, then
\begin{center}
    $\nu(T\cap G\vert_x) \leq K r^t$ for all $r > 0$ and all $r$-tubes $T$ containing $x$.
\end{center}
We say that $(\mu, \nu)$ has $t$-thin tubes if $(\mu, \nu)$ has $(t, K, c)$-thin tubes for some $K, c > 0$.

We will need the following corollary proved by Orponen, Shmerkin and Wang \cite{OrpShmWan22} (Corollary 2.21).
\begin{cor}\label{cor:OrpShmWan}
Let $s \in (0,1]$ and let $\mu, \nu \in \mathcal{P}(\R^2)$ be measures which satisfy the $s$-dimensional Frostman condition $\mu(B(x,r)) \lesssim r^s$, $\nu(B(x,r)) \lesssim r^s$, for all $x\in \R^2$ and $r > 0$. Assume $\mu(\ell)\nu(\ell)<1$ for every line $\ell$. Then $(\mu, \nu)$ has $\sigma$-thin tubes for all $0\leq \sigma < s$.
\end{cor}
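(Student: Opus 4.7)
The plan is to deduce the thin tubes conclusion from the main radial projection theorem of Orponen--Shmerkin--Wang by translating $r$-tubes through $x$ into arcs on $S^1$ under the radial projection $\pi_x(y) = (y-x)/|y-x|$, and then performing an exhaustion to replace the $x$-dependent constants by uniform ones on a positive-measure set $G$.

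First I would set up the geometric dictionary. For $x \in \R^2$ and an $r$-tube $T$ through $x$, the ``far'' part $T \cap \{y : |y-x| \geq 2r\}$ radially projects from $x$ into an arc of $S^1$ of length $O(r)$, while the ``near'' part $T \cap B(x, 2r)$ is contained in a ball of radius $2r$ whose $\nu$-measure is $\lesssim r^s$ by the Frostman hypothesis on $\nu$. Thus the thin tubes bound $\nu(T \cap G|_x) \leq K r^\sigma$ will follow if the push-forward measure $(\pi_x)_*(\nu|_{G|_x})$ satisfies a uniform $\sigma$-Frostman estimate on arcs of $S^1$.

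Second, I would invoke the main radial projection result of Orponen--Shmerkin--Wang as a black box. Under the hypotheses stated (Frostman exponent $s$ and $\mu(\ell)\nu(\ell) < 1$ on every line), that theorem produces, for each $\sigma < s$ and $\mu$-a.e. $x \in X$, a Borel set $Y_x \subseteq Y$ of full $\nu$-measure on which $(\pi_x|_{Y_x})_*\nu$ obeys a $\sigma$-Frostman bound with some finite constant $K_x$. To pass to uniform constants I would run a standard exhaustion: for $K \in \N$, set $X_K = \{x \in X : K_x \leq K\}$ and choose $K$ so that $\mu(X_K) \geq 1 - c/2$; then a measurable selection argument (combined with an Egorov-type step applied to the Frostman quantifier over dyadic scales) produces a jointly Borel set $G \subseteq X_K \times Y$ with $(\mu \times \nu)(G) \geq c$ such that the desired bound holds for every $x \in X_K$ and every $r$-arc $I \subset S^1$ simultaneously. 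Combining the far- and near-part estimates yields the $(\sigma, K', c)$-thin tubes property.

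The main obstacle is of course the Orponen--Shmerkin--Wang radial projection theorem itself, which rests on their recent discretized incidence estimates for planar sets of dimension $\leq 1$ and is the genuinely deep input. The translation step from arcs to tubes is geometrically routine, and the exhaustion/uniformization step is a standard measure-theoretic manipulation; the delicate part of the latter is arranging joint Borel measurability of $G$, which can be handled by working with a countable family of dyadic arcs and invoking the Borel measurability of $x \mapsto (\pi_x)_*(\nu|_{Y_x})(I)$ for each fixed arc $I$.
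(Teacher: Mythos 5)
The paper does not prove this statement at all: it is quoted verbatim as an external result (Corollary~2.21 of Orponen--Shmerkin--Wang~\cite{OrpShmWan22}), and the entire ``proof'' in the paper consists of a citation. So there is no internal argument to compare your sketch against; what can be assessed is whether your proposed derivation stands on its own.

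As a standalone derivation it has a logical difficulty. You propose to obtain thin tubes from ``the main radial projection result'' of Orponen--Shmerkin--Wang, but in that paper the logical order is the reverse: the $\sigma$-thin-tubes statement \emph{is} the main technical theorem, and the radial projection dimension bounds (statements about $\dim_H \pi_x(Y)$) are deduced from it. If the black box you invoke is the dimension bound $\dim_H \pi_x(Y) \geq \min\{s,1\}$ for $\mu$-a.e.\ $x$, that is strictly weaker than what you need, because a lower bound on the Hausdorff dimension of a pushforward measure does not yield a uniform Frostman estimate on a full-measure set --- only on a set of measure $1-\ve$, and more importantly the dimension bound concerns the image set or the a.e.\ local dimension, not a pointwise Frostman inequality $\nu(\pi_x^{-1}(I)) \lesssim |I|^\sigma$ valid for \emph{all} arcs $I$. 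Your claim that the radial projection theorem ``produces \ldots a Borel set $Y_x \subseteq Y$ of full $\nu$-measure on which $(\pi_x|_{Y_x})_* \nu$ obeys a $\sigma$-Frostman bound'' is precisely the nontrivial content of the thin-tubes theorem, so the argument is circular unless you specify a genuinely independent intermediate statement. The tube-to-arc dictionary (splitting a tube into the part near $x$, controlled by the Frostman condition on $\nu$, and the far part, controlled by an arc bound) and the exhaustion/Egorov step to uniformize constants are both standard and fine as far as they go, but they are the easy half of the equivalence. The hard half --- turning the line-non-concentration hypothesis $\mu(\ell)\nu(\ell) < 1$ into a quantitative arc bound --- is exactly the content of the discretized incidence estimates in \cite{OrpShmWan22}, and your sketch does not engage with them; it repackages the difficulty into a black box that is at least as strong as the conclusion.
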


We use the existence of thin tubes to establish the existence of points $x, y$ satisfying properties (C1)-(C3)
\begin{lem}\label{lem:C1-C3}
Let $A\subseteq\N$, $X, Y\subseteq\mathbb{R}^2$ be compact sets, $s \in (0,1]$ and $\mu, \nu \in \mathcal{P}(\R^2)$ be measures whose support is $X$ and $Y$, respectively, which satisfy the $s$-dimensional Frostman condition. Assume $\mu(\ell)\nu(\ell)<1$ for every line $\ell$ and $\nu$ is computable with respect to $A$. Then, for every $\sigma < \min\{s, 1\}$,
\begin{center}
    $\dim_H\{x\in X: \exists y\in Y \text{ satisfying (C1)-(C3)}\}\geq\sigma$
\end{center}
\end{lem}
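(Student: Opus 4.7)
The lemma is a Hausdorff dimension lower bound, so we apply the point-to-set principle (Theorem~\ref{thm:p2s}): it suffices, given an arbitrary oracle $A'\supseteq A$, to exhibit a single $x\in S := \{x\in X : \exists y\in Y\text{ satisfying (C1)-(C3)}\}$ with $\dim^{A'}(x)\ge\sigma$. Fix such an $A'$, pick $\sigma'\in(\sigma,\min\{s,1\})$, and let $G\subseteq X\times Y$ (with constants $K,c$) be the Borel $\sigma'$-thin-tubes set furnished by Corollary~\ref{cor:OrpShmWan}. Enlarge $A'$ to an oracle $B$ relative to which $X,Y$ are effectively compact and both $\mu,\nu$ are computable. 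Since $\dim^B\le\dim^{A'}$, any $x\in S$ with $\dim^B(x)\ge\sigma$ will suffice.

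\textbf{(C1) and (C3) via effective Frostman.} The standard effective Frostman estimates give $\dim^B(x)\ge s$ for $\mu$-a.e.\ $x$ (and hence $\dim^A(x)\ge s>\sigma$), and for $\nu$-a.e.\ $y$ one has $K^B_r(y) = -\log\nu(B(y,2^{-r})) + O(\log r)$. Since $\nu$ is still computable relative to $(B,x)$ for every individual $x$, the same estimate yields $K^{B,x}_r(y) \ge -\log\nu(B(y,2^{-r})) - O(\log r)$ for $\nu$-a.e.\ $y$. Combining this with the upper bound $K^A_r(y)\le -\log\nu(B(y,2^{-r})) + O(\log r)$, which holds for every $y$ because $\nu$ is computable relative to $A$, gives, for every $x$ and $\nu$-a.e.\ $y$,
\[
K^{A,x}_r(y)\;\ge\;K^{B,x}_r(y)\;\ge\;K^A_r(y) - O(\log r),
\]
which is (C3). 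The same $\nu$-full-measure set $Y_0$ also satisfies $\dim^A(y)\ge s>\sigma$, i.e., (C1) for $y$.

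\textbf{(C2) via thin tubes, and the main obstacle.} By Fubini, $X_0 := \{x\in X:\nu(G|_x)\ge c/2\}$ has $\mu(X_0)\ge c/2$; intersecting with the $\mu$-full-measure set on which $\dim^B(x)\ge s$ leaves positive $\mu$-measure, so fix such an $x$. For each precision $r\ge r_0$, there are at most $O(2^{\sigma r}/r^k)$ points $e_0\in S^1$ at precision $r$ with $K^{B,x}_r(e_0)\le\sigma r - k\log r$, and the $y\in G|_x$ whose direction $e(x,y)$ lies within $2^{-r}$ of some such $e_0$ sit in the union of the corresponding $O(2^{-r})$-tubes through $x$ (using boundedness of $X\cup Y$), which by the thin-tubes bound has total $\nu$-measure at most $O(2^{(\sigma-\sigma')r}/r^k)$. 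Summing over $r\ge r_0$ and using $\sigma'>\sigma$ brings the total below $c/4$. The $y\in Y_0\cap G|_x$ avoiding every such bad tube thus form a set of $\nu$-measure at least $c/4>0$; any such $y$ witnesses (C1), (C2), (C3), so $x\in S$ with $\dim^{A'}(x)\ge\dim^B(x)\ge s>\sigma$. The decisive step is this direction-counting: the combinatorial count $O(2^{\sigma r}/r^k)$ of low-complexity directions must beat the thin-tubes decay $O(2^{-\sigma'r})$, forcing $\sigma<\sigma'<\min\{s,1\}$, which is precisely why the lemma's hypothesis is $\sigma<\min\{s,1\}$ and why the full strength of the Orponen--Shmerkin--Wang thin-tubes theorem is essential.
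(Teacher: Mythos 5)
Your proof is correct and follows the same overall strategy as the paper: invoke Corollary~\ref{cor:OrpShmWan} to get a thin-tubes set $G$, use Fubini to find a large set of $x$ with $\nu(G\vert_x)$ bounded below, use an effective-Frostman argument to secure (C1) and (C3) for $\nu$-a.e.\ $y$ (the paper packages this step as a citation to Lemma~36 of \cite{Stull22c}, whereas you re-derive it from the estimate $K^B_r(y)\approx -\log\nu(B(y,2^{-r}))$), and then use the thin-tubes bound to obtain a direction of high complexity for (C2). The one genuine technical divergence is in (C2): the paper observes that the thin-tubes condition forces $\dim_H(\pi_x G')\ge\sigma'$ and then applies the point-to-set principle to $\pi_x G'$ relative to $(A,x)$ to extract a single $e$ with $\dim^{A,x}(e)>\sigma$; you instead run a direct Borel--Cantelli count, bounding at each scale $r$ the number of low-complexity $e_0\in S^1$ by $2^{\sigma r}/r^k$ and the $\nu$-mass of the corresponding tubes by $K2^{-\sigma'r}$, then summing. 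The two routes carry exactly the same information---your counting is essentially the effective content behind the point-to-set lower bound for Frostman measures---but yours produces a positive-$\nu$-measure set of admissible $y$, is more self-contained, and makes explicit where the strict inequality $\sigma<\min\{s,1\}$ (and hence the full strength of the Orponen--Shmerkin--Wang input) is consumed. Both arguments then close the same way: the good $x$ has $\dim$ relative to any oracle at least $s>\sigma$, so the set of such $x$ has Hausdorff dimension $\ge s>\sigma$.
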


\begin{proof}
By Corollary \ref{cor:OrpShmWan}, $(\mu, \nu)$ has $\sigma^\prime$-thin tubes, for every $\sigma < \sigma^\prime < s$. Let $G\subseteq X\times Y$ be the Borel set in the definition of $\sigma^\prime$-thin tubes. Since $(\mu \times \nu)(G) \geq c$, the set
\begin{center}
    $X_1 = \{x\in X\mid \nu(G\vert_x) > 0\}$
\end{center}
has positive $\mu$-measure. The Hausdorff dimension of $X_1$ is at least  $s>\sigma$, so, by the point to set principle, the set
\begin{center}
    $X_2 = \{x\in X_1 \mid \dim^A(x) > \sigma\}$
\end{center}
has Hausdorff dimension $s$. 

Let $x$ be any point in $X_2$, and let
$G^\prime \subseteq G\vert_x$ be the set of points in $G\vert_x$ satisfying (C1) and (C3). Then, since $\nu(G\vert_x) > 0$, by \cite{Stull22c} (Lemma 36), $\nu(G^\prime) = \nu(G\vert_x) >0$. 

Since $(\mu, \nu)$ has $\sigma^\prime$-thin tubes, we must have 
\begin{center}
    $\nu(T\cap G^\prime) \leq K r^{\sigma^\prime}$ for all $r > 0$ and all $r$-tubes $T$ containing $x$.
\end{center}
In particular, $\dim_H(\pi_x G^\prime) \geq \sigma^\prime$. By the point-to-set principle, this implies that there is a direction $e \in \pi_x G^\prime$ such that $\dim^{A,x}(e) > \sigma$. Let $e$ be such a direction, and let $y \in G^\prime$ such that $e = \frac{y-x}{\vert x - y\vert}$. Then this choice of $x$ and $y$ satisfies (C1)-(C3).
\end{proof}

Now, we prove that (C1)-(C3) imply (C4)

\begin{lem}\label{lem:C1toC3ImplyC4}
    Assume $A\subseteq\N$, $x, y\in\R^2$, $e = \frac{y-x}{\vert x - y\vert}$ and $\sigma \in (0, 1)$ satisfy (C1)-(C3). Then $x$ and $y$ also satisfy (C4), i.e.,
    \begin{center}
        $K^A_{t, r}(e \mid y) > \sigma t - O(\log r)$,
    \end{center}
    for all sufficiently large $r\in\N$ and $t\leq r$.
\end{lem}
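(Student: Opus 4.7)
The plan is to apply Lemma~\ref{lem:symmetry} in a ``dual'' setting, with the direction $e$ in the role of the oracle and $y$ in the role of the target. If we can verify the hypothesis
\[
K^{A,e}_r(y) \geq K^A_r(y) - O(\log r),
\]
then conclusion (ii) of Lemma~\ref{lem:symmetry} yields $K^A_{t,r}(e \mid y) \geq K^A_t(e) - O(\log r)$ for every $t \leq r$. Combined with (C2) and the trivial monotonicity $K^A_t(e) \geq K^{A,x}_t(e) \geq \sigma t - O(\log t)$ (adding $x$ as an oracle can only reduce complexity), this gives (C4) as stated.

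The first step is to leverage (C3) to establish that $x$ and $y$ are complexity-independent. Writing the chain rule as $K^A_r(x,y) = K^A_r(x) + K^{A,x}_r(y) \pm O(\log r)$ and combining (C3) with the trivial upper bound $K^{A,x}_r(y) \leq K^A_r(y) + O(\log r)$ gives $K^A_r(x,y) = K^A_r(x) + K^A_r(y) \pm O(\log r)$, and reading the chain rule in the opposite order yields $K^{A,y}_r(x) \geq K^A_r(x) - O(\log r)$. Next, I decompose $K^A_r(x,y,e) = K^A_r(x,y) \pm O(\log r)$ (since $e$ is a computable function of $(x,y)$) with $e$ taken as the first variable in the chain rule, obtaining
\[
K^A_r(x) + K^A_r(y) = K^A_r(e) + K^{A,e}_r(y) + K^{A,e,y}_r(x) + O(\log r).
\]
The desired hypothesis $K^{A,e}_r(y) \geq K^A_r(y) - O(\log r)$ thus reduces to the inequality $K^{A,e,y}_r(x) \leq K^A_r(x) - K^A_r(e) + O(\log r)$, which (since $K^A_r(e) \geq \sigma r$ by (C2) and the oracle inequality) follows from
\[
K^{A,e,y}_r(x) \leq K^A_r(x) - \sigma r + O(\log r).
\]
Geometrically this says that once $y$ is known as an oracle and $e$ is pinned down, the remaining position of $x$ along the ray through $y$ in direction $-e$ carries at most $K^A_r(x) - \sigma r$ bits.

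Establishing this last inequality is the hard part. Applying the same chain-rule decomposition with $x$ in the oracle role instead yields the parallel bound $K^{A,x,e}_r(|x-y|) \leq K^A_r(y) - \sigma r + O(\log r)$ as an immediate consequence of (C2) and the identity $K^{A,x}_r(y) = K^A_r(y) \pm O(\log r)$. The desired inequality has $x$ and $y$ swapped both in the oracle and on the right-hand side, so it is natural to try to derive it by exploiting the near-symmetry between $x$ and $y$ established above. I expect the proof to complete this step by combining the $x$-oracle bound with the lower bound $K^{A,y}_r(x) \geq K^A_r(x) - O(\log r)$ through a further chain-rule manipulation that effectively interchanges the roles of $x$ and $y$. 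Making this swap rigorous—going from a bound on $K^{A,x,e}_r(|x-y|)$ to one on $K^{A,y,e}_r(x)$ at the same precision and with the same additive slack—is the most delicate point of the argument and appears to require the simultaneous use of all three of (C1)--(C3).
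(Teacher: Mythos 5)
Your proposal diverges from the paper's argument and has gaps that would prevent it from closing. The central issue is that the chain-rule decomposition you write, $K^A_r(x) + K^A_r(y) = K^A_r(e) + K^{A,e}_r(y) + K^{A,e,y}_r(x) + O(\log r)$, is not valid: symmetry of information (Lemma~\ref{lem:unichain}) is an identity for the \emph{conditional} complexities $K_r(\cdot \mid \cdot)$, and the oracle complexities $K^{\cdot}_r(\cdot)$ relate to these only via the one-sided inequality (\ref{eq:OraclesDontIncrease}). Replacing conditional terms by oracle terms therefore gives only $K^A_r(x) + K^A_r(y) \geq K^A_r(e) + K^{A,e}_r(y) + K^{A,e,y}_r(x) - O(\log r)$, which runs in the wrong direction for extracting a \emph{lower} bound on $K^{A,e}_r(y)$. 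There is also a sign error in the subsequent reduction: since $K^A_r(e) \geq \sigma r$, the target $K^{A,e,y}_r(x) \leq K^A_r(x) - K^A_r(e) + O(\log r)$ is \emph{stronger} than, not implied by, $K^{A,e,y}_r(x) \leq K^A_r(x) - \sigma r + O(\log r)$; and since $e \in S^1$ is one-dimensional, nothing in (C1)--(C3) caps $K^A_r(e)$ below roughly $r$, so the discrepancy $K^A_r(e) - \sigma r$ can be of order $r$. Finally, you explicitly stop short of proving the key last inequality, so the argument is unfinished even granting the earlier steps.

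The paper avoids the detour through Lemma~\ref{lem:symmetry}(ii) entirely. Starting from the subadditivity bound $K^A_{t,r}(x \mid y) \leq K^A_{t,r}(e \mid y) + K^A_{t,t,r}(x \mid e, y) + O(\log r)$, it observes that $p_{e^\perp}x = p_{e^\perp}y$ so that the second term is bounded by $K^A_t(x \mid p_{e^\perp}x, e^\perp) \leq K^A_t(x) - \sigma t + O(\log t)$, a one-interval case of the projection bound --- this is where (C1) and (C2) enter. A symmetry-of-information computation together with (C3) then gives $K^A_{t,r}(x \mid y) \geq K^A_t(x) - O(\log r)$, and rearranging yields (C4) directly. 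Your geometric intuition (pinning down $x$ once $y$ and $e$ are in hand) is the right one, but the paper realizes it at the level of conditional complexities at finite precision, rather than by attempting the stronger oracle-level independence claim $K^{A,e}_r(y) \geq K^A_r(y) - O(\log r)$, which is both harder and unnecessary.
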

\begin{proof}
    Let $\ve > 0$, $r\in\N$ be sufficiently large and $t\leq r$. By the symmetry of information, and the fact that $e =\frac{y-x}{\vert x - y\vert}$,
    \begin{align*}
        K^{A}_{t, r}(x \mid y) &\leq K^A_{t,r}(e\mid y) + K^A_{t,t,r}(x\mid e, y) + O(\log r)\\
        &\leq K^A_{t,r}(e\mid y) + K^A_{t}(x\mid p_{e^\bot} x, e^\bot) + O(\log r).
    \end{align*}
    We note that $e^\bot$ and $e$ are Lipschitz-bi-computable, and so $K_r(e) = K_r(e^\bot) + O(\log r)$. By a pointwise analog of Marstrand's projection theorem (which was essentially proven in \cite{LutStu18}, but is also a result of the projection bound in section 3 arising from a one interval partition), we have
    \begin{equation*}
     K^A_{t}(x\mid p_{e^\bot} x, e^\bot) < K^A_t(x) - \sigma t + O(\log t),
    \end{equation*}
    and so
    \begin{equation}\label{eq:lemC1toC3ImplyC4_1}
        K^A_{t,r}(e\mid y) \geq \sigma t + K^{A}_{t, r}(x \mid y) - K^A_t(x) - O(\log r).
    \end{equation}
    By condition (C3), and the symmetry of information again,
    \begin{align*}
        K^A_{t,r}(x\mid y) &= K^A_{t}(x) + K^A_{r, t}(y\mid x) - K^A_r(y) - O(\log r)\\
        &\geq K^A_{t}(x) + K^{A,x}_{r}(y) - K^A_r(y) - O(\log r)\\
        &\geq K^A_{t}(x) - O(\log r),
    \end{align*}
    and the conclusion follows.
\end{proof}

We are now in a position to prove our classical bounds. 

\begin{thm}\label{thm:modifiedMainTheorem}
Suppose $X$ and $Y$ are analytic sets such that $\dim_H(X), \dim_H(Y)>\sigma$, and $\dim_P(X), \dim_P(Y)\leq D$. Then for a set of $x\in X$ of Hausdorff dimension at least $\sigma$,
\begin{center}
    $\dim_H(\Delta_x(Y))\geq \sigma \left(1 - \frac{\alpha D - \sigma(D+\alpha -\sigma)}{(\sigma+1)(\alpha D - \sigma^2) -\sigma^2(\alpha + D - 2 \sigma)}\right)$
\end{center}
Where $\alpha = \min\{1 + \sigma, D\}$
\end{thm}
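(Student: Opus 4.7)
The plan is to combine Theorem \ref{thm:mainThmEffDim} with the reduction Lemmas \ref{lem:C1-C3} and \ref{lem:C1toC3ImplyC4}, the point-to-set principle, and the standard Frostman and effective-compactness toolkit, following the well-established template for passing from an effective bound to its classical counterpart. All the genuine difficulty sits in the effective theorem; what remains is routine bookkeeping.

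First I would pass to compact subsets $X' \subseteq X$ and $Y' \subseteq Y$ for which the hypotheses $\dim_H > \sigma$ and $\dim_P \leq D$ are preserved; this is possible for analytic sets by Howroyd's theorem. Choose a parameter $s$ with $\sigma < s < \min\{\dim_H(X'), \dim_H(Y'), 1\}$, and use Frostman's lemma to produce Borel probability measures $\mu, \nu$ supported on $X', Y'$ satisfying the $s$-dimensional Frostman condition. A further shrinking of $X', Y'$, if necessary, guarantees $\mu(\ell)\nu(\ell) < 1$ for every line $\ell$; the only obstruction is the degenerate case in which both sets lie on a common line, which reduces to a one-dimensional projection problem and can be dispatched separately.

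Next, fix a single oracle $A$ which simultaneously (i) makes $\nu$ computable, (ii) witnesses the effective compactness of $Y'$, and (iii) satisfies $\Dim^A(z) \leq D$ for every $z \in X' \cup Y'$. Such an $A$ exists by taking the computable join of an oracle for $\nu$, an effective-compactness oracle for the compact set $Y'$, and the packing oracles for $X'$ and $Y'$ furnished by the packing-dimension half of Theorem~\ref{thm:p2s}. Lemma \ref{lem:C1-C3}, applied to $(\mu,\nu)$ relative to $A$, yields a set $X^* \subseteq X'$ with $\dim_H(X^*) \geq \sigma$ such that every $x \in X^*$ admits some $y \in Y'$ satisfying conditions (C1)--(C3) relative to $A$; Lemma \ref{lem:C1toC3ImplyC4} automatically upgrades the same pair to (C4). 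Theorem \ref{thm:mainThmEffDim} therefore applies to this pair and gives
\begin{equation*}
    \dim^{x,A}(\vert x-y\vert) \;\geq\; \sigma\left(1 - \frac{\alpha D - \sigma(D+\alpha-\sigma)}{(\sigma+1)(\alpha D - \sigma^2) - \sigma^2(\alpha + D - 2\sigma)}\right) =: c,
\end{equation*}
where we use $\max\{\Dim^A(x),\Dim^A(y)\} \leq D$ together with the (routine) fact that the displayed bound is monotone non-increasing in the packing parameter.

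Finally, the classical conclusion is obtained by lifting the effective bound via effective compactness. Since $Y'$ is effectively compact relative to $A$ and the map $y \mapsto \vert x-y\vert$ is computable from $x$, the pinned distance set $\Delta_x Y'$ is effectively compact relative to $(A,x)$; by the Hitchcock--Lutz theorem noted in Section \ref{subsec:ptsp}, $(A,x)$ is therefore a Hausdorff oracle for $\Delta_x Y'$, so
\begin{equation*}
    \dim_H(\Delta_x Y) \;\geq\; \dim_H(\Delta_x Y') \;\geq\; \dim^{A,x}(\vert x-y\vert) \;\geq\; c
\end{equation*}
for every $x \in X^*$. Combined with $\dim_H(X^*) \geq \sigma$, this completes the proof. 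The main technical obstacle beyond the effective theorem itself is orchestrating a single oracle $A$ to play all three simultaneous roles, together with verifying monotonicity of the bound in $D$; both are routine.
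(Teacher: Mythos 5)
Your proposal is correct and follows essentially the same route as the paper's own proof: compact Frostman subsets, the degenerate collinear case set aside, a single join oracle $A$ combining an effective-compactness oracle for $Y$, packing oracles for $X$ and $Y$, and a computability oracle for $\nu$, then Lemma~\ref{lem:C1-C3} and Lemma~\ref{lem:C1toC3ImplyC4} to produce a pair $(x,y)$ satisfying (C1)--(C4), Theorem~\ref{thm:mainThmEffDim}, and finally effective compactness of $\Delta_x Y$ relative to $(A,x)$ plus the point-to-set principle. The only cosmetic divergence is that you explicitly flag the monotonicity of the bound in $D$ as a needed step (the paper defers this check to the corollary that follows), and you handle the degenerate collinear case by a remark rather than the paper's one-line ``the conclusion holds trivially''; both are equivalent.
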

\begin{proof}
Through standard results in geometric measure theory (see, e.g., \cite{Mattila99}), there are compact subsets $X_1\subseteq X$ and $Y_1\subseteq Y$ such that $\dim_H(X_1), \dim_H(Y_1) > \sigma$. To simplify notation, we denote these compact subsets $X$ and $Y$, respectively. In addition, there are measures $\mu, \nu \in \mathcal{P}(\R^2)$ whose support is $X$ and $Y$, and which satisfy the $s$-dimensional Frostman condition, for some $s > \sigma$. 

If, for some line $\ell$, $\mu(\ell)\nu(\ell) = 1$, then the conclusion holds trivially. So we assume that, for every line $\ell$, $\mu(\ell)\nu(\ell) <1$.

Let $A$ the join of the following oracles: an oracle relative to which $Y$ is effectively compact, a packing oracle for $X$, a packing oracle for $Y$, and an oracle relative to which $\nu$ is computable. Applying Lemma \ref{lem:C1-C3} gives the existence of some $x$ and $y$ satisfying conditions (C1)-(C3) relative to $A$. By Lemma \ref{lem:C1toC3ImplyC4}, this pair also satisfies condition (C4). By the properties of $A$, $\Dim^A(x), \Dim^A(y)\leq D$, hence we can apply Theorem \ref{thm:mainThmEffDim} and conclude that
\begin{equation*}
\dim^{A,x}(\vert x-y\vert) \geq \sigma \left(1 - \frac{\alpha D - \sigma(D+\alpha -\sigma)}{(\sigma+1)(\alpha D - \sigma^2) -\sigma^2(\alpha + D - 2 \sigma)}\right)
\end{equation*}

We complete the proof using two observations in \cite{Stull22c}
\begin{itemize}
    \item If $Y$ is effectively compact relative to $A$, $A$ is a Hausdorff oracle for $Y$. 
    \item If $Y$ is effectively compact relative to $A$, then $\Delta_x Y$ is effectively compact relative to $(A, x)$. 
\end{itemize}
By our choice of $A$, then, $(A, x)$ is a Hausdorff oracle for $\Delta_x Y$, and so by the point to set principle 
\begin{equation}\label{eq:lowerBoundHausdorffDimension}
\dim_H(\Delta_x Y) \geq \sigma \left(1 - \frac{\alpha D - \sigma(D+\alpha -\sigma)}{(\sigma+1)(\alpha D - \sigma^2) -\sigma^2(\alpha + D - 2 \sigma)}\right).
\end{equation}
\end{proof}

\begin{cor}
    Suppose $X$ and $Y$ are analytic sets such that $\dim_H(X), \dim_H(Y)>\sigma$. Then for a set of $x\in X$ of Hausdorff dimension at least $\sigma$, 
\begin{center}
    $\dim_H(\Delta_x Y) \geq \sigma \left(1 - \frac{2-\sigma}{2+4\sigma -2\sigma^2}\right)$
\end{center}
\end{cor}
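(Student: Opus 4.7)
The plan is to derive this corollary as a direct specialization of Theorem~\ref{thm:modifiedMainTheorem}. Since $X, Y \subseteq \mathbb{R}^2$, we automatically have $\dim_P(X), \dim_P(Y) \leq 2$, so we may take $D = 2$ in Theorem~\ref{thm:modifiedMainTheorem}. Moreover, the bound to be proved is only meaningful when $\sigma \leq 1$ (otherwise one is already in the regime covered by \cite{FieStu23}, and in any case $\sigma < \dim_H(X) \leq 2$ is the only real constraint, with the interesting range being $\sigma \leq 1$). Under $\sigma \leq 1$ and $D = 2$, we have $\alpha = \min\{1+\sigma, D\} = 1+\sigma$.

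With these choices, the proof reduces to an algebraic simplification of the expression
\begin{equation*}
\sigma\left(1 - \frac{\alpha D - \sigma(D+\alpha - \sigma)}{(\sigma+1)(\alpha D - \sigma^2) - \sigma^2(\alpha + D - 2\sigma)}\right).
\end{equation*}
For the numerator of the fraction, substituting $D=2$ and $\alpha=1+\sigma$ gives $\alpha D - \sigma(D+\alpha-\sigma) = 2(1+\sigma) - 3\sigma = 2 - \sigma$. For the denominator, $(\sigma+1)(\alpha D - \sigma^2) = (\sigma+1)(2 + 2\sigma - \sigma^2) = 2 + 4\sigma + \sigma^2 - \sigma^3$ and $\sigma^2(\alpha + D - 2\sigma) = \sigma^2(3-\sigma) = 3\sigma^2 - \sigma^3$, so the denominator simplifies to $2 + 4\sigma - 2\sigma^2$. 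Plugging these back in yields exactly $\sigma\left(1 - \frac{2-\sigma}{2+4\sigma - 2\sigma^2}\right)$, as claimed.

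There is no real obstacle here beyond bookkeeping: the entire analytical content — the projection theorem, the effective dimension construction, and the reduction from thin tubes to the existence of good pairs $(x,y)$ — is already packaged into Theorem~\ref{thm:modifiedMainTheorem}. The only check worth making explicit is that the hypothesis $\dim_P(X), \dim_P(Y) \leq D$ is vacuously satisfied with $D=2$ in the plane, so that no additional regularity assumption on $X$ or $Y$ is needed for the corollary beyond the hypotheses already stated.
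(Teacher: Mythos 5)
Your proof is correct and follows essentially the same approach as the paper: specialize Theorem~\ref{thm:modifiedMainTheorem} to $D=2$ (hence $\alpha=1+\sigma$ for $\sigma\leq 1$) and simplify algebraically. The only cosmetic difference is that the paper justifies $D=2$ by noting the bound is monotone decreasing in $D$, whereas you more directly observe that $D=2$ always satisfies the hypothesis $\dim_P(X),\dim_P(Y)\leq D$ in the plane, which is slightly cleaner.
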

\begin{proof}
    It is easy to verify that the function in the bound (\ref{eq:lowerBoundHausdorffDimension}) is deceasing in $D$ for all $\sigma\in[0, 1]$. Hence, we may assume $D=2$, which implies $\alpha=1+\sigma$, and the conclusion follows.
\end{proof}

\begin{T1}
    Suppose $E\subseteq\R^2$ is analytic set, $d:= \dim_H(E) \leq 1$ and $D:= \dim_P(E)$. Then, 
\begin{center}
    $\sup\limits_{x\in E}\dim_H(\Delta_x(E))\geq d \left(1 - \frac{\alpha D - d(D+\alpha -d)}{(d+1)(\alpha D - d^2) -d^2(\alpha + D - 2 d)}\right)$,
\end{center}
where $\alpha = \min\{1 + d, D\}$. In particular, 
\begin{center}
    $\sup\limits_{x\in E}\dim_H(\Delta_x(E)) \geq d \left(1 - \frac{2-d}{2+4d -2d^2}\right)$
\end{center}
\end{T1}
\begin{proof}
    Since $E$ is analytic, we can find disjoint, analytic subsets $X, Y\subseteq E$ such that $\dim_H(X) = \dim_H(Y) = d$. By Theorem \ref{thm:modifiedMainTheorem}, for every $\sigma < d$, there is a point $x \in E$ such that
    \begin{equation}
\dim_H(\Delta_x Y) \geq \sigma \left(1 - \frac{\alpha D - \sigma(D+\alpha -\sigma)}{(\sigma+1)(\alpha D - \sigma^2) -\sigma^2(\alpha + D - 2 \sigma)}\right),
\end{equation}
    and the conclusion follows.
\end{proof}

\section{Universal pin sets}

Let $\mathcal{C}$ be a class of subsets of $\R^2$. Recall that $X$ is weakly universal for pinned distances for $\mathcal{C}$ if, for every $Y\in \mathcal{C}$
\begin{equation*}
    \sup\limits_{x\in X} \dim_H(\Delta_x (Y)) = \min\{\dim_H(Y), 1\}.
\end{equation*}
and universal for pinned distances for $\mathcal{C}$ if for every $Y\in\mathcal{C}$ there exists some $x\in X$ such that
\begin{equation*}
    \dim_H(\Delta_x (Y)) = \min\{\dim_H(Y), 1\}.
\end{equation*}
In \cite{FieStu23}, the authors established the following result. 
\begin{thm}\label{thm:priorUniversalSets}
Let $X\subseteq\mathbb{R}^2$ be such that $1<d_x<\dim_H(X)=\dim_P(X)$ and let $Y\subseteq \mathbb{R}^2$ be analytic and satisfy $1<d_y<\dim_H(Y)$. Then there is some $F\subseteq X$ such that, 
\begin{center}
    $\dim_H(\Delta_x Y)=1$
\end{center}
\noindent for all $x\in F$. Moreover, $\dim_H(X\setminus F)<\dim_H(X)$. 
\end{thm}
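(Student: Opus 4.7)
The plan is to recast the statement in effective terms via the point-to-set principle and reduce to the super-critical ($\sigma>1$) analogue of the main effective distance theorem (Theorem~\ref{thm:mainThmEffDim}), as established in Stull~\cite{Stull22c}. First, pass to compact subsets of $X$ and $Y$ of Hausdorff dimension strictly greater than $d_x$ and $d_y$ respectively, and equip them with Frostman measures $\mu,\nu$ of exponents $s_x, s_y >1$ (using inner regularity and standard measure-theoretic facts). Fix a single oracle $A$ that is simultaneously (i) a Hausdorff oracle for $X$, (ii) a packing oracle for $X$, (iii) a Hausdorff oracle for $Y$ (via effective compactness of $Y$), and (iv) computes $\nu$. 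Because $\dim_H(X)=\dim_P(X)$, a standard point-to-set argument yields a subset $X_0\subseteq X$ of full Hausdorff dimension on which $\dim^A(x)=\Dim^A(x)$. Take $F:=\{x\in X_0\,:\,\dim^A(x)\geq d_x\}$; then $\dim_H(X\setminus F)\leq d_x<\dim_H(X)$ by another application of the point-to-set principle.

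For each fixed $x\in F$, apply the thin-tubes bound of Orponen--Shmerkin--Wang (Corollary~\ref{cor:OrpShmWan}) to $(\mu,\nu)$ with exponent slightly less than $\min\{s_x,s_y\}>1$, mirroring Lemma~\ref{lem:C1-C3}. This yields a $y\in Y$ whose radial direction $e=(y-x)/|x-y|$ satisfies $K_r^{A,x}(e)\geq \sigma r - O(\log r)$ for some $\sigma>1$, together with the optimality condition $K_r^{A,x}(y)\geq K_r^A(y)-O(\log r)$ on a $\nu$-positive set. Lemma~\ref{lem:C1toC3ImplyC4} then supplies (C4), so the analogues of (C1)--(C4) hold with $\sigma>1$. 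Invoking the super-critical effective distance theorem of \cite{Stull22c} (the $\sigma>1$ counterpart of Theorem~\ref{thm:mainThmEffDim}) gives $\dim^{A,x}(|x-y|)=1$. Effective compactness of $Y$ relative to $A$ implies $(A,x)$ is a Hausdorff oracle for $\Delta_x(Y)$, so the point-to-set principle upgrades this to the classical bound $\dim_H(\Delta_x(Y))\geq 1$; the reverse inequality is automatic since $\Delta_x(Y)\subseteq\R$.

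The main obstacle is the coordinated construction of the oracle $A$: a single oracle must simultaneously serve as a Hausdorff and packing witness for $X$, handle effective compactness of $Y$, and compute $\nu$, all while preserving (or only harmlessly affecting) the effective dimensions of the points ultimately selected. This is precisely where the hypothesis $\dim_H(X)=\dim_P(X)$ is indispensable: without it, one cannot guarantee that $\Dim^A(x)$ is bounded above by $\dim_H(X)$ on a set of full Hausdorff dimension, and the effective distance machinery requires two-sided control on the complexity function of $x$. A secondary technical point is that the direction $e$ is produced with complexity measured relative to $(A,x)$ rather than $A$ alone; this is handled by the same relativization formalism used throughout Section~3, specifically the absorption results in Lemma~\ref{lem:oracles} which keep the join $(A,x)$ well-behaved.
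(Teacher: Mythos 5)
This theorem is quoted in the present paper as a result from the authors' earlier work \cite{FieStu23}; the present paper gives no proof of it, so there is no in-paper proof to compare against. Evaluating your proposal on its own merits, there is a genuine gap.

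The gap is in the step where you claim ``For each fixed $x\in F$, apply the thin-tubes bound of Orponen--Shmerkin--Wang\ldots This yields a $y\in Y$\ldots''. The thin-tubes property of $(\mu,\nu)$ is a statement about a Borel set $G\subseteq X\times Y$ with $(\mu\times\nu)(G)\geq c$; the non-concentration bound on tubes through $x$ is only non-vacuous for those $x$ with $\nu(G\vert_x)>0$, and that is a $\mu$-positive-measure (at best, $\mu$-full-measure) set of $x$. It need not contain your $F$, nor does $\mu$-a.e.\ (or $\mu$-positive) control imply that the complementary set of bad pins has Hausdorff dimension strictly below $\dim_H(X)$ --- a $\mu$-null subset of $X$ can still have full Hausdorff dimension. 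Your $F$ is constructed purely from effective regularity of $x$ (via the point-to-set principle), which indeed gives $\dim_H(X\setminus F)\leq d_x<\dim_H(X)$, but it is not coordinated with the thin-tubes set $G$; nothing in the argument guarantees that each $x\in F$ admits a $y\in Y$ with $K_r^{A,x}(e)\geq\sigma r-O(\log r)$ for some $\sigma>1$. This is precisely where the hypotheses $\dim_H(X)>1$ and $\dim_H(Y)>1$ are doing essential work: one needs a radial projection theorem in which the \emph{exceptional set of pins has bounded Hausdorff dimension} (e.g., of the form ``$\dim_H(\pi_x(Y))<\min\{\dim_H(Y),1\}$ only for $x$ in a set of dimension at most $1$''), rather than the thin-tubes formulation, which controls only measure. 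Since $\dim_H(X)>1$, such a dimensional exceptional-set bound would immediately give $\dim_H(X\setminus F)\leq 1<\dim_H(X)$ once intersected with your regularity set, and that is what closes the argument. As written, your proof only yields the weaker conclusion that \emph{some} $x\in X$ (or a $\mu$-positive-measure set of $x$) has $\dim_H(\Delta_x Y)=1$, not that this holds for all $x$ outside a set of strictly smaller dimension.

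A secondary issue: Corollary~\ref{cor:OrpShmWan} as recorded in this paper is stated for $s\in(0,1]$, so it cannot be invoked ``with exponent slightly less than $\min\{s_x,s_y\}>1$''; you would need to cite a different statement from \cite{OrpShmWan22} or another source handling the supercritical range. That by itself is a citation issue, but it compounds the main gap above, since the supercritical radial projection results are exactly the ones that give the dimensional exceptional-set bound you actually need.
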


In the language of universal sets, this theorem immediately implies that \emph{every} weakly regular set with Hausdorff dimension greater than one is universal for the class of analytic sets in the plane having dimension more than 1. The regularity of the pin set was essential, as it ensured that for typical pins, there was a nearly optimal projection bound. We will prove two results concerning the case where $Y$ has dimension no more than 1; combined with the above these will give the existence of (weakly) universal sets for classes with no dimension restriction. 

First, we will consider the case that $X$ is weakly regular. In this case, we can guarantee that typical points are arbitrarily close to (effectively) regular, but not necessarily regular (in the sense of having the same effective dimension relative to an appropriate oracle). As Theorem \ref{thm:priorUniversalSets} indicates, if $X$ and $Y$ both have dimension more than 1, this still is enough for the full strength conclusion of universality. However, if $Y$ has dimension at most $1$, we can only achieve weak universality. This is the content of the first subsection. 

In the second subsection, we assume additional regularity in $X$, namely that it is AD regular. Full Ahlfors-David regularity isn't necessary; it is possible to weaken the assumption slightly. However, we will require AD regularity to simplify the exposition. In the AD regular case, we are guaranteed the existence of points that are extremely regular, with their complexity functions only deviating from $\dim_H(x) r$ by a logarithmic term. This enables us to show an optimal effective theorem. We only consider the case that $\dim_H(X)>1$ so we can employ a somewhat different reduction, though one that still depends on \cite{OrpShmWan22}.

\subsection{Weakly regular universal sets}

We begin with a projection bound for the weakly regular case. 

\begin{prop}\label{prop:semiRegularProjection}

Let  $A\subseteq\N$, $x \in \R^2$, $e \in \mathcal{S}^1$, $\sigma \in \Q\cap (0,1)$, $\ve\in \Q^+$, $C, C^\prime\in\N$, and $t, r \in \N$. Suppose that $r$ is sufficiently large, and that the following hold.
\begin{enumerate}
\item[\textup{(P1)}] $\sigma < \dim^A(x)=:d_x$.
\item[\textup{(P2)}] $ t \geq  \frac{r}{C} $.
\item[\textup{(P3)}] $K^{x, A}_s(e) \geq \sigma s - C^\prime\log s$, for all $s \leq t$. 
\end{enumerate}
Then there exists some $\ve_x$ depending only on $d_x$ and $C$ such that $\Dim^A(x)-\dim^A(x)<\ve_x$ implies that 
\begin{equation*}
K^A_r(x \,|\, p_e x, e) \leq K^A_r(x) - \sigma r + \ve r.
\end{equation*}
\end{prop}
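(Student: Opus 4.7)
The plan is to adapt the admissible partition framework of Section 3 to exploit the near-regularity of $x$, which limits oscillations of the complexity function $f(s) := K^A_s(x)$ and, crucially, prevents long $\sigma$-teal intervals from forming. Specifically, choose $\ve_x$ small enough (proportional to $(d_x - \sigma)/C$, with $\sigma$ implicit) so that the assumption $\Dim^A(x) - \dim^A(x) < \ve_x$ forces, for all sufficiently large $r$ and all $s \in [s_0, r]$,
\[
(d_x - \ve_x) s - O(1) \leq f(s) \leq (d_x + 2\ve_x) s + O(1),
\]
where $s_0$ depends on $x$, $A$, and $\ve_x$ but not on $r$.

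The first step is to construct an $(10C, r, t, \sigma)$-admissible partition $\mathcal{P}$ of $[1, r]$ by starting from the red--$\sigma$-green--blue partition of Lemma \ref{lem:redGreenBluePartitionProjections}, then refining each red interval into $\sigma$-yellow sub-intervals of length at most $t$ (possible since red intervals have slope exceeding $1$) and retaining each blue interval as $\sigma$-teal. The $\sigma$-green intervals, being both $\sigma$-yellow and $\sigma$-teal, will be classified as teal in the analysis below. Ensuring that this refinement yields at most $10C$ intervals uses property (4) of Lemma \ref{lem:redGreenBluePartitionProjections} (the green parts of each RGB sequence have total length at least $t \geq r/C$), together with the near-regularity bound that any red interval $[a, b]$ has $b - a \leq 3\ve_x b/(1 - d_x)$.

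The key estimate is that the total length $T$ of $\sigma$-teal intervals in $\mathcal{P}$ is $O(C \ve_x r/(d_x - \sigma))$. Indeed, any $\sigma$-teal interval $[a, b]$ satisfies $f(b) - f(a) \leq \sigma(b - a)$, while near-regularity gives $f(b) - f(a) \geq d_x(b - a) - 3\ve_x b - O(1)$; combining yields $b - a \leq 3\ve_x b/(d_x - \sigma) + O(1)$. Summing over at most $10C$ such intervals, each with right endpoint at most $r$, gives the stated bound on $T$, and choosing $\ve_x$ small enough (e.g., $\ve_x := \ve(d_x - \sigma)/(100 C \sigma)$) makes $\sigma T \leq \ve r / 2$.

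The proof concludes by applying Lemma \ref{lem:partitionProjectionSum} and the symmetry of information. Since $\sum_i K^A_{a_{i+1}, a_i}(x \mid x) = K^A_r(x) + O(\log r)$ with all summands non-negative,
\[
K^A_r(x \mid p_e x, e) \leq \tfrac{\ve}{2} r + \sum_{i \in \textup{Bad}} \left(K^A_{a_{i+1}, a_i}(x \mid x) - \sigma L_i\right) \leq \tfrac{\ve}{2} r + K^A_r(x) - \sigma(r - T),
\]
and then $\sigma T \leq \ve r / 2$ gives the desired inequality. The main obstacle is the interval-counting step: a priori the partition of Lemma \ref{lem:redGreenBluePartitionProjections} may contain many short $\sigma$-green intervals, so establishing that they can be grouped into at most $O(C)$ pieces while preserving the $\sigma$-teal property requires a careful greedy argument driven by the near-linearity of $f$.
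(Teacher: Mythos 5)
The route you take—constructing a $(10C, r, t, \sigma)$-admissible partition and bounding the total $\sigma$-teal length $T$ so that $\sigma T \leq \ve r/2$—is genuinely different from the paper's, but it does not close, and the gap you flag at the end is in fact the crux of the whole argument, not a technicality that can be deferred. Your per-interval estimate is fine: if $[a,b]$ is $\sigma$-teal and lies past a constant threshold, then near-regularity forces $b-a \lesssim \ve_x b/(d_x-\sigma)$. But the bound $T \lesssim C\ve_x r/(d_x-\sigma)$ only follows if you already know there are at most $O(C)$ teal intervals in the partition, and nothing in the red--$\sigma$-green--blue decomposition of Lemma~\ref{lem:redGreenBluePartitionProjections} controls the number of $\sigma$-green or blue pieces: the complexity function can oscillate rapidly between integer slopes, producing many short teal blocks separated by short red blocks. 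You cannot merge two teal blocks separated by a red block into a single teal interval, so there is no generic greedy regrouping that preserves tealness while cutting the count to $O(C)$. Thus the existence of the admissible partition you need is precisely what has to be proved, and the rest of your argument is conditional on it.

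The paper resolves this by attacking the blue intervals rather than bounding total teal mass. From the red--green--blue partition it argues that if a red--$\sigma$-green--blue sequence existed, its green block would have total length at least $t \geq r/C$; but near-regularity forces any $\sigma$-green block $[s,s+u]$ with $s \geq \sqrt r$ to satisfy $(d_x-\sigma-\ve')u \leq 2\ve' C t$, so $u<t$ once $\ve_x$ is small relative to $(d_x-\sigma)/C$. Hence no RGB sequences, and (combined with property (3) of Lemma~\ref{lem:redGreenBluePartitionProjections} and the observation that a red must occur below precision $\ve r/2$) no blue intervals beyond a small initial segment. Once blue is gone, every sub-interval of $[\ve r/2, r]$ of length $\leq t$ is automatically $\sigma$-yellow, so a trivial equal-length cover gives at most $2C$ yellow pieces—the interval count is free. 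Moreover, because any remaining teal pieces are $\sigma$-green, their complexity increments equal $\sigma$ times their length, and the symmetry-of-information sum telescopes to $K^A_r(x) - \sigma r$ with no $\sigma T$ loss at all. In contrast, your accounting uses only $\sum_{\text{Teal}} K^A_{a_{i+1},a_i}(x\mid x) \geq 0$, which silently presumes the worst (blue) case for all teal pieces; this is exactly why you are forced to make $T$ itself tiny. So the obstacle you name is real and is not patchable by a ``careful greedy argument'' alone—you would have to first rule out blue intervals via near-regularity, at which point you have essentially reproduced the paper's argument and the separate bound on $T$ becomes unnecessary.
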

\begin{proof}
Let $C$ and $\varepsilon$ be given. Assume $r$ is large enough that if $\sqrt r <s$
\begin{equation*}
  d_x s - \ve^\prime s \leq K_s^A(x) + d_x s - \ve^\prime s   
\end{equation*}
where $\ve^\prime>\ve_x$ and may further depend on $C$, $d_x$, and $\sigma$. Let $t> \frac{r}{C}$ be given and let $\mathcal{P}$ be the partition of $[1, r]$ into red, $\sigma$-green, and blue intervals of Lemma \ref{lem:redGreenBluePartitionProjections}. We will show that after a small precision, all the intervals in this partition are green or red, which will allow us to apply Lemma \ref{lem:partitionProjectionSum} to a partition that almost entirely consists of yellow intervals. 

By the properties of $\mathcal{P}$, and the fact that $d_x > \sigma$, this follows if there are no red $\sigma$-green blue sequences in $\mathcal{P}$ after some small precision. 

Any block of $\sigma$-green intervals in $\mathcal{P}$ always has length at least $t$, and the growth rate is exactly $\sigma$. Suppose $s\in[\sqrt{r}, r]$ and $[s, s+u]$ is a green interval or union of adjacent green intervals. Then 
\begin{equation*}
K_s^A(x) \leq d_x s + \ve^\prime s
\end{equation*}
while
\begin{equation*}
K_{s+u}^A(x) \geq d_x (s+u) - \ve^\prime (s+u)
\end{equation*}
By our assumption, $K$ grows by $\sigma u$ on $[s, s+u]$, hence
\begin{align*}
    \sigma u&\geq (d_x-\ve^\prime)(s+u) - (d_x+\ve^\prime) s\\
    &= d_x u - 2\ve^\prime s - \ve^\prime u\\
    &\geq d_x u - 2\ve^\prime r - \ve^\prime u.
\end{align*}
Using the fact that $t\geq\frac{r}{C}$, this implies
\begin{equation*}
    (d_x - \sigma - \ve^\prime) u \leq 2 \ve^\prime C t
\end{equation*}
So $u<t$ if
\begin{equation*}
\dfrac{2 \ve^\prime C}{d_x - \sigma - \ve^\prime} < 1 . 
\end{equation*}
We are restricted to choosing $\ve^\prime>\ve_x$, but $\ve_x$ can be chosen depending on $\sigma$, $d_x$, and $C$, so we may assume $\ve^\prime$ is small enough that, for sufficiently large $r$, $u<t$ and consequently there are no red-green-blue sequences in $\mathcal{P}$. 

The argument that, for $r$ sufficiently large, $[\sqrt{r}, \frac{\ve}{2} r]$ intersects a red interval proceeds in the same way as the proof of Proposition 40 in \cite{FieStu23}. Hence, we omit it and observe the consequence that, for sufficiently large $r$, $[\frac{ve}{2}r, r]$ is covered entirely by red and green intervals. Using a simple greedy strategy, we can cover $[\frac{ve}{2}r, r]$ by no more than $2C$ yellow intervals. By Lemma \ref{lem:partitionProjectionSum} with respect to $\frac{\ve}{2}$, 

\begin{align*}
    K^A_{r}(x \mid p_e x, e) &\leq \frac{\ve r}{2} + \sum\limits_{i\in \textbf{Bad}} \min\{K^A_{a_{i+1}, a_{i}}(x \mid x) - \sigma (a_{i+1} - a_i), a_{i+1} - a_i\}\\
    &\leq \min\{K_r^A(x) - \sigma r, r\} + \ve r 
\end{align*}
\end{proof}

\begin{thm}\label{thm:mainThmEffDim2}
Suppose that $x, y\in\R^2$, $e = \frac{y-x}{\vert y-x\vert}$, $\sigma \in \Q \cap (0,1)$, $\sigma^\prime > \sigma$, $\ve > 0$, and $A\subseteq\N$  satisfy the following.
\begin{itemize}
\item[\textup{(C1)}] $\dim^A(x)>\sigma^\prime, \dim^A(y) > \sigma$
\item[\textup{(C2)}] $K^{A,x}_r(e) > \sigma r - O(\log r)$ for all $r$.
\item[\textup{(C3)}] $K^{A, x}_r(y) \geq K^{A}_r(y) - O(\log r)$ for all sufficiently large $r$. 
\item[\textup{(C4)}] $K^{A}_{t,r}(e\mid y) > \sigma t - O(\log r)$ for all sufficiently large $r$ and $t \leq r$.
\end{itemize}
Then there is a constant $\delta(\ve, \sigma^\prime) > 0$, depending only on $\ve$ and $s$ such that, if  $\Dim^A(x) - \dim^A(x) < \delta(\ve, \sigma^\prime)$,
\begin{equation*}
\dim^{x,A}(\vert x-y\vert) \geq  \sigma - \frac{10\ve}{\sigma}.
\end{equation*}
\end{thm}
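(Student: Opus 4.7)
The plan is to modify the proof of Theorem \ref{thm:mainThmEffDim} by replacing the application of Theorem \ref{thm:mainProjection} in Lemma \ref{lem:goodtealgrowth} with the semi-regular projection bound (Proposition \ref{prop:semiRegularProjection}), which becomes applicable under the additional hypothesis $\Dim^A(x) - \dim^A(x) < \delta(\ve, \sigma')$. The crucial effect is that the error term $\frac{(\alpha - \sigma)\sigma}{\alpha}(r - 2s)$ from Theorem \ref{thm:mainProjection}, which propagates through Lemma \ref{lem:goodtealgrowth} and is ultimately the source of the suboptimal teal slope in Lemma \ref{lem:slopeBoundTeal}, collapses to a negligible $\ve(r-s)$.

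The first step is to prove an improved version of Lemma \ref{lem:goodtealgrowth}: under the near-regularity hypothesis, for any $\sigma$-teal interval $[r_{i+1}, r_i]$ of the partition,
\[
K^{A,x}_{r_i, r_i, r_{i+1}}(y \mid |x-y|, y) \leq \ve r_i.
\]
The proof follows the same outline as the original: lower-bound $K^A_{r_i}(z)$ for a candidate $z$ on the circle $|x-z| = |x-y|$ via Lemma \ref{lem:lowerBoundOtherPointDistance}, then estimate $K^A_{r_i-s}(x \mid p_{e'} x, e')$. The only change is that the estimate now comes from Proposition \ref{prop:semiRegularProjection}, giving $K^A_{r_i - s}(x \mid p_{e'} x, e') \leq K^A_{r_i-s}(x) - \sigma(r_i-s) + \ve(r_i-s)$. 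Together with the teal property $K^A_s(y) \geq K^A_{r_i}(y) - \sigma(r_i - s)$, this yields the clean lower bound $K^{A,G}_{r_i}(z) \geq K^A_{r_i}(y) - O(\ve) r_i \geq \eta r_i + \ve r_i$ required by Lemma \ref{lem:distanceEnumeration}.

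With the improved lemma in hand, Lemmas \ref{lem:leftEndpointNotTooSmall} and \ref{lem:slopeBoundTeal}, whose role was to control the worst-case teal slope under the weaker projection bound, are no longer needed. One can therefore use a simplified partition of $[1,r]$ in which every teal interval is chosen with $r_{i+1} = t_i'$, forcing $K^A(y)$ to grow at exactly rate $\sigma$ on each teal interval. On yellow intervals, Lemma \ref{lem:boundComplexityYellowNewPartition} still gives $K^{A,x}_{r_i, r_{i+1}}(|x-y| \mid |x-y|) \geq \sigma(r_i - r_{i+1}) - \ve r_i$; on teal intervals, the improved Lemma \ref{lem:goodtealgrowth} combined with the exact-$\sigma$ growth of $K^A(y)$ yields the same. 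Summing across the partition via symmetry of information produces $K^{A,x}_r(|x-y|) \geq \sigma r - O(\ve/\sigma) r$, and dividing by $r$ and taking the $\liminf$ gives $\dim^{x,A}(|x-y|) \geq \sigma - 10\ve/\sigma$.

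The main obstacle is the interlocked choice of $\delta(\ve, \sigma')$ and partition geometry. Proposition \ref{prop:semiRegularProjection} requires a regularity gap $\ve_x$ that depends on $\dim^A(x)$ and the constant $C$ in the hypothesis $t \geq r/C$; inside the improved Lemma \ref{lem:goodtealgrowth}, this $C$ is determined by the doubling ratio of the partition, so one must verify that the modified partition has bounded doubling and that $\delta$ can be chosen uniformly small enough across all relevant scales. A secondary technical matter is verifying the directional complexity hypothesis of Proposition \ref{prop:semiRegularProjection} for the auxiliary direction $e' = (y-z)/|y-z|$ rather than the original direction $e$; this is handled as in the original proof of Lemma \ref{lem:goodtealgrowth}, since $z$ lying close to $y$ on the circle forces $e'$ to be near-orthogonal to $e$, and $e \mapsto e^\perp$ is Lipschitz-bi-computable.
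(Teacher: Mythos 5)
The paper's proof takes a different and substantially simpler route than yours: it does not re-run the partition machinery of Theorem~\ref{thm:mainThmEffDim} at all. Instead, it fixes $C = 1/(10\ve)$, sets $t = r/C$, takes $\eta \leq \sigma - 3\ve$, and applies Lemma~\ref{lem:distanceEnumeration} once at this single scale, bounding $K^{A,G}_r(z)$ for candidates $z$ on the circle via Lemma~\ref{lem:intersectionLemmaProjections} (when $s \geq r/2$) or via Lemma~\ref{lem:lowerBoundOtherPointDistance} together with Proposition~\ref{prop:semiRegularProjection} (when $s < r/2$). Crucially, every candidate $z$ then satisfies $s \geq t = r/C$, which forces $s \geq (r-s)/C$, so the hypothesis (P2) of Proposition~\ref{prop:semiRegularProjection} (with its ``$t$'' set to $s$ and its ``$r$'' set to $r-s$) is automatic, and the constant $C$ that determines the regularity threshold $\ve_x = \delta(\ve,\sigma')$ is fixed \emph{before} any geometry of $K^A(y)$ is inspected. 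Lemmas~\ref{lem:leftEndpointNotTooSmall}, \ref{lem:slopeBoundTeal}, and \ref{lem:goodtealgrowth} are never invoked.

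Your plan of upgrading the teal-growth lemma via Proposition~\ref{prop:semiRegularProjection} is a natural idea, but the specific partition you propose does not support it. Choosing $r_{i+1} = t_i'$ on teal intervals discards precisely the purpose of including $t_i$ in $r_{i+1} = \max\{t_i, t_i'\}$: the definition~\eqref{eq:choiceOfRi+1} was engineered so that Lemma~\ref{lem:leftEndpointNotTooSmall} holds, and that lower bound on $r_{i+1}/r_i$ is exactly what certified $s \geq (r_i - s)/C$ inside the teal-growth argument, i.e.\ what made hypothesis (P2) of the projection input checkable. The quantity $t_i'$ alone is constrained only by a chord-slope condition on $K^A(y)$; since (C1)--(C4) impose no regularity on $y$ beyond $\dim^A(y) > \sigma$, the ratio $t_i'/r_i$ can be small at infinitely many scales (e.g.\ when $K^A(y)$ has long plateaus just below $r_i$), in which case the required $\ve_x(C)$ would have to shrink to $0$. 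You flag this as ``the main obstacle'' but leave it unresolved, and it is not a deferred verification---it is a genuine failure of the modified partition geometry, with no evident repair that preserves both exact-$\sigma$ teal growth and bounded scale ratio. The paper's single-scale argument is precisely what circumvents this tension, and it is where your proposal and the paper part ways.
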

\begin{proof}
 Let $C = \frac{1}{10\ve}$. Let $\delta(\ve, \sigma^\prime)$ be the real $\ve_x$ in the statement of Proposition \ref{prop:semiRegularProjection}, and suppose that $\Dim^A(x) - \dim^A(x) < \delta(\ve, \sigma^\prime)$.
 
Let $\eta$ be a rational such that $\eta \leq \sigma - 3\ve$. Let $r$ be sufficiently large and let $t = \frac{r}{C}$. Let $G = D^A(r,y, \eta)$ be the oracle of Lemma \ref{lem:oracles}. Then, relative to the oracle $(A, G)$, the first condition of Lemma \ref{lem:distanceEnumeration} holds. So, we need to show that for $z\in B_{2^{-t}}(y)$ such that $\vert x - y\vert = \vert x - z\vert$,
    \begin{equation*}
    K^{A, G}_r(z)\geq \eta r + \min \{\ve r, \sigma(r - s) - \ve r\}
    \end{equation*}
    where $s=-\log \vert y - z\vert.$

If $s\geq \frac{r}{2} - \log r$, then $\vert p_e y - p_e z \vert \leq {r}^2 2^{-r} $. Lemma \ref{lem:intersectionLemmaProjections} shows that
\begin{equation*}
    K^{A, G}_{r}(z)\geq K^{A, G}_{s}(y) + \sigma(r - s) - \frac{\ve}{2} r - O(\log r). 
\end{equation*}
Using the properties of $G$ establishes the desired bound in this case, so we assume $s\leq \frac{r}{2} - \log r$. By Lemma \ref{lem:lowerBoundOtherPointDistance} we see that
\begin{equation*}
    K^{A, G}_r(z) \geq K^{A, G}_s(y) + K^{A, G}_{r-s, r}(x\mid y) - K^{A, G}_{r-s}(x\mid p_{e^\prime} x, e^\prime) - O(\log r)
\end{equation*}
It is straightforward to verify that the conditions of Proposition \ref{prop:semiRegularProjection} are satisfied with respect to $x$, $e^\prime, \ve, C, t=s,$ and $r=r-s$. Applying Proposition \ref{prop:semiRegularProjection} yields
\begin{align*}
K^{A, G}_{r}(z) &\geq K^{A, G}_s(y) + K^{A, G}_{r-s, r}(x\mid y) - K^{A, G}_{r-s}(x\mid p_{e^\prime} x, e^\prime) - O(\log r)\\
&\geq K^{A}_s(y) + K^{A}_{r-s}(x) - K^A_{r-s}(x\mid p_{e^\prime} x, e^\prime) - O(\log r)\\
&\geq K^{A}_s(y) + K^{A}_{r-s}(x)  - \left(K^A_{r-s}(x) - \sigma (r-s) + \ve r \right)- \ve r-O(\log r)\\
&\geq K^{A}_s(y)  + \sigma (r-s) - 2\ve r-O(\log r)\\
&> \eta r - 2\ve r - O(\log r)
\end{align*}
establishing the second condition of Lemma \ref{lem:distanceEnumeration}. Therefore,
\begin{equation}
    K^{A,x}_{r, t}(\vert x - y\vert \mid y) \geq \eta r - \frac{3\ve}{\sigma}r - O(\log r).
\end{equation}
Hence, by definition of $\eta$,
\begin{equation}
    \dim^{A,x}(\vert x - y\vert) \geq \sigma - \frac{10\ve}{\sigma}.
\end{equation}

\end{proof}

\begin{thm}
Let $X\subseteq\R^2$ be Borel and weakly regular, i.e., $\dim_H(E) = \dim_P(E)$. Then $X$ is universal for pinned distances for the class of Borel sets $Y\subseteq \R^2$ satisfying $\dim_H(Y) < \dim_H(X)$. 
\end{thm}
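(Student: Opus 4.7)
The plan is to combine Theorem \ref{thm:mainThmEffDim2} with the thin-tubes machinery used in Lemma \ref{lem:C1-C3}, exploiting weak regularity of $X$ to supply a \emph{nearly regular} pin point $x$. Since $\Delta_x$ is $1$-Lipschitz into $\R$ we always have $\sup_{x \in X} \dim_H(\Delta_x Y) \leq \min\{\dim_H(Y), 1\}$, so it suffices to show that for every rational $\sigma < \min\{\dim_H(Y), 1\}$, $\sup_{x \in X} \dim_H(\Delta_x Y) \geq \sigma$.

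First, following the standard reductions used in the proof of Theorem \ref{thm:modifiedMainTheorem}, fix rational $\sigma^\prime$ with $\sigma < \sigma^\prime < \min\{\dim_H(Y), \dim_H(X), 1\}$, extract compact subsets $X_1 \subseteq X$, $Y_1 \subseteq Y$ of Hausdorff dimension exceeding $\sigma^\prime$, and let $\mu, \nu$ be Frostman measures on $X_1, Y_1$ satisfying a $\sigma^\prime$-dimensional Frostman condition; the case $\mu(\ell)\nu(\ell) = 1$ for some line is trivial and handled as before. Let $A$ be the join of a Hausdorff oracle for $X$, a packing oracle for $X$, a packing oracle for $Y$, an oracle rendering $Y$ effectively compact, and an oracle relative to which $\nu$ is computable. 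Fix $\ve > 0$ and let $\delta = \delta(\ve, \sigma^\prime) > 0$ be the constant furnished by Theorem \ref{thm:mainThmEffDim2}. Because $X$ is weakly regular, with $d := \dim_H(X) = \dim_P(X)$, the oracle $A$ forces $\Dim^A(x) \leq d$ for every $x \in X$, while the point-to-set principle guarantees that $\{x \in X_1 : \dim^A(x) > d - \delta/2\}$ has positive $\mu$-measure. In particular, the set
\[X_2 := \{x \in X_1 : \dim^A(x) > \sigma^\prime \text{ and } \Dim^A(x) - \dim^A(x) < \delta\}\]
has positive $\mu$-measure.

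Next, apply Corollary \ref{cor:OrpShmWan} to produce a good set $G \subseteq X_1 \times Y_1$ witnessing $\sigma^\prime$-thin tubes for $(\mu, \nu)$, with $(\mu\times\nu)(G) \geq c > 0$. By Fubini, a positive $\mu$-measure set of $x \in X_2$ satisfies $\nu(G|_x) > 0$. Running the argument of Lemma \ref{lem:C1-C3} verbatim (using the thin-tubes estimate on $G|_x$ and the point-to-set principle) furnishes such an $x \in X_2$ together with a $y \in Y_1$ for which the direction $e = (y-x)/|y-x|$ satisfies $\dim^{A,x}(e) > \sigma$ and conditions (C1)--(C3); Lemma \ref{lem:C1toC3ImplyC4} then gives (C4). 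By construction $\Dim^A(x) - \dim^A(x) < \delta(\ve, \sigma^\prime)$, so Theorem \ref{thm:mainThmEffDim2} applies and yields $\dim^{A,x}(|x-y|) \geq \sigma - 10\ve/\sigma$. Since $Y$ is effectively compact relative to $A$, $\Delta_x Y$ is effectively compact relative to $(A, x)$, so $(A, x)$ is a Hausdorff oracle for $\Delta_x Y$ and the point-to-set principle yields $\dim_H(\Delta_x Y) \geq \sigma - 10\ve/\sigma$. Letting $\ve \to 0$ and then $\sigma \to \min\{\dim_H(Y), 1\}$ finishes the argument.

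The principal obstacle is to arrange the three requirements on $x$ simultaneously: (i) membership in the $\mu$-positive thin-tubes good set so that a $\sigma^\prime$-complex direction to some $y \in Y$ exists, (ii) $\dim^A(x) > \sigma^\prime$, and (iii) near-regularity $\Dim^A(x) - \dim^A(x) < \delta(\ve, \sigma^\prime)$. Weak regularity of $X$ is precisely what makes (iii) hold on a set of positive $\mu$-measure, after which (i)--(ii) are secured by Fubini inside that set. A quantifier-order check is also needed: $\delta(\ve, \sigma^\prime)$ must be specified before choosing $X_2$, which is legitimate because it depends only on $\ve$ and $\sigma^\prime$, not on $A$ or on the eventual $x$.
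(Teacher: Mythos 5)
Your strategy follows the paper's closely, but there is a genuine gap in the step that produces a near-regular pin $x$ of positive $\mu$-measure.

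You extract $X_1 \subseteq X$ of Hausdorff dimension merely exceeding $\sigma'$ and equip it with a Frostman measure $\mu$ of exponent $\sigma'$, and then assert that ``the point-to-set principle guarantees that $\{x \in X_1 : \dim^A(x) > d - \delta/2\}$ has positive $\mu$-measure.'' This does not follow. The point-to-set principle speaks to the existence of individual points of high effective dimension in $X$, not to the measure of such points under an arbitrary Frostman measure on an arbitrary compact subset. Since $\mu$ only satisfies a $\sigma'$-dimensional Frostman condition, the standard argument gives only $\dim^A(x) \geq \sigma'$ for $\mu$-a.e.\ $x$; $\mu$ can perfectly well be concentrated on points of effective dimension close to $\sigma'$, which may be far below $d - \delta/2$. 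Indeed, nothing prevents $\dim_H(X_1)$ itself from being close to $\sigma'$, so that after adjoining a Hausdorff oracle for $X_1$ the set $\{x \in X_1 : \dim^A(x) > d - \delta/2\}$ could even be empty. Consequently $X_2$ may have $\mu$-measure zero, and the Fubini step collapses.

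The fix is to push the near-regularity into the choice of $X_1$ and $\mu$ \emph{from the start}, which is exactly what the paper does: choose $X_1\subseteq X$ compact with $\dim_H(X_1) > \max\{\dim_P(X) - \delta, \sigma'\}$ (this is possible because $\dim_H(X) = \dim_P(X)$), and choose $\mu$ to satisfy an $s_X$-dimensional Frostman condition with $s_X > \max\{\dim_P(X) - \delta, \sigma'\}$. Then the Frostman condition, rather than the point-to-set principle, yields $\dim^A(x) \geq s_X > \dim_P(X) - \delta$ for $\mu$-a.e.\ $x$, while the packing oracle gives $\Dim^A(x) \leq \dim_P(X)$, so $\Dim^A(x) - \dim^A(x) < \delta$ for $\mu$-a.e.\ $x$; the near-regularity set is then $\mu$-full, and the rest of your argument goes through. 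Note also that the quantifier order you flagged is handled automatically: $\delta = \delta(\ve, s)$ depends only on $\ve$ and the choice of $s$ between $\dim_H(Y)$ and $\dim_H(X)$, so it can be fixed before $X_1$, $\mu$, and $A$ are chosen.
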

\begin{proof}
Let $\sigma < \dim_H(Y) < \dim_H(X)$, and let $\ve > 0$ be sufficiently small. Let $s$ be any real such that $\dim_H(Y) < s < \dim_H(X)$. Let $\delta = \delta(\ve, s)$ be the real of Theorem \ref{thm:mainThmEffDim2}. Let $X_1\subseteq X$ and $Y_1\subseteq Y$ be compact subsets such that 
\begin{center}
    $\dim_H(X_1) > \max\{\dim_P(X) - \delta,s\}$ 
\end{center}
and $\dim_H(Y_1) > \sigma$. To simplify notation, we denote these compact subsets $X$ and $Y$, respectively. In addition, let $\mu, \nu \in \mathcal{P}(\R^2)$ whose support is $X$ and $Y$. We assume $\mu$ and $\nu$ satisfy the $s_X$-dimensional and $s_Y$-dimiensional, respectively, Frostman condition where $s_X > \max\{\dim_P(X) - \delta,s\}$ and $s_Y > \sigma$.

Let $A$ the join of the following oracles: an oracle relative to which $Y$ is effectively compact, a packing oracle for $X$,  and an oracle relative to which $\nu$ is computable. The proof of Lemma \ref{lem:C1-C3} gives the existence of some $x$ and $y$ satisfying conditions (C1)-(C3) relative to $A$. Lemma \ref{lem:C1toC3ImplyC4} proves $x$ and $y$ also satisfy condition (C4). By Theorem \ref{thm:mainThmEffDim2}
\begin{equation*}
\dim^{A,x}(\vert x-y\vert) \geq \sigma - \frac{10\ve}{\sigma}
\end{equation*}
Since $Y$ is effectively compact relative to $A$, we see that
\begin{center}
    $\dim_H(\Delta_x(Y)) \geq \sigma - \frac{10\ve}{\sigma}$.
\end{center}
Since $\ve$ can be arbitrarily small, we have
\begin{center}
    $\sup\limits_{x\in X}\dim_H(\Delta_x(Y)) \geq \sigma$.
\end{center}

\noindent Finally, since $\sigma$ can be arbitrarily close to $\dim_H(Y)$, the conclusion follows. 
\end{proof}

Combining this result with Theorem \ref{thm:priorUniversalSets} immediately gives the follows corollary. 

\begin{cor}
    Let $X\subseteq\R^2$ be Borel and weakly regular with $\dim_H(E) = \dim_P(E) > 1$. Then $X$ is weakly universal for pinned distances for the class of Borel sets $Y\subseteq \R^2$. 
\end{cor}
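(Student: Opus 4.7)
The plan is to split into two cases depending on whether $\dim_H(Y) \leq 1$ or $\dim_H(Y) > 1$, and in each regime invoke a result already at hand. Together these will cover every Borel $Y\subseteq\R^2$.

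First I would treat the case $\dim_H(Y) \leq 1$. Since $\dim_H(X) > 1$ by hypothesis, we have $\dim_H(Y) < \dim_H(X)$, so the preceding theorem on weakly regular universal sets applies directly. It yields $\sup_{x \in X} \dim_H(\Delta_x Y) \geq \dim_H(Y)$, and because $y \mapsto |x - y|$ is $1$-Lipschitz with image in $\R$, the reverse inequality $\dim_H(\Delta_x Y) \leq \min\{1, \dim_H(Y)\} = \dim_H(Y)$ is automatic. Thus the supremum equals $\dim_H(Y) = \min\{1, \dim_H(Y)\}$ in this regime.

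Next I would handle the case $\dim_H(Y) > 1$. Here I would fix any reals $d_x \in (1, \dim_H(X))$ and $d_y \in (1, \dim_H(Y))$; since a Borel set is analytic, Theorem \ref{thm:priorUniversalSets} applies with these choices. It produces a set $F \subseteq X$ with $\dim_H(X \setminus F) < \dim_H(X)$ such that $\dim_H(\Delta_x Y) = 1$ for every $x \in F$. In particular $F$ is nonempty, so $\sup_{x \in X} \dim_H(\Delta_x Y) = 1 = \min\{1, \dim_H(Y)\}$, again matching the required value.

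No real obstacle is expected here, since the two regimes are disjoint and jointly exhaust the possibilities; the corollary is essentially a bookkeeping combination of the preceding theorem (covering $\dim_H(Y) \leq 1$) and the authors' earlier Theorem \ref{thm:priorUniversalSets} (covering $\dim_H(Y) > 1$), each of which already delivers exactly the quantity $\min\{1, \dim_H(Y)\}$ demanded by the definition of weak universality. The only minor point to verify is the trivial Lipschitz upper bound in the first case, which ensures one obtains an equality rather than merely a lower bound.
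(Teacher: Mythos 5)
Your proof is correct and takes essentially the same route as the paper: the paper states the corollary as an immediate consequence of combining the preceding weakly regular universal set theorem (covering $\dim_H(Y) < \dim_H(X)$, hence all $Y$ with $\dim_H(Y) \leq 1$ once $\dim_H(X) > 1$) with Theorem~\ref{thm:priorUniversalSets} (covering $\dim_H(Y) > 1$). You have simply spelled out the case split and the trivial Lipschitz upper bound needed to turn the lower bounds into the stated equality.
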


\subsection{AD-regular universal sets}

We begin by defining AD-regular sets and proving several pointwise results for them. Let $E \subseteq \R^n$ be a compact set, $C\geq 1$ and $\alpha \geq 0$. We say that $E$ is $(\alpha, C)$-AD regular if
\begin{equation}
    C^{-1}r^\alpha \leq \mathcal{H}^\alpha(E \cap B(x, r))\leq Cr^\alpha, \;\;\; x \in A, 0 < r < \text{diam}(A)
\end{equation}
We say that $E$ is $\alpha$-AD regular if $E$ is $(\alpha, C)$-AD regular, for some constant $C \geq 1$.

The pointwise analog of AD-regular sets is defined in the natural way. That is, a point $x\in \R^n$ is $(\alpha, C)$-AD regular with respect to an oracle $A\subseteq \N$ if
\begin{equation}
     \alpha r - C\log r \leq K_r^A(x)\leq  \alpha r + C\log r 
\end{equation}
for every $r\in \N$.

Let $E\subseteq \R^n$ be a compact set which is $\alpha$-AD regular. We say that an oracle $A \subseteq\N$ is an AD-regular Hausdorff oracle, or AD-regular oracle, for $E$ if 
\begin{equation}
     \alpha r - O(\log r) \leq K_r^A(x)\leq  \alpha r + O(\log r)  \; \forall r \in \N
\end{equation}
for $\mathcal{H}^\alpha$-a.e. $x\in E$. Note that we immediately have that any AD-regular oracle for $E$ is a Hausdorff oracle for $E$.

\begin{lem}\label{lem:ADRegularOraclesExist}
Let $E\subseteq\mathbb{R}^n$ be an $\alpha$-AD regular set. Then there is an AD-regular oracle $A$ for $E$. Moreover, for any oracle $B\subseteq\mathbb{N}$, the join $(A,B)$ of $A$ and $B$ is also an AD-regular oracle for $E$.
\end{lem}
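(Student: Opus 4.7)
The idea is to build $A$ so that two independent mechanisms are in place: a uniform upper bound $K_r^A(x)\leq\alpha r+O(\log r)$ on every $x\in E$ coming from an efficient cover of $E$ at each scale, and a Frostman-style lower bound holding for $\mathcal{H}^\alpha$-a.e.\ $x\in E$ via a Borel--Cantelli counting argument. The crucial feature of the second mechanism is that it is insensitive to adding extra oracle information, which is what will make the \emph{moreover} clause essentially free.

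\textbf{Upper bound via covers.} First I would use compactness together with the AD-regularity of $E$: for each $r\in\N$ pick a maximal $2^{-r}$-separated subset of $E$; a standard volume argument using the lower bound $\mathcal{H}^\alpha(E\cap B(x,r))\geq C^{-1}r^\alpha$ gives that this set has cardinality $\lesssim 2^{\alpha r}$, while its $2^{-r}$-balls cover $E$. Replacing each point by a nearby rational centre yields a list $N_r\subseteq\Q^n$ with $|N_r|\lesssim 2^{\alpha r}$ whose $O(2^{-r})$-balls still cover $E$. I let $A$ be a canonical recursive encoding of the sequence $(N_r)_{r\in\N}$. Then for any $x\in E$ the index of a ball in $N_r$ that contains $x$, together with (self-delimiting) $r$, is a program of length $\alpha r+O(\log r)$ producing a $2^{-r}$-rational approximation of $x$; so $K_r^A(x)\leq\alpha r+O(\log r)$ uniformly in $x\in E$. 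This bound is automatically inherited by $K_r^{A,B}(x)$ for every oracle $B$, since extra oracles never increase complexity.

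\textbf{Lower bound and moreover.} For the lower bound I would run the standard incompressibility argument relative to the Frostman measure $\mu:=\mathcal{H}^\alpha\uhr E$, which satisfies $\mu(B(y,r))\lesssim r^\alpha$ directly from the upper AD-regular bound. Fix any oracle $B\subseteq\N$; for each $c\in\N$ set $S_r^c:=\{x\in E:K_r^{A,B}(x)<\alpha r-c\log r\}$. The number of $(A,B)$-programs of that length is at most $2^{\alpha r}r^{-c}$, each outputting at most one $2^{-r}$-rational, so $S_r^c$ is contained in a union of that many balls of radius $2^{-r}$, each of $\mu$-mass $\lesssim 2^{-\alpha r}$; hence $\mu(S_r^c)\lesssim r^{-c}$. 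For $c\geq 2$, Borel--Cantelli gives $\mu(\limsup_r S_r^c)=0$, so $\mu$-a.e.\ $x\in E$ satisfies $K_r^{A,B}(x)\geq\alpha r-2\log r$ for all $r\geq R_0(x)$; absorbing the finitely many small precisions into a constant depending on $x$ gives $K_r^{A,B}(x)\geq\alpha r-O(\log r)$ for every $r\in\N$. Applying this with $B=\emptyset$ gives the first assertion, and applying it with a general $B$ gives the moreover clause. The main (minor) obstacle is purely administrative: making sure the canonical cover $N_r$ can be described recursively from $A$ and that the self-delimiting $O(\log r)$ bookkeeping does not encroach on the $\alpha r$ term. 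Once those are handled, the counting is entirely standard.
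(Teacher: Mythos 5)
Your proof is correct, and the overall architecture matches the paper's: an upper bound $K_r^A(x)\leq\alpha r+O(\log r)$ for \emph{every} $x\in E$ coming from $\lesssim 2^{\alpha r}$ rational balls of radius $2^{-r}$ covering $E$ (a direct consequence of the lower AD-regularity bound), plus an $\mathcal{H}^\alpha$-a.e.\ lower bound $K_r^A(x)\geq\alpha r-O(\log r)$ coming from incompressibility against the Frostman measure $\mu=\mathcal{H}^\alpha\uhr E$.

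The details differ in two useful ways. First, your oracle $A$ encodes only an explicit sequence of near-optimal covers $(N_r)$, whereas the paper's $A$ is chosen to make $E$ effectively compact and to encode the values $\{\mathcal{H}^\alpha(E\cap Q): Q\in\mathcal{Q}\}$, so that $\mu$ is a computable outer measure relative to $A$; the paper then cites a black-box result (\cite{Stull22c}, Lemma 36) to get $\mu$-a.e.\ incompressibility. You instead run the Borel--Cantelli counting by hand: there are at most $2^{\alpha r}r^{-c}$ candidate programs, hence $S_r^c$ is covered by that many $2^{-r}$-balls each of $\mu$-mass $\lesssim 2^{-\alpha r}$, so $\mu(S_r^c)\lesssim r^{-c}$ and $\sum_r\mu(S_r^c)<\infty$ for $c\geq 2$. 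This argument never uses computability of $\mu$ relative to $(A,B)$ — only the Frostman bound, which is a property of $E$ alone — so your construction is lighter and self-contained. Second, you make the ``moreover'' clause explicit by running the counting relative to $(A,B)$ for arbitrary $B$, observing that the bound $\mu(S_r^c)\lesssim r^{-c}$ and the cover-based upper bound are both indifferent to extra oracle information; the paper leaves this implicit. The only small point to be careful about, which you already flag, is absorbing the finitely many precisions $r<R_0(x)$ into the constant in $O(\log r)$, and that for a $2^{-r}$-ball not centered in $E$ one should pass to a concentric $2^{1-r}$-ball around a nearby point of $E$ before invoking AD-regularity to bound its $\mu$-mass.
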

\begin{proof}
    Let $A\subseteq \N$ be an oracle relative to which $E$ is effectively compact, which encodes the real $\alpha$, and which encodes the set of reals
    \begin{center}
        $\{\mathcal{H}^\alpha(E \cap Q) \mid Q \in \mathcal{Q}\}$,
    \end{center}
    where $\mathcal{Q}$ is the set of balls with rational centers and rational radii.

    Since the restriction $\mu :=\mathcal{H}^\alpha\vert_E$ is a computable outer measure relative to $A$, and $0 < \mu(E) < \infty$, by \cite{Stull22c}[Lemma 36], the set 
    \begin{center}
        $\{x \in E \mid (\exists^\infty r)\;K^{A}_r(x) < \alpha r - O(\log r)\}$
    \end{center}
    has $\mu$-measure zero. 
    
    Since $E$ is $(\alpha, C)$-AD regular, there is a constant $C^\prime > 1$ such that, for every $r\in \N$ there is a cover of $E$ by rational balls of radius $2^{-r}$ of cardinality at most $C^\prime 2^{\alpha r}$. Moreover, since $E$ is computably compact relative to $A$, this cover is computable relative to $A$. Therefore, for every $x \in E$ and every $r \in \N$, 
    \begin{center}
        $K^A_r(x) \leq \alpha r + O(\log r)$,
    \end{center}
    and the proof is complete.
\end{proof}

\begin{thm}\label{thm:mainThmEffDim3}
Suppose that $x, y\in\R^2$, $e = \frac{y-x}{\vert y-x\vert}$, $\sigma \in \Q \cap (0,1)$, $d_x > \sigma$, $\ve > 0$, $c\in\mathbb{N}$ and $A\subseteq\N$  satisfy the following.
\begin{itemize}
\item[\textup{(C1)}] $\dim^A(x)=d_x>1,$ $ 1\geq \dim^A(y) > \sigma$
\item[\textup{(C2)}] $K^{A,x}_r(e) > \sigma r - O(\log r)$ for all sufficiently large $r$.
\item[\textup{(C3)}] $K^{A, x}_r(y) \geq K^{A}_r(y) - O(\log  r)$ for all sufficiently large $r$. 
\item[\textup{(C4)}] $K^{A}_{t,r}(e\mid y) > \sigma t - O(\log  r)$ for all sufficiently large $r$ and $t \leq r$.
\item[\textup{(C5)}] $d_x r- c \log r\leq K_r^A(x) \leq d_x r+ c \log r$
\end{itemize}
Then
\begin{equation*}
\dim^{x,A}(\vert x-y\vert) \geq \sigma .
\end{equation*}
\end{thm}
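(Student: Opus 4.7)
The proof will run in parallel to that of Theorem \ref{thm:mainThmEffDim2}, with the AD-regularity hypothesis (C5) playing the role of the weak regularity bound $\Dim^A(x) - \dim^A(x) < \delta(\varepsilon, \sigma^\prime)$ and allowing us to eliminate the $10\varepsilon/\sigma$ loss in the conclusion. Fixing a rational $\eta < \sigma$, let $G = D^A(r, y, \eta)$ from Lemma \ref{lem:oracles}. As before, the goal is to verify the enumerative hypothesis of Lemma \ref{lem:distanceEnumeration}: for every $z \in B_{2^{-t}}(y)$ with $\vert x - z\vert = \vert x - y\vert$ and $s := -\log\vert y - z\vert$,
\begin{equation*}
K^{A, G}_r(z) \geq \eta r + \min\{\varepsilon r,\ \sigma(r - s) - \varepsilon r\}.
\end{equation*}
The case $s \geq r/2 - \log r$ follows exactly as in Theorem \ref{thm:mainThmEffDim2} from Lemma \ref{lem:intersectionLemmaProjections} and (C2). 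For $s < r/2 - \log r$, Lemma \ref{lem:lowerBoundOtherPointDistance} gives the decomposition
\begin{equation*}
K^{A,G}_r(z) \geq K^{A,G}_s(y) + K^{A,G}_{r-s, r}(x \mid y) - K^{A,G}_{r-s}(x \mid p_{e^\prime} x, e^\prime) - O(\log r).
\end{equation*}

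The key new step is a sharp projection bound for AD-regular points: condition (C5) forces $K_r^A(x)$ to hug its linear approximation $d_x r$ to within $O(\log r)$, so in the red--green--blue partition of Lemma \ref{lem:redGreenBluePartitionProjections} every $\sigma$-green block has length only $O(\log r)$. Re-running the argument of Proposition \ref{prop:semiRegularProjection} with the constant slack $\varepsilon^\prime r$ replaced throughout by $O(\log r)$ then yields
\begin{equation*}
K^A_{r-s}(x \mid p_{e^\prime} x, e^\prime) \leq K^A_{r-s}(x) - \sigma(r-s) + O(\log r),
\end{equation*}
valid under (C1), (C5), and the direction bound on $e^\prime$ provided by (C4). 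Plugging this into the decomposition, and using (C3), symmetry of information, and the properties of $G$, yields the required enumerative lower bound in both sub-cases ($K^A_s(y) \geq \eta r$ and $K^A_s(y) < \eta r$), exactly as in the proof of Theorem \ref{thm:mainThmEffDim2}. Applying Lemma \ref{lem:distanceEnumeration} gives $K^{A,x}_r(\vert x - y\vert) \geq \eta r - O(\log r)$, and since $\eta$ may be chosen arbitrarily close to $\sigma$, we conclude $\dim^{x, A}(\vert x - y\vert) \geq \sigma$.

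The principal obstacle is upgrading Proposition \ref{prop:semiRegularProjection} from an $\varepsilon r$ error to an $O(\log r)$ error under (C5). In the semi-regular version, the $\varepsilon r$ slack is inherited from a constant tolerance $\varepsilon^\prime$ used in the definition of weak regularity; under AD-regularity, the complexity function's deviation from linearity is already $O(\log r)$, so, provided we show that the partition contains no substantial $\sigma$-green blocks, every slack shrinks accordingly. The degeneration of $\sigma$-green intervals will follow from the same computation as in Proposition \ref{prop:semiRegularProjection}, with $\varepsilon^\prime$ set to the $O(\log r / r)$ scale dictated by (C5); since $d_x > 1 > \sigma$, the required strict inequality $d_x - \sigma > 2\varepsilon^\prime C$ is automatic for all sufficiently large $r$.
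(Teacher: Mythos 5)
Your proposal takes a genuinely different — and considerably more laborious — route than the paper. The paper's proof is a one-liner: condition (C5) immediately gives $\Dim^A(x) - \dim^A(x) = 0$, which is strictly less than $\delta(\ve, \sigma')$ for \emph{every} $\ve > 0$; so Theorem~\ref{thm:mainThmEffDim2} applies for every $\ve$, yielding $\dim^{A,x}(|x-y|) \geq \sigma - 10\ve/\sigma$, and taking $\ve \to 0$ finishes. The whole point of stating Theorem~\ref{thm:mainThmEffDim2} with a $\delta$ depending on $\ve$ is precisely so that the exactly regular case falls out for free. You instead re-run the enumeration argument from scratch and try to sharpen Proposition~\ref{prop:semiRegularProjection} from an $\ve r$ error to an $O(\log r)$ error. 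Since you send $\eta \to \sigma$ at the end anyway, the sharper projection bound buys you nothing over the paper's route; it only buys a pointwise statement with additive $O(\log r)$ error rather than a limiting $\ve \to 0$ argument, which the theorem does not require.

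Two points in your write-up deserve more care. First, the claimed upgrade of the projection bound to $O(\log r)$ error is plausible but not established: the $\ve r$ slack originates in Lemma~\ref{lem:enumerationLemmaProjections}, whose conclusion carries both a $\frac{3\ve}{\sigma}r$ term and a $K(\ve,\eta)$ term, and whose hypothesis~(2) must be verified with a margin beating an unspecified $O(\log r)$ constant. Making the error $O(\log r)$ requires letting $\ve$ scale like $c\log r/r$ for a sufficiently large constant $c$ while keeping $K(\ve,\eta) = O(\log r)$ and still clearing the hidden constants in the chain $(\eta+2\ve)r - O(\log r) \geq (\eta+\ve)r$; you assert this works but do not show it, and a fixed-$\ve$ argument (as in the paper) would sidestep the issue entirely. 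Second, you attribute the direction bound on $e'$ to (C4). In the paper's analogous steps (the proofs of Lemma~\ref{lem:goodtealgrowth} and Theorem~\ref{thm:mainThmEffDim2}) the bound needed by the projection results is $K^{A,x}_s(e') \geq \sigma s - O(\log s)$, which is verified using (C2), not (C4) — condition (C4) is a conditional bound given $y$, used to control the yellow-interval contributions on the distance side, not to feed the projection theorem.
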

\begin{proof}
   Note that condition (C5) immediately implies that $\Dim^A(x) - \dim^A(x) = 0$. Therefore, by Theorem \ref{thm:mainThmEffDim2}, for every $\ve > 0$,
   \begin{equation}
       \dim^{A,x}(\vert x - y\vert ) \geq \sigma - \frac{10\ve}{\sigma},
   \end{equation}
   and the conclusion follows.
\end{proof}

\begin{thm}
    Any compact AD-regular $X\subseteq\mathbb{R}^2$ of dimension more than 1 is universal for pinned distances for the class of Borel sets.
\end{thm}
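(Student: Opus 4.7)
The plan splits into two cases according to the size of $\dim_H(Y)$. When $\dim_H(Y) > 1$, AD-regularity of $X$ forces $\dim_H(X) = \dim_P(X) > 1$, so Theorem \ref{thm:priorUniversalSets}, applied to any compact subset $Y_0 \subseteq Y$ with $1 < \dim_H(Y_0)$, gives a nonempty $F \subseteq X$ such that $\dim_H(\Delta_x Y) \geq \dim_H(\Delta_x Y_0) = 1 = \min\{\dim_H(Y), 1\}$ for every $x \in F$. The substantive case is $\dim_H(Y) \leq 1$, handled via Theorem \ref{thm:mainThmEffDim3}. For a rational $\sigma < \dim_H(Y)$, I would choose a compact $Y_1 \subseteq Y$ carrying a Frostman measure $\nu$ of exponent $s$ with $\sigma < s < \dim_H(Y)$, and let $\mu$ be the normalized restriction of $\mathcal{H}^{\dim_H(X)}$ to $X$, which by AD-regularity is a $\dim_H(X)$-Frostman probability measure. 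Because $\dim_H(X) > 1$, every line meets $X$ in a set of smaller dimension, so $\mu(\ell) = 0$ and in particular $\mu(\ell)\nu(\ell) < 1$. Let $A$ be the join of an AD-regular oracle for $X$ supplied by Lemma \ref{lem:ADRegularOraclesExist}, an oracle witnessing effective compactness of $Y_1$, and an oracle making $\nu$ computable.

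Corollary \ref{cor:OrpShmWan} then gives that $(\mu, \nu)$ has $\sigma$-thin tubes. Replaying the proof of Lemma \ref{lem:C1-C3} relative to $A$ produces a Borel set $X_2 \subseteq X$ of positive $\mu$-measure such that every $x \in X_2$ admits a witness $y \in Y_1$ satisfying (C1)-(C3); (C4) follows automatically from Lemma \ref{lem:C1toC3ImplyC4}. Since $A$ is AD-regular for $X$, the $\mu$-null set of non-AD-regular points can be excised from $X_2$ while preserving positive measure, and any remaining $x$ satisfies (C5) with $d_x = \dim_H(X)$. Theorem \ref{thm:mainThmEffDim3} then yields $\dim^{A, x}(|x-y|) \geq \sigma$, and because $\Delta_x Y_1$ is effectively compact relative to $(A, x)$, the point-to-set principle promotes this to $\dim_H(\Delta_x Y) \geq \dim_H(\Delta_x Y_1) \geq \sigma$.

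The main obstacle is turning this ``every $\sigma < \dim_H(Y)$ works'' statement into the existence of a \emph{single} $x$ attaining $\dim_H(\Delta_x Y) = \dim_H(Y)$. My plan is to fix a rational sequence $\sigma_n \uparrow \dim_H(Y)$, assemble one master oracle $A$ jointly encoding the data $(Y_{1,n}, \nu_n)$ for every $n$, and argue that the sets $F_n := \{x \in X : x \text{ is AD-regular w.r.t.~} A \text{ and } \dim_H(\Delta_x Y) \geq \sigma_n\}$ each have $\mu$-measure bounded below by a constant independent of $n$. For AD-regular $\mu$ of dimension strictly greater than one, the thin tubes constant of Corollary \ref{cor:OrpShmWan} can be chosen uniform in $\sigma_n$ once the Frostman exponent of $\mu$ is fixed above one; if a direct uniformization is not available, one can iterate the argument on $X \setminus F_n$, exploiting that restrictions of AD-regular measures to positive-measure subsets remain sufficiently regular to rerun the Orponen--Shmerkin--Wang machinery and upgrade positive measure to full measure. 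The nested family $F_1 \supseteq F_2 \supseteq \cdots$ then has positive-measure intersection, and any $x \in \bigcap_n F_n$ satisfies $\dim_H(\Delta_x Y) \geq \sup_n \sigma_n = \dim_H(Y) = \min\{\dim_H(Y), 1\}$, completing the proof.
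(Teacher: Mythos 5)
Your overall architecture matches the paper's: split on whether $\dim_H(Y)$ exceeds $1$, handle the large case via Theorem~\ref{thm:priorUniversalSets}, and in the small case assemble a master oracle, exhaust $Y$ by compact pieces, verify (C1)--(C5), and apply Theorem~\ref{thm:mainThmEffDim3} followed by the point-to-set principle. You also correctly identify the genuine obstacle, which is producing one pin $x$ that works simultaneously for every $\sigma_n$. That is exactly where the proof needs care, and it is where your proposal has a gap.

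Your patch relies on Corollary~\ref{cor:OrpShmWan}, which is Corollary~2.21 of Orponen--Shmerkin--Wang and only provides $(\sigma, K, c)$-thin tubes for \emph{some} unspecified $c>0$. You assert that when $\mu$ is AD-regular of dimension $>1$ the constant $c$ ``can be chosen uniform in $\sigma_n$,'' but that is not something the cited corollary gives you, and as stated it is speculation. The paper sidesteps this by invoking a \emph{different} result, Corollary~2.18 of the same paper, which --- after using Lemma~2.1 of Orponen's \emph{Dimension of smooth} paper to make $\nu_i(T)$ small on thin tubes --- produces $(\sigma_i', K_i, 1-2^{-2i})$-thin tubes with the measure constant explicitly tending to $1$. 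That quantitative control is exactly what makes the countable intersection over $i$ nonempty (together with the full-measure set $X_1$ of points that are AD-regular relative to the master oracle). Without a version of the thin-tubes theorem whose good-set measure can be pushed arbitrarily close to $1$, the intersection $\bigcap_n F_n$ is not guaranteed to be nonempty.

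Your fallback --- iterate/exhaust on $X\setminus F_n$ to upgrade positive $\mu$-measure of $F_n$ to full measure --- is a reasonable instinct and probably can be made to work, since the normalized restriction of $\mathcal{H}^{d_X}\restriction X$ to a positive-measure Borel subset is still Frostman of exponent $d_X>1$, and since $F_n$ is defined via classical Hausdorff dimension and so does not depend on the oracle chosen for the rerun. But you have not carried it out: you would need to justify Borel measurability of $F_n$, be explicit about how the oracle is re-assembled at each stage, and argue that the exhaustion actually yields $\mu(F_n)=1$ rather than merely a larger positive-measure set. The cleaner route, and the one the paper takes, is to use the stronger quantitative thin-tubes input from the start so that no exhaustion is needed.
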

\begin{proof}
Assume $X\subseteq\mathbb{R}^2$ is compact and AD regular with dimension $d_x>1$ and regularity oracle $A_X$. Let $\mu$ be the normalized probability on $X$. 

Let $Y\subseteq\mathbb{R}^2$ Borel of Hausdorff dimension at most $1$ be given, and define $Y_i$ to be a sequence of compact subsets of $Y$ satisfying $0<\mathcal{H}^{\dim_H(Y)-\frac{1}{i}}(Y_i)<\infty$. We note the probability measures on $Y_i$ (the normalized $\mathcal{H}$ measure) by $\nu_i$. Let $A_i$ be an effective compactness oracle, encoding $\nu_i$, for $Y_i$. Define $A$ to be the join of $A_X$ and $A_1, A_2,...$ and observe that $A$ remains an AD-regularity oracle for $X$ and is an effective compactness oracle for all $Y_i$. Define 
\begin{equation*}
X_1=\{x\in X: (\exists c)\, (\forall r) \text{ } d_x r - c \log r\leq K_r^A(x)\leq d_x r + c \log r\}
\end{equation*}
Note that $\mu(X_1) = 1$. 

For each $i$, let $\sigma_i = \dim_H(Y)-\frac{2}{i}$. Let $i \in \N$. Note that, if $\nu_i(\ell) \neq 0$, for any line $\ell \subseteq \R^2$, then $\dim_H(\Delta_x Y_i) = \dim_H(Y_i)$, for almost every $x \in X$. Without loss of generality, we assume that $\nu_i(\ell) = 0$ for every line $\ell \subseteq\R^2$. By Lemma 2.1 of \cite{Orponen19DimSmooth}, we have that, for any $\ve > 0$, there is a $\delta > 0$ such that $\nu_i(T) \leq \ve$ for all $\delta$-tubes $T$. 

We may therefore apply Corollary 2.18 of \cite{OrpShmWan22} to conclude that, for some $K_i$, $(\mu, \nu_i)$ has $(\sigma_i^\prime, K_i, 1- 2^{-2i})$-thin tubes, for every $\sigma_i < \sigma^\prime_i < \dim_H(Y_i)$. In particular, there is an $x \in X_1$ such that for every $i\in\N$, there is a point $y \in Y_i$ such that $x,y$ and $e := \frac{y-x}{\vert x - y\vert}$ satisfy (C1)-(C3) and (C5). Using Lemma \ref{lem:C1toC3ImplyC4}, we see that they also satisfy (C5). We may therefore apply Theorem \ref{thm:mainThmEffDim3} and conclude that
\begin{center}
    $\dim^{A,x}(\vert x - y\vert) \geq \dim_H(Y)-\frac{2}{i}$.
\end{center}
Since $Y_i$ is effectively compact relative to $A$, this implies that
\begin{equation}
    \dim_H(\Delta_x Y_i) \geq \dim_H(Y)-\frac{2}{i},
\end{equation}
and the conclusion follows.
\end{proof}

\bibliographystyle{amsplain}
\bibliography{pdss}

\end{document}